\newcommand{\lyxmathsym}[1]{\ifmmode\begingroup\def\b@ld{bold}
  \text{\ifx\math@version\b@ld\bfseries\fi#1}\endgroup\else#1\fi}
\numberwithin{equation}{section}
\numberwithin{figure}{section}
\theoremstyle{plain}
\newtheorem{thm}{\protect\theoremname}
\providecommand{\theoremname}{Theorem}
\numberwithin{equation}{section}
\numberwithin{figure}{section}
\theoremstyle{plain}
\theoremstyle{definition}
\newtheorem{example}[thm]{\protect\examplename}
\theoremstyle{plain}
\newtheorem{prop}[thm]{\protect\propositionname}
\theoremstyle{remark}
\newtheorem{rem}[thm]{\protect\remarkname}
\providecommand{\examplename}{Example}
\providecommand{\propositionname}{Proposition}
\providecommand{\remarkname}{Remark}
\providecommand{\theoremname}{Theorem}
\begin{document}
\title{Higher Order Coercive Inequalities on Nilpotent Lie Groups}
 
\author{Esther Bou Dagher}
\address{Esther Bou Dagher\endgraf
CEREMADE, Universit{\'e} Paris Dauphine
}
\email{esther.bou-dagher@dauphine.psl.eu}
%
\author{Yifu Wang}
\address{Yifu Wang 	\endgraf 
School of Computing and Mathematical Sciences
\endgraf
University of Leicester}
\email{yw523@leicester.ac.uk }
%
\author{Boguslaw Zegarlinski}
\address{Boguslaw Zegarlinski\endgraf
          IMPAN 
		\endgraf
	 {\it E-mail address} { bzegarlinski@impan.pl}
	}
    \begin{abstract}
        In this work, we study regularity problems of certain Markov generators, which naturally appear in the context of analysis in functional spaces associated to probability measures on nilpotent Lie groups.
    \end{abstract}

\maketitle
 {\hypersetup{linkcolor=black}
\tableofcontents}

\section{Introduction} \label{sec1}
In this paper, in the context of 
$\mathbb{L}_p(\mu)$ and Orlicz spaces associated to a probability measure $\mu$ on nilpotent Lie groups, we discuss the problems of coercive inequalities of higher order as well as the relation of norms defined in terms of (sub)gradient and diffusion generators or higher order differential operators. In particular, this includes coercive inequalities of higher-order such as Poincar\'e-type and iterated Logarithmic Sobolev type inequalities,
which in Euclidean spaces were studied for example in \cite{WZ} and  \cite{Ro}, \cite{A}, respectively, see also references therein. We recall that it was shown already in \cite{Ro}, that given the following Logarithmic Sobolev inequality
\[
\int f^2\left|\log\frac{f^2}{\mu f^2}\right|^\beta  e^{-U}d\lambda
\leq  c\int |\nabla f|^2 e^{-U}d\lambda +
d \int |f|^2 e^{-U}d\lambda,
\]
with some constants $c,d,\beta\in(0,\infty)$ and a probability measure $d\mu\equiv e^{-U}d\lambda$ on $\mathbb{R}^n$,
using general arguments, one can show the following bound involving higher-order derivatives of the function 
\[
\int f^2\left|\log\frac{f^2}{\mu f^2}\right|^{m\beta}  e^{-U}d\lambda
\leq  C\sum_{|\alpha|\leq m} \int |\nabla f|^2 e^{-U}d\lambda +
D \int |f|^2 e^{-U}d\lambda,
\]
with some constants $C,D\in(0,\infty)$ in dependent of the function $f$ for which the right hand side is well defined. 
We discuss, with some detail, the derivation of the above fact in Section \ref{sec2} and in Section \ref{sec3} also including the case of nilpotent Lie groups,
where the gradient is replaced by the sub-gradient 
provided by the fields generating the corresponding nilpotent Lie algebra. 
The proof of Logarithmic Sobolev inequalities in the context of analysis on nilpotent Lie groups is much more intricate than in the classical Euclidean context; for some examples, we refer to
\cite{HZ}, \cite{BDZ},\cite{BDZhQZ}.\\
In \cite{A}, an elegant other scheme was developed which allowed for a nice derivation of iterated non-tight Logarithmic Sobolev type inequalities in the Euclidean framework. It was based on the following assumption on the interaction $U$ defining a probability measure
\[ \sum_{|\alpha|=2} |\nabla^\alpha f|\leq C(1+|\nabla U|)^{2-\delta},\]
with some constants $C\in(0,\infty)$ and $\delta\geq 0$. With this assumption, to which we refer later on as Adams' regularity condition,
the author got an inductive way of proving bounds of the following form
\[
\int |f|^p |\nabla U|^md\mu \leq C\sum_{|\alpha|\leq m} |\nabla^\alpha f|^p ,
\]
which played an important part in \cite{A} to prove non-tight coercive Logarithmic Sobolev type inequalities of higher order. (A tightened version of these inequalities was proven in \cite{WZ}.)\\
In Section \ref{sec4}, we discuss some other inductive bounds which do not involve the second-order condition. Part of the motivation for that is that in the area of analysis on nilpotent Lie groups with the sub-gradient replacing the full gradient,  Adams' regularity condition fails.
It is demonstrated in sections \ref{sec6} and \ref{sec7} with more examples (also in Euclidean spaces) provided in Section \ref{sec10}. \\
In Section \ref{sec5}, we discuss the iteration of the Poincar\'e inequality. This problem was tackled in the Euclidean space in \cite{WZ}, where it was shown that
the Poincar\'e inequality for a probability measure 
and full gradient in the Euclidean space allows, for a given function $f,$ a construction of an explicit polynomial $P_{f,m}$,  (called there "a statistical polynomial") such that
\[
\int |f-P_f|^pd\mu \leq C \int |\nabla^m f|^pd\mu,
\]
with $m\in\mathbb{N}$ and some constant $C\in(0,\infty)$. In Section \ref{sec5}, we provide a generalization of this fact to the case of nilpotent Lie groups, where the full gradient is replaced by the sub-gradient.\\

Regularity estimates are discussed in Section \ref{sec8}.
In particular, first, assuming the measure $\mu$ is quasi-regular in the sense of \cite{AH-K}, we consider a representation of the nilpotent Lie algebra in $\mathbb{L}_2(\mu)$ and give there a proof of global regularity in the sense of a bound on a Sobolev-type norm by a suitable higher norm involving the representation $\{V_j\equiv X_j-\frac12X_jU\}_{j\leq k}$ of the sub-gradient $\{X_j\}_{j\leq k}$ of the Lie algebra.
In classical PDE theory, working with the Lebesgue measure and H\"ormander fields, one obtains such results by localization to a compact set (\cite{OR},\cite{Kohn}) with the constants in such a bound depending very much on the size of the set (for example in \cite{Ha}). 
In our case, the bound is global.\\
To get back to the original fields, in Subsection \ref{sec8.1}, further analysis is necessary.
It is based on some improvements (of the notoriously defective $U$-bounds) in the case of nilpotent Lie groups. It also involves Hardy inequality for probability measures, discussed in Section \ref{sec9}.
We close with a proof of a nice relation between harmonic functions and the Kaplan norm in the case of the Heisenberg group.

\section{Higher Order $\beta-$Logarithmic Sobolev Inequality}\label{sec2}
Let $\mathbb{G}$ be a Carnot group with $n$ generators $X_{1},X_{2},...,X_{n},$
and subgradient $\nabla_{\mathbb{}}:=(X_{1},X_{2},...,X_{n}).$

We extend the Logarithmic Sobolev inequality to higher order with
a proof inspired by J. Rosen \cite{Ro}.
\begin{thm}
Given the following Logarithmic-Sobolev inequality:
\begin{equation}
\int|f|^{2}\left|\log\left(\frac{|f|^{2}}{\mu|f|^{2}}\right)\right|^{\beta}d\mu\leq C\mu|\nabla f|^{2},\label{eq:1}
\end{equation}

for $\beta\in(0,1).$  Then, for all $m\in\mathbb{N},$
\begin{equation}
\int|f|^{2}\left|\log\left(\frac{|f|^{2}}{\mu|f|^{2}}\right)\right|^{\beta m}d\mu\leq C\sum_{|\alpha|=0}^{m}\int|\nabla^{\alpha}f|^{2}d\mu,\label{eq:2}
\end{equation}

where $\nabla^{\alpha}f=(X_{1}^{\alpha_{1}}X_{2}^{\alpha_{2}}...X_{n}^{\alpha_{n}}f)$
such that $|\alpha|={\displaystyle \sum_{i=1}^{n}\alpha_{i}.}$
\end{thm}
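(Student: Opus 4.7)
The plan is to argue by induction on $m \in \mathbb{N}$. The base case $m=1$ is precisely the hypothesis (\ref{eq:1}); the inductive step derives the $(m{+}1)$-th order bound assuming the $m$-th. For the inductive step I apply the base $\beta$-Logarithmic Sobolev inequality to the test function
$$
g \;:=\; |f|\,\psi_m(L(f)), \qquad \psi_m(t) := (1+t^2)^{m\beta/4},\qquad L(f) := \log\!\bigl(|f|^2/\mu|f|^2\bigr).
$$
The smoothed power $(1+t^2)^{1/2}$ in place of $|t|$ removes the singularity at $L(f)=0$ and makes the chain rule rigorous; one may assume $f>0$ by a standard regularization/density argument.

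A direct computation gives $g^2 = f^2(1+L(f)^2)^{m\beta/2}$ and
$$
L(g) \;=\; L(f) + \tfrac{m\beta}{2}\log\!\bigl(1+L(f)^2\bigr) + c_f,\qquad c_f := \log\mu f^2 - \log\mu g^2.
$$
Since $\log(1+L(f)^2)=o(|L(f)|)$ as $|L(f)|\to\infty$, on the set $\{|L(f)|\geq M\}$ we have $|L(g)|^\beta\geq(1-o_M(1))|L(f)|^\beta$, whereas on the complement the integrand is uniformly bounded by a multiple of $|f|^2$. The product rule for the sub-gradient similarly gives $|\nabla g|^2\leq C_m(1+L(f)^2)^{m\beta/2}|\nabla f|^2$. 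Substituting into (\ref{eq:1}) applied to $g$, splitting at $|L(f)|=M$ (with $M$ large), and absorbing the bounded pieces, I obtain
$$
\int |f|^2|L(f)|^{(m+1)\beta}\,d\mu \;\leq\; C_m\int|L(f)|^{m\beta}|\nabla f|^2\,d\mu \;+\; C'_m\int\bigl(|f|^2+|\nabla f|^2\bigr)\,d\mu.
$$

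To close the induction I must bound $\int|L(f)|^{m\beta}|\nabla f|^2\,d\mu$ by $\sum_{|\alpha|\leq m+1}\int|\nabla^\alpha f|^2\,d\mu$. I apply the inductive hypothesis at order $m$ to each $X_j f$ in place of $f$; since $|\nabla^\alpha X_j f|=|\nabla^{\alpha+e_j}f|$, the resulting right-hand side is $\sum_{|\alpha|\leq m+1}\int|\nabla^\alpha f|^2\,d\mu$, controlling $\sum_j\int|X_j f|^2|L(X_j f)|^{m\beta}\,d\mu$. What remains is the pointwise passage from $|L(X_j f)|^{m\beta}$ back to $|L(f)|^{m\beta}$, via a stratification of the domain by the ratio $|X_j f|/|f|$. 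The main obstacle lies precisely here: on the region where $|X_j f|\ll |f|$ the correction $|\log(f^2/|X_j f|^2)|^{m\beta}$ can be arbitrarily large, and to handle it I would exploit the strict sub-linearity $\beta<1$, which leaves slack of order $|L(f)|^{(m+1)\beta-\varepsilon}$ that can be reabsorbed on the left-hand side through a small-$\varepsilon$ Young-type inequality, completing the induction.
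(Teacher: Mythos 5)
Your overall architecture is close to the paper's: both proceed by induction, normalize $\mu|f|^2=1$, apply the base $\beta$-Log-Sobolev inequality to a test function of the form $g\approx |f|\,|L(f)|^{m\beta/2}$ (you use a smoothed power $(1+L(f)^2)^{m\beta/4}$, the paper uses $|\log f^2|^{\beta m/2}$ with a shift $f\mapsto f+e$ to avoid singularity), and both isolate a ``constant'' term (your $c_f$; the paper's term $B$, bounded via Jensen and the inductive hypothesis). Up to this point the two proofs are essentially the same modulo cosmetic differences in the regularization.

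The genuine gap is in the final step, where you must control
\[
\int |\nabla f|^2\,|L(f)|^{m\beta}\,d\mu
\]
by $\sum_{|\alpha|\le m+1}\int|\nabla^\alpha f|^2\,d\mu$. You propose to apply the inductive hypothesis to each $X_j f$ and then pass from $|L(X_jf)|^{m\beta}$ back to $|L(f)|^{m\beta}$ pointwise, by ``stratifying the domain by the ratio $|X_jf|/|f|$.'' This does not work: the two logarithms are not pointwise comparable, and on the set where $|X_jf|\ll |f|$ the correction term $|\log(f^2/|X_jf|^2)|$ is unbounded and cannot be reabsorbed; crucially it multiplies the (small but unknown) weight $|X_jf|^2$, so the slack from $\beta<1$ gives no handle on it. Your own sketch acknowledges the obstacle but the proposed Young-type reabsorption would require a pointwise estimate you do not have.

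The missing idea is a \emph{variational}, not pointwise, decoupling: after normalizing $U_j := |X_jf|^2/\mu|X_jf|^2$ and setting $V := |\log(f^2+e)|^{m\beta}$, one applies the entropy--exponential Young/Orlicz inequality
\[
\int U_j V\,d\mu \;\le\; c\left(1+\int U_j\,(\log U_j)^{m\beta}\,d\mu + \int e^{V^{1/(m\beta)}}\,d\mu\right).
\]
The last term equals $\int (f^2+e)\,d\mu$, which is finite by normalization. The middle term is exactly the entropy-type quantity in $X_jf$ to which the inductive hypothesis (order $m$) applies, and since $\nabla^\alpha X_jf = \nabla^{\alpha+e_j}f$ this gives the $\sum_{|\alpha|\le m+1}$ sum. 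This Orlicz-pairing step is the crux of the paper's argument for the ``term $A$'' and is what your proof is missing; without it the induction does not close.
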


\begin{proof}
The proof is by induction. We assume (\ref{eq:2}) is true for $m$,
so we need to show the statement is true for $m+1$: 
\begin{equation}
\int|f|^{2}\left|\log\left(\frac{|f|^{2}}{\mu|f|^{2}}\right)\right|^{\beta(m+1)}d\mu\leq C\sum_{|\alpha|=0}^{m+1}\int|\nabla^{\alpha}f|^{2}d\mu.\label{eq:3}
\end{equation}

By homogeneity, we can consider ${\displaystyle \int|f|^{2}d\mu=1}$
and prove 

\begin{equation}
\int|f|^{2}\left|\log|f|^{2}\right|^{\beta(m+1)}d\mu\leq C\left(1+\sum_{|\alpha|=0}^{m+1}\int|\nabla^{\alpha}f|^{2}d\mu\right).\label{eq:4}
\end{equation}

In fact, replace $f$ by ${\displaystyle \frac{f}{\left(\int|f|^{2}d\mu\right)^{\frac{1}{2}}}}$
in (\ref{eq:4}), to get

\[
\int\frac{|f|^{2}}{\left(\int|f|^{2}d\mu\right)}\left|\log\left(\frac{|f|^{2}}{\mu|f|^{2}}\right)\right|^{\beta(m+1)}d\mu\leq C\left(1+\sum_{|\alpha|=0}^{m+1}\int\left|\nabla^{\alpha}\left(\frac{f}{\left(\int|f|^{2}d\mu\right)^{\frac{1}{2}}}\right)\right|^{2}d\mu\right)
\]

\[
\frac{1}{\left(\int|f|^{2}d\mu\right)}\int|f|^{2}\left|\log\left(\frac{|f|^{2}}{\mu|f|^{2}}\right)\right|^{\beta(m+1)}d\mu\leq C\left(1+\frac{1}{\left(\int|f|^{2}d\mu\right)}\sum_{|\alpha|=0}^{m+1}\int\left|\nabla^{\alpha}f\right|^{2}d\mu\right),
\]

multiplying both sides by $\int|f|^{2}d\mu,$ we recover (\ref{eq:3}).
Hence, it suffices to prove (\ref{eq:4}). First consider $f^{2}\leq e,$
\[
\begin{array}{ll}
\int|f|^{2}\left|\log f^{2}\right|^{\beta(m+1)}d\mu & ={\displaystyle \int_{\{f^{2}<1\}}|f|^{2}\left|\log|f|^{2}\right|^{\beta(m+1)}d\mu+\int_{\{1\leq f^{2}\leq e\}}|f|^{2}\left|\log|f|^{2}\right|^{\beta(m+1)}d\mu}\\
 & {\displaystyle =\int_{\{f^{2}<1\}}|f|^{2}\left(\log\frac{1}{|f|^{2}}\right)^{\beta(m+1)}d\mu+\int_{\{1\leq f^{2}\leq e\}}|f|^{2}\left(\log|f|^{2}\right)^{\beta(m+1)}d\mu}\\
 & {\displaystyle {\displaystyle =\int_{\{f^{2}<1\}}|f|^{2}\left(\log\frac{1}{|f^{2}|}\right)^{\beta(m+1)}d\mu+\int_{\{1\leq f^{2}\leq e\}}|f|^{2}\left(\log|f^{2}|\right)^{\beta(m+1)}d\mu}}\\
 & \leq b\int_{\{f^{2}<1\}}1d\mu+\int_{\{1\leq f^{2}\leq e\}}ed\mu\\
 & \leq C.
\end{array},
\]
where we have used that for $|f|^{2}<1,$ ${\displaystyle \left(\log\frac{1}{|f^{2}|}\right)^{\beta(m+1)}\leq\frac{b}{|f|^{2}}}$
and $C=Z(b+e).$ Now we consider the case where $f^{2}\geq e:$
\[
\begin{array}{cl}
\int_{\{f^{2}\geq e\}}|f|^{2}\left|\log f^{2}\right|^{\beta(m+1)}d\mu & =\int_{\{f^{2}\geq e\}}f^{2}\left|\log(f^{2})\right|^{\beta m}\left|\log(f^{2})\right|^{\beta}d\mu\\
 & \leq\int_{\{f^{2}\geq e\}}f^{2}\left|\log(f^{2})\right|^{\beta m}\left|\log\left((f^{2})\left|\log(f^{2})\right|^{\beta m}\right)\right|^{\beta}d\mu,
\end{array}
\]
where the last step is true since $f^{2}\leq f^{2}\left|\log(f^{2})\right|^{\beta m}.$
Let $g=|f|\left|\log(f^{2})\right|^{\frac{\beta m}{2}},$ then 
\begin{equation}
\begin{array}{cl}
\int_{\{f^{2}\geq e\}}|f|^{2}\left|\log f^{2}\right|^{\beta(m+1)}d\mu & \leq \int_{\{f^{2}\geq e\}}g^{2}\left|\log\left(g^{2}\right)\right|^{\beta}d\mu\\
 & =\int_{\{f^{2}\geq e\}}g^{2}\left|\log\left({\displaystyle \frac{g^{2}}{\int g^{2}d\mu}}\right)+\log\left(\int g^{2}d\mu\right)\right|^{\beta}d\mu\\
 & \leq\int_{\{f^{2}\geq e\}}g^{2}\left|\log\left({\displaystyle \frac{g^{2}}{\int g^{2}d\mu}}\right)\right|^{\beta}d\mu+\int_{\{f^{2}\geq e\}}g^{2}\left|\log\left(\int g^{2}d\mu\right)\right|^{\beta}d\mu.
\end{array}\label{eq:5}
\end{equation}
Denote the first term on the right-hand side of (\ref{eq:5}) by ${\displaystyle A=}\int_{\{f^{2}\geq e\}}g^{2}\left|\log\left({\displaystyle \frac{g^{2}}{\int g^{2}d\mu}}\right)\right|^{\beta},$ and the second term by \textbf{${\displaystyle B=}\int_{\{f^{2}\geq e\}}g^{2}\left|\log\left(\int g^{2}d\mu\right)\right|^{\beta}d\mu.$ } We first replace $g$ in $B.$ By continuity since $\beta<1$, we
can find a positive number $\gamma$ such that ${\displaystyle 1<\gamma<\frac{1}{\beta}.}$
We get:
\[
\begin{array}{cl}
B & =\left(\int_{\{f^{2}\geq e\}}f^{2}\left|\log(f^{2})\right|^{\beta m}d\mu\right)\left|\log\left(\int f^{2}\left|\log(f^{2})\right|^{\beta m}d\mu\right)\right|^{\beta}\\
 & =\left(\int_{\{f^{2}\geq e\}}{\displaystyle f^{2}}{\displaystyle \left|\log(f^{2})\right|^{\frac{\beta m\gamma}{\gamma}}}d\mu\right)\left|\frac{\beta}{1-\beta\gamma}\log\left(\int{\displaystyle f^{2}}{\displaystyle \left|\log(f^{2})\right|^{\beta m}}d\mu\right)^{\frac{1-\beta\gamma}{\beta}}\right|^{\beta}\\
 & \leq{\displaystyle \left(\frac{\beta}{1-\beta\gamma}\right)^{\beta}}\left(\left(\int_{\{f^{2}\geq e\}}f^{2}d\mu\right)\int_{\{f^{2}\geq e\}}{\displaystyle \frac{f^{2}}{\left(\int_{\{f^{2}\geq e\}}f^{2}d\mu\right)}}{\displaystyle \left|\log(f^{2})\right|^{\frac{\beta m\gamma}{\gamma}}}d\mu\right)\left(\int f^{2}{\displaystyle \left|\log(f^{2})\right|^{\beta m}}d\mu\right)^{1-\beta\gamma}.
\end{array}
\]
Since $\beta\gamma<1,$ we have that $\phi(x)={\displaystyle x^{\beta\gamma}}$
is concave, so we apply Jensen's inequality:
\[
\begin{array}{cl}
B & \leq{\displaystyle \left(\frac{\beta}{1-\beta\gamma}\right)^{\beta}}\left(\int_{\{f^{2}\geq e\}}f^{2}d\mu\right)^{1-\beta\gamma}\left(\int_{\{f^{2}\geq e\}}{\displaystyle f^{2}}{\displaystyle \left|\log(f^{2})\right|^{\frac{m}{\gamma}}}d\mu\right)^{\beta\gamma}\left(\int{\displaystyle f^{2}}{\displaystyle \left|\log(f^{2})\right|^{\beta m}}d\mu\right)^{1-\beta\gamma}\\
 & {\displaystyle \leq\left(\frac{\beta}{1-\beta\gamma}\right)^{\beta}}\left(\int{\displaystyle f^{2}}{\displaystyle \left|\log(f^{2})\right|^{\frac{m}{\gamma}}}d\mu\right)^{\beta\gamma}\left(\int{\displaystyle f^{2}}{\displaystyle \left|\log(f^{2})\right|^{\beta m}}d\mu\right)^{1-\beta\gamma}
\end{array}
\]
Now, since ${\displaystyle \frac{1}{\gamma}<1}$ and $\beta<1,$ we use the inductive hypothesis (\ref{eq:2}), 
\begin{equation}
\begin{array}{cl}
B & \leq{\displaystyle \left(\frac{\beta}{1-\beta\gamma}\right)^{\beta}}\left(\int f^{2}{\displaystyle \left|\log(f^{2})\right|^{\frac{m}{\gamma}}}d\mu\right)^{\beta\gamma}\left(\int{\displaystyle f^{2}}{\displaystyle \left|\log(f^{2})\right|^{\beta m}}d\mu\right)^{1-\beta\gamma}\\
 & \leq{\displaystyle \left(\frac{\beta}{1-\beta\gamma}\right)^{\beta}}{\displaystyle \left(C_{1}+C_{1}\sum_{|\alpha|=0}^{m}\int|\nabla^{\alpha}f|^{2}d\mu\right)^{\beta\gamma}\left(C_{2}+C_{2}\sum_{|\alpha|=0}^{m}\int|\nabla^{\alpha}f|^{2}d\mu\right)^{1-\beta\gamma}}\\
 & \leq{\displaystyle \left(\frac{\beta}{1-\beta\gamma}\right)^{\beta}}{\displaystyle \left(C+C\sum_{|\alpha|=0}^{m}\int|\nabla^{\alpha}f|^{2}d\mu\right),}
\end{array}\label{eq:6}
\end{equation}
where $C=max\{C_{1},C_{2}\}.$ Now we go back to (\ref{eq:5}) to deal with term $A.$ Recalling
that $g=|f|\left|\log(f^{2})\right|^{\frac{\beta m}{2}}$ and using the base case (\ref{eq:1}),
\begin{equation}
\begin{array}{cl}
A & {\displaystyle =}\int_{\{f^{2}\geq e\}}g^{2}\left|\log\left({\displaystyle \frac{g^{2}}{\int g^{2}d\mu}}\right)\right|^{\beta}d\mu\\
 & =\int_{\{f^{2}\geq e\}}f^{2}\left|\log(f^{2})\right|^{\beta m}\left|\log\left({\displaystyle \frac{f^{2}\left|\log(f^{2})\right|^{\beta m}}{\int f^{2}\left|\log(f^{2})\right|^{\beta m}d\mu}}\right)\right|^{\beta}d\mu\\
 & \leq\int_{\{f^{2}\geq e\}}(f+e)^{2}\left|\log((f+e)^{2})\right|^{\beta m}\left|\log\left({\displaystyle \frac{(f+e)^{2}\left|\log((f+e)^{2})\right|^{\beta m}}{\int(f+e)^{2}\left|\log((f+e)^{2})\right|^{\beta m}d\mu}}\right)\right|^{\beta}d\mu\\
 & \leq C{\displaystyle \sum_{i=1}^{n}}{\displaystyle \int\left|X_{i}\left((f+e)\left|\log((f+e)^{2})\right|^{\frac{\beta m}{2}}\right)\right|^{2}d\mu}\\
 & =C{\displaystyle \sum_{i=1}^{n}\int\left|(X_{i}f)\left(\log((f+e)^{2})\right)^{\frac{\beta m}{2}}+\beta m\left(\log((f+e)^{2})\right)^{\frac{\beta m}{2}-1}X_{i}f\right|^{2}d\mu}\\
 & \leq2C(1+(\beta m)^{2}){\displaystyle \sum_{i=1}^{n}\int|X_{i}f|^{2}(\log((f+e)^{2})^{\beta m}d\mu}\\
 & =2C(1+(\beta m)^{2}){\displaystyle \sum_{i=1}^{n}\left(\int|X_{i}f|^{2}d\mu\right)\int\frac{|X_{i}f|^{2}}{\left(\int|X_{i}f|^{2}d\mu\right)}\left((\log(f^{2}+e))^{\beta m}\right)d\mu.}
\end{array}\label{eq:7}
\end{equation}
Using Young's inequality ${\displaystyle \int|U||V|d\mu\leq c\left(1+\int U(\log(U))^{\beta m}d\mu+\int e^{|V|^{\frac{1}{\beta m}}}d\mu\right)},$
with $U={\displaystyle \frac{|X_{i}f|^{2}}{\int|X_{i}f|^{2}d\mu}}$
and $V=\left|\log(f^{2}+e)\right|^{\beta m}$ for the term on the right-hand side of (\ref{eq:7}), we get: 
\begin{equation}
\begin{array}{cl}
A & \leq{\displaystyle 2Cc(1+(\beta m)^{2}){\displaystyle \sum_{i=1}^{n}\left(\int|X_{i}f|^{2}d\mu\right)\left(2+eZ+\int\frac{|X_{i}f|^{2}}{\left(\int|X_{i}f|^{2}d\mu\right)}\log\left(\frac{|X_{i}f|^{2}}{\left(\int|X_{i}f|^{2}d\mu\right)}\right)^{\beta m}d\mu)\right)}}\\
 & ={\displaystyle C_{1}{\displaystyle \sum_{i=1}^{n}\left(\int|X_{i}f|^{2}d\mu\right)+C_{2}{\displaystyle \sum_{i=1}^{n}}\int|X_{i}f|^{2}\log\left(\frac{|X_{i}f|^{2}}{\left(\int|X_{i}f|^{2}d\mu\right)}\right)^{\beta m}d\mu}}\\
 & \leq{\displaystyle C_{1}\sum_{i=1}^{n}\left(\int|X_{i}f|^{2}d\mu\right)+C_{2}C\sum_{|\alpha|=0}^{m}\int|\nabla^{\alpha}X_{i}f|^{2}d\mu}\\
 & \leq{\displaystyle \tilde{C}\sum_{|\alpha|=0}^{m+1}\int|\nabla^{\alpha}f|^{2}d\mu,}
\end{array}\label{eq:8}
\end{equation}
where the last inequality is true by the inductive hypothesis (\ref{eq:2}). Replacing (\ref{eq:6}) and (\ref{eq:8}) in (\ref{eq:5}), we get: 
\[
\int|f|^{2}\left|\log\left(\frac{|f|^{2}}{\mu|f|^{2}}\right)\right|^{\beta(m+1)}d\mu\leq C\sum_{|\alpha|=0}^{m+1}\int|\nabla^{\alpha}f|^{2}d\mu.
\]
\end{proof}

\section{
Higher Order $\beta-$Logarithmic Sobolev Inequality for $p>0$} \label{sec3} 
Let $\mathbb{G}$ be a Carnot group with $n$ generators $X_{1},X_{2},...,X_{n},$
and subgradient $\nabla_{\mathbb{}}:=(X_{1},X_{2},...,X_{n}).$ We extend the Logarithmic-Sobolev inequality to higher order with a proof inspired by J. Rosen \cite{Ro}.
\begin{thm}
Let $p>0.$ Given the following Logarithmic Sobolev inequality:
\begin{equation}
\int|f|^{p}\left|\log\left(\frac{|f|^{p}}{\mu|f|^{p}}\right)\right|^{\beta}d\mu\leq C\mu|\nabla f|^{p},\label{eq:1}
\end{equation}
for $\beta\in(0,1).$ Then, for all $m\in\mathbb{N},$
\begin{equation}
\int|f|^{p}\left|\log\left(\frac{|f|^{p}}{\mu|f|^{p}}\right)\right|^{\beta m}d\mu\leq C\sum_{|\alpha|=0}^{m}\int|\nabla^{\alpha}f|^{p}d\mu,\label{eq:2}
\end{equation}
where $\nabla^{\alpha}f=(X_{1}^{\alpha_{1}}X_{2}^{\alpha_{2}}...X_{n}^{\alpha_{n}}f)$
such that $|\alpha|={\displaystyle \sum_{i=1}^{n}\alpha_{i}.}$
\end{thm}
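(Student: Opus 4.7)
The plan is to repeat, with $p$ in place of $2$, the induction argument from Section~\ref{sec2}. I would induct on $m$, assume \eqref{eq:2} for $m$, and normalize by homogeneity so that $\int |f|^p d\mu = 1$, reducing the goal to bounding $\int |f|^p |\log|f|^p|^{\beta(m+1)}d\mu$ by a constant times $1 + \sum_{|\alpha|\le m+1}\int |\nabla^\alpha f|^p d\mu$. The set $\{|f|^p \le e\}$ is handled as before by elementary estimates (using $s(\log(1/s))^{\beta(m+1)}$ bounded on $(0,1)$ and $s|\log s|^{\beta(m+1)}$ bounded on $[1,e]$). On $\{|f|^p \ge e\}$, I introduce
\[
g := |f|\,|\log|f|^p|^{\frac{\beta m}{p}},\qquad \text{so that}\ g^p = |f|^p |\log|f|^p|^{\beta m} \ge |f|^p,
\]
and use $|f|^p|\log f^p|^\beta \le g^p|\log g^p|^\beta$ exactly as in the $p=2$ case.

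Next, write $\log g^p = \log(g^p / \int g^p d\mu) + \log\int g^p d\mu$ and use $|a+b|^\beta \le |a|^\beta + |b|^\beta$ (valid since $\beta<1$) to split the integral into $A+B$. For $B$, I would choose $\gamma \in (1,1/\beta)$ and rewrite the logarithm as $\tfrac{\beta}{1-\beta\gamma}\log\bigl(\int g^p d\mu\bigr)^{(1-\beta\gamma)/\beta}$, then apply Jensen's inequality to the concave map $x \mapsto x^{\beta\gamma}$, arriving at a product of $\bigl(\int f^p |\log f^p|^{m/\gamma}d\mu\bigr)^{\beta\gamma}$ and $\bigl(\int f^p|\log f^p|^{\beta m}d\mu\bigr)^{1-\beta\gamma}$; both factors are controlled by the inductive hypothesis because $1/\gamma < 1$ and $\beta < 1$, so each exponent in the logarithm is at most $m\cdot(\text{something}\le 1)$, matching the form \eqref{eq:2} at level $m$.

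For $A$, I would apply the base case $p$-LSI \eqref{eq:1} to $g$ (after the usual $f \rightsquigarrow f+e$ regularization to avoid singularities of $\log$), giving $A \le C \sum_i \int |X_i g|^p d\mu$. The chain/product rule yields
\[
X_i g = (X_i f)\,|\log|f|^p|^{\beta m/p} + \beta m\,(X_i f)\,|\log|f|^p|^{\beta m/p - 1}\operatorname{sgn}(\log|f|^p),
\]
and with the power inequality $|a+b|^p \le 2^{(p-1)_+}(|a|^p+|b|^p)$ this is dominated pointwise by $c(p,m)\,|X_i f|^p |\log(f^2+e)|^{\beta m}$. Writing this as the product $(|X_i f|^p/\int |X_if|^p d\mu)\cdot |\log(f^2+e)|^{\beta m}$ times $\int |X_if|^p d\mu$ and applying the Young-type inequality $\int UV d\mu \le c(1+\int U(\log U)^{\beta m} d\mu + \int e^{|V|^{1/(\beta m)}}d\mu)$ with $U=|X_if|^p/\int|X_if|^p d\mu$, $V=|\log(f^2+e)|^{\beta m}$, reduces $A$ to $\int |X_i f|^p |\log(|X_i f|^p/\mu|X_i f|^p)|^{\beta m} d\mu$ plus lower-order terms; the inductive hypothesis applied to $X_i f$ then bounds this by $\sum_{|\alpha|\le m}\int|\nabla^\alpha X_i f|^p d\mu \le \sum_{|\alpha|\le m+1}\int|\nabla^\alpha f|^p d\mu$.

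The main obstacle I anticipate is bookkeeping in the chain-rule step for $X_i g$: for $0<p<1$ the direction of convexity flips, so I would use $|a+b|^p \le |a|^p+|b|^p$ there, while for $p\ge 1$ the constant $2^{p-1}$ appears; both cases must be stated carefully. A secondary nuisance is ensuring that the $p$-version of the Young / Orlicz duality $\mathbb{L}\log^{\beta m}\mathbb{L}$ vs.\ $\exp\mathbb{L}^{1/(\beta m)}$ remains valid after the switch from $p=2$, but this inequality is exponent-free in the variables $U,V$, so once one rewrites $|X_if|^p$ as the new $U$ it applies verbatim. Everything else is a mechanical substitution $2 \rightsquigarrow p$ in the Rosen-type argument of Section~\ref{sec2}.
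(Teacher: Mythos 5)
Your proposal matches the paper's own proof of this $p$-version step for step: same induction on $m$, same normalization $\int|f|^p d\mu=1$, same split at $|f|^p=e$, same auxiliary function $g=|f|\,|\log|f|^p|^{\beta m/p}$, same $A$-$B$ decomposition with $\gamma\in(1,1/\beta)$ and Jensen's inequality for $B$, and the same chain-rule plus Young's-inequality treatment of $A$ followed by the inductive hypothesis applied to $X_if$. The paper handles the $p<1$ versus $p\ge 1$ bookkeeping you flag simply by writing $\max\{1,2^{p-1}\}$, and uses the regularization $|f|\rightsquigarrow|f|+e$ (producing a factor $f/|f|$ in the chain rule), which is a minor notational variant of what you describe.
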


\begin{proof}
The proof is by induction. We assume (\ref{eq:2}) is true for $m$,
so we need to show the statement is true for $m+1$: 
\begin{equation}
\int|f|^{p}\left|\log\left(\frac{|f|^{p}}{\mu|f|^{p}}\right)\right|^{\beta(m+1)}d\mu\leq C\sum_{|\alpha|=0}^{m+1}\int|\nabla^{\alpha}f|^{p}d\mu.\label{eq:3}
\end{equation}
By homogeneity, we can consider ${\displaystyle \int|f|^{p}d\mu=1}$ and prove 
\begin{equation}
\int|f|^{p}\left|\log|f|^{p}\right|^{\beta(m+1)}d\mu\leq C\left(1+\sum_{|\alpha|=0}^{m+1}\int|\nabla^{\alpha}f|^{p}d\mu\right).\label{eq:4}
\end{equation}
In fact, replace $f$ by ${\displaystyle \frac{f}{\left(\int|f|^{p}d\mu\right)^{\frac{1}{p}}}}$
in (\ref{eq:4}), to get
\[
\int\frac{|f|^{p}}{\left(\int|f|^{p}d\mu\right)}\left|\log\left(\frac{|f|^{p}}{\mu|f|^{p}}\right)\right|^{\beta(m+1)}d\mu\leq C\left(1+\sum_{|\alpha|=0}^{m+1}\int\left|\nabla^{\alpha}\left(\frac{f}{\left(\int|f|^{p}d\mu\right)^{\frac{1}{p}}}\right)\right|^{p}d\mu\right)
\]
\[
\frac{1}{\left(\int|f|^{p}d\mu\right)}\int|f|^{p}\left|\log\left(\frac{|f|^{p}}{\mu|f|^{p}}\right)\right|^{\beta(m+1)}d\mu\leq C\left(1+\frac{1}{\left(\int|f|^{p}d\mu\right)}\sum_{|\alpha|=0}^{m+1}\int\left|\nabla^{\alpha}f\right|^{p}d\mu\right),
\]
multiplying both sides by $\int|f|^{p}d\mu,$ we recover (\ref{eq:3}).
Hence, it suffices to prove (\ref{eq:4}). First consider $|f|^{p}\leq e,$
\[
\begin{array}{ll}
\int|f|^{p}\left|\log|f|^{p}\right|^{\beta(m+1)}d\mu & ={\displaystyle \int_{\{|f|^{p}<1\}}|f|^{p}\left|\log|f|^{p}\right|^{\beta(m+1)}d\mu+\int_{\{1\leq|f|^{p}\leq e\}}|f|^{p}\left|\log|f|^{p}\right|^{\beta(m+1)}d\mu}\\
 & {\displaystyle =\int_{\{|f|^{p}<1\}}|f|^{p}\left(\log\frac{1}{|f|^{p}}\right)^{\beta(m+1)}d\mu+\int_{\{1\leq|f|^{p}\leq e\}}|f|^{p}\left(\log|f|^{p}\right)^{\beta(m+1)}d\mu}\\
 & {\displaystyle {\displaystyle =\int_{\{|f|^{p}<1\}}|f|^{p}\left(\log\frac{1}{|f|^{p}}\right)^{\beta(m+1)}d\mu+\int_{\{1\leq|f|^{p}\leq e\}}|f|^{p}\left(\log|f|^{p}\right)^{\beta(m+1)}d\mu}}\\
 & \leq b\int_{\{|f|^{p}<1\}}1d\mu+\int_{\{1\leq|f|^{p}\leq e\}}ed\mu\\
 & \leq C.
\end{array},
\]
where we have used that for $|f|^{p}<1,$ ${\displaystyle \left(\log\frac{1}{|f|^{p}}\right)^{\beta(m+1)}\leq\frac{b}{|f|^{p}}}$
and $C=Z(b+e).$ Now we consider the case where $|f|^{p}\geq e:$
\[
\begin{array}{cl}
\int_{\{|f|^{p}\geq e\}}|f|^{p}\left|\log|f|^{p}\right|^{\beta(m+1)}d\mu & =\int_{\{|f|^{p}\geq e\}}|f|^{p}\left|\log(|f|^{p})\right|^{\beta m}\left|\log(|f|^{p})\right|^{\beta}d\mu\\
 & \leq\int_{\{|f|^{p}\geq e\}}|f|^{p}\left|\log(|f|^{p})\right|^{\beta m}\left|\log\left(|f|^{p}\left|\log(|f|^{p})\right|^{\beta m}\right)\right|^{\beta}d\mu,
\end{array}
\]
where the last step is true since $|f|^{p}\leq|f|^{p}\left|\log(|f|^{p})\right|^{\beta m}.$
Let $g=|f|\left|\log(|f|^{p})\right|^{\frac{\beta m}{p}},$ then 
\begin{equation}
\begin{array}{cl}
\int_{\{|f|^{p}\geq e\}}|f|^{p}\left|\log|f|^{p}\right|^{\beta(m+1)}d\mu & \leq\int_{\{|f|^{p}\geq e\}}g^{p}\left|\log\left(g^{p}\right)\right|^{\beta}d\mu\\
 & =\int_{\{|f|^{p}\geq e\}}g^{p}\left|\log\left({\displaystyle \frac{g^{p}}{\int g^{p}d\mu}}\right)+\log\left(\int g^{p}d\mu\right)\right|^{\beta}d\mu\\
 & \leq\int_{\{|f|^{p}\geq e\}}g^{p}\left|\log\left({\displaystyle \frac{g^{p}}{\int g^{p}d\mu}}\right)\right|^{\beta}d\mu+\int_{\{|f|^{p}\geq e\}}g^{p}\left|\log\left(\int g^{p}d\mu\right)\right|^{\beta}d\mu.
\end{array}\label{eq:5}
\end{equation}
Denote the first term on the right-hand side of (\ref{eq:5}) by ${\displaystyle A=}\int_{\{|f|^{p}\geq e\}}g^{p}\left|\log\left({\displaystyle \frac{g^{p}}{\int g^{p}d\mu}}\right)\right|^{\beta},$ and the second term by \textbf{${\displaystyle B=}\int_{\{|f|^{p}\geq e\}}g^{p}\left|\log\left(\int g^{p}d\mu\right)\right|^{\beta}d\mu.$ } We first replace $g$ in $B.$ By continuity since $\beta<1$, we can find a positive number $\gamma$ such that ${\displaystyle 1<\gamma<\frac{1}{\beta}.}$ We get:
\[
\begin{array}{cl}
B & =\left(\int_{\{|f|^{p}\geq e\}}|f|^{p}\left|\log(|f|^{p})\right|^{\beta m}d\mu\right)\left|\log\left(\int|f|^{p}\left|\log(|f|^{p})\right|^{\beta m}d\mu\right)\right|^{\beta}\\
 & =\left(\int_{\{|f|^{p}\geq e\}}{\displaystyle |f|^{p}}{\displaystyle \left|\log(|f|^{p})\right|^{\frac{\beta m\gamma}{\gamma}}}d\mu\right)\left|\frac{\beta}{1-\beta\gamma}\log\left(\int|f|^{p}{\displaystyle \left|\log(|f|^{p})\right|^{\beta m}}d\mu\right)^{\frac{1-\beta\gamma}{\beta}}\right|^{\beta}\\
 & \leq{\displaystyle \left(\frac{\beta}{1-\beta\gamma}\right)^{\beta}}\left(\left(\int_{\{|f|^{p}\geq e\}}|f|^{p}d\mu\right)\int_{\{|f|^{p}\geq e\}}{\displaystyle \frac{|f|^{p}}{\left(\int_{\{|f|^{p}\geq e\}}|f|^{p}d\mu\right)}}{\displaystyle \left|\log(|f|^{p})\right|^{\frac{\beta m\gamma}{\gamma}}}d\mu\right)\left(\int|f|^{p}{\displaystyle \left|\log(|f|^{p})\right|^{\beta m}}d\mu\right)^{1-\beta\gamma}.
\end{array}
\]
Since $\beta\gamma<1,$ we have that $\phi(x)={\displaystyle x^{\beta\gamma}}$ is concave, so we apply Jensen's inequality:
\[
\begin{array}{cl}
B & \leq{\displaystyle \left(\frac{\beta}{1-\beta\gamma}\right)^{\beta}}\left(\int_{\{|f|^{p}\geq e\}}|f|^{p}d\mu\right)^{1-\beta\gamma}\left(\int_{\{|f|^{p}\geq e\}}|f|^{p}{\displaystyle \left|\log(|f|^{p})\right|^{\frac{m}{\gamma}}}d\mu\right)^{\beta\gamma}\left(\int|f|^{p}{\displaystyle \left|\log(|f|^{p})\right|^{\beta m}}d\mu\right)^{1-\beta\gamma}\\
 & {\displaystyle \leq\left(\frac{\beta}{1-\beta\gamma}\right)^{\beta}}\left(\int|f|^{p}{\displaystyle \left|\log(|f|^{p})\right|^{\frac{m}{\gamma}}}d\mu\right)^{\beta\gamma}\left(\int|f|^{p}{\displaystyle \left|\log(|f|^{p})\right|^{\beta m}}d\mu\right)^{1-\beta\gamma}
\end{array}
\]
Now, since ${\displaystyle \frac{1}{\gamma}<1}$ and $\beta<1,$ we use the inductive hypothesis (\ref{eq:2}), 
\begin{equation}
\begin{array}{cl}
B & \leq{\displaystyle \left(\frac{\beta}{1-\beta\gamma}\right)^{\beta}}\left(\int|f|^{p}{\displaystyle \left|\log(|f|^{p})\right|^{\frac{m}{\gamma}}}d\mu\right)^{\beta\gamma}\left(\int|f|^{p}{\displaystyle \left|\log(|f|^{p})\right|^{\beta m}}d\mu\right)^{1-\beta\gamma}\\
 & \leq{\displaystyle \left(\frac{\beta}{1-\beta\gamma}\right)^{\beta}}{\displaystyle \left(C_{1}+C_{1}\sum_{|\alpha|=0}^{m}\int|\nabla^{\alpha}f|^{p}d\mu\right)^{\beta\gamma}\left(C_{2}+C_{2}\sum_{|\alpha|=0}^{m}\int|\nabla^{\alpha}f|^{p}d\mu\right)^{1-\beta\gamma}}\\
 & \leq{\displaystyle \left(\frac{\beta}{1-\beta\gamma}\right)^{\beta}}{\displaystyle \left(C+C\sum_{|\alpha|=0}^{m}\int|\nabla^{\alpha}f|^{2}d\mu\right),}
\end{array}\label{eq:6}
\end{equation}
where $C=\max\{C_{1},C_{2}\}.$ Now we go back to to (\ref{eq:5}) to deal with term $A.$ Recalling
that $g=|f|\left|\log(|f|^{p})\right|^{\frac{\beta m}{p}}$ and using the base case (\ref{eq:1}),
\begin{equation}
\begin{array}{cl}
A & {\displaystyle =}\int_{\{|f|^{p}\geq e\}}g^{p}\left|\log\left({\displaystyle \frac{g^{p}}{\int g^{p}d\mu}}\right)\right|^{\beta}d\mu\\
 & =\int_{\{|f|^{p}\geq e\}}|f|^{p}\left|\log(|f|^{p})\right|^{\beta m}\left|\log\left({\displaystyle \frac{|f|^{p}\left|\log(|f|^{p})\right|^{\beta m}}{\int|f|^{p}\left|\log(|f|^{p})\right|^{\beta m}d\mu}}\right)\right|^{\beta}d\mu\\
 & \leq\int_{\{|f|^{p}\geq e\}}(|f|+e)^{p}\left|\log((|f|+e)^{p})\right|^{\beta m}\left|\log\left({\displaystyle \frac{(|f|+e)^{p}\left|\log((|f|+e)^{p})\right|^{\beta m}}{\int(|f|+e)^{p}\left|\log((|f|+e)^{p})\right|^{\beta m}d\mu}}\right)\right|^{\beta}d\mu\\
 & \leq C{\displaystyle \sum_{i=1}^{n}}{\displaystyle \int\left|X_{i}\left((|f|+e)\left|\log((|f|+e)^{p})\right|^{\frac{\beta m}{p}}\right)\right|^{p}d\mu}\\
 & =C{\displaystyle \sum_{i=1}^{n}\int\left|\frac{f}{|f|}(X_{i}f)\left(\log((|f|+e)^{p})\right)^{\frac{\beta m}{p}}+\beta m\frac{f}{|f|}\left(\log((|f|+e)^{p})\right)^{\frac{\beta m}{p}-1}X_{i}f\right|^{p}d\mu}\\
 & \leq\max\{1,2^{p-1}\}C(1+(\beta m)^{p}){\displaystyle \sum_{i=1}^{n}\int|X_{i}f|^{p}(\log((|f|+e)^{p})^{\beta m}d\mu}\\
 & =\max\{1,2^{p-1}\}C(1+(\beta m)^{p}){\displaystyle \sum_{i=1}^{n}\left(\int|X_{i}f|^{p}d\mu\right)\int\frac{|X_{i}f|^{p}}{\left(\int|X_{i}f|^{p}d\mu\right)}\left((\log(|f|+e)^{p})^{\beta m}\right)d\mu}.
\end{array}\label{eq:7}
\end{equation}
Using Young's inequality ${\displaystyle \int|U||V|d\mu\leq c\left(1+\int U(\log(U))^{\beta m}d\mu+\int e^{|V|^{\frac{1}{\beta m}}}d\mu\right)},$
with $U={\displaystyle \frac{|X_{i}f|^{p}}{\int|X_{i}f|^{p}d\mu}}$ and $V=\left|\log(|f|+e)^{p}\right|^{\beta m}$ for the term on the right-hand side of (\ref{eq:7}), we get: 

\begin{equation}
\begin{array}{cl}
A & \leq{\displaystyle \max\{1,2^{p-1}\}Cc(1+(\beta m)^{p}) \sum_{i=1}^{n}\left(\int|X_{i}f|^{p}d\mu\right)\cdot}\\
    & {\displaystyle\qquad\qquad\qquad\qquad\qquad\left(1+\max\{1,2^{p-1}\}+\max\{1,2^{p-1}\}e^{p}Z+\int\frac{|X_{i}f|^{p}}{\left(\int|X_{i}f|^{p}d\mu\right)}\log\left(\frac{|X_{i}f|^{p}}{\left(\int|X_{i}f|^{p}d\mu\right)}\right)^{\beta m}d\mu)\right)}\\
 & ={\displaystyle C_{1}{\displaystyle \sum_{i=1}^{n}\left(\int|X_{i}f|^{p}d\mu\right)+C_{2}{\displaystyle \sum_{i=1}^{n}}\int|X_{i}f|^{p}\log\left(\frac{|X_{i}f|^{p}}{\left(\int|X_{i}f|^{p}d\mu\right)}\right)^{\beta m}d\mu}}\\
 & \leq{\displaystyle C_{1}\sum_{i=1}^{n}\left(\int|X_{i}f|^{p}d\mu\right)+C_{2}C\sum_{|\alpha|=0}^{m}\int|\nabla^{\alpha}X_{i}f|^{p}d\mu}\\
 & \leq{\displaystyle \tilde{C}\sum_{|\alpha|=0}^{m+1}\int|\nabla^{\alpha}f|^{p}d\mu,}
\end{array}\label{eq:8}
\end{equation}

where the last inequality is true by the inductive hypothesis (\ref{eq:2}).
Replacing (\ref{eq:6}) and (\ref{eq:8}) in (\ref{eq:5}), we get: 

\[
\int|f|^{p}\left|\log\left(\frac{|f|^{p}}{\mu|f|^{p}}\right)\right|^{\beta(m+1)}d\mu\leq C\sum_{|\alpha|=0}^{m+1}\int|\nabla^{\alpha}f|^{p}d\mu.
\]
\end{proof}

\section{Inductive bounds} \label{sec4}

Let $d\mu\equiv e^{-U}d\lambda$. Suppose for some $a\in(0,\infty)$ and non-negative differentiable function $\mathcal{W}$, the following holds
\begin{equation}\label{a1}
 \mu \mathcal{W} f^2  \leq a\left(\mu|\nabla f|^2 +\mu f^2\right).
\end{equation}
 Then, for $n\in\mathbb{N}$, we have
 \[ \mu \mathcal{W} ^{n+1} f^2\leq a\mu\left( \left|\nabla\left(\mathcal{W} ^{\frac{n}{2}}f\right)\right|^2  +\left(\mathcal{W} ^{\frac{n}{2}}f\right)^2\right), \]
 whence,
 \[ \begin{split}
     \mu \mathcal{W}^{n+1} f^2&\leq 
     a\mu\left( 2 \mathcal{W} ^{n}\left|\nabla f\right|^2  + 2|\nabla\mathcal{W} ^{\frac{n}{2}}|^2 f^2 +\left(\mathcal{W} ^{\frac{n}{2}}f\right)^2 \right)\\
     &=a\mu\left( 2  \mathcal{W} ^{n}\left|\nabla f\right|^2  +  \frac{n^2}{2}  \mathcal{W} ^{n-2}|\nabla\mathcal{W} |^2 f^2 +\mathcal{W} ^n f^2\right).
 \end{split}\]
Suppose  
there exist constants $\varepsilon\in(0,\infty)$ and  $E_{\varepsilon}\in(0,\infty)$ such that, if $\mathcal{W}\neq 0$, then
\[ \frac{1}{\mathcal{W}}|\nabla \mathcal{W} |^2\leq \varepsilon \mathcal{W} ^2 +E_{\varepsilon}\tag{$\star$}\]
In this case, using 
($\star$), we have
\[ \begin{split}
     \mu \mathcal{W} ^{n+1} f^2&\leq 
     a\mu\left( 2  \mathcal{W} ^{n}\left|\nabla f\right|^2  +  \frac{n^2}{2}  \mathcal{W} ^{n-2}|\nabla\mathcal{W} |^2 f^2 +\mathcal{W} ^n f^2\right)\\
     &\leq 
     a\mu\left( 2  \mathcal{W} ^{n}\left|\nabla f\right|^2  +  \frac{\varepsilon n^2}{2}  \mathcal{W} ^{n+1}  f^2 + \mathcal{W} ^n f^2+ \frac{n^2}{2}E_\varepsilon  \mathcal{W}^{n-1}  f^2\right).
 \end{split}\]
 and hence, if $\varepsilon an^2/2<1$, we get
 \[ \begin{split}
     \mu \mathcal{W} ^{n+1} f^2& \leq 
    \frac{a}{1-\varepsilon an^2/2} a\mu\left( 2  \mathcal{W} ^{n}\left|\nabla f\right|^2    + \mathcal{W} ^n f^2+ \frac{n^2}{2}E_\varepsilon  \mathcal{W}^{n-1}  f^2\right).
 \end{split}\]
 Hence, by induction, we arrive at the following result:

 \begin{thm}
 Suppose  outside a  ball $B_R$,
there exists an $\varepsilon\equiv \varepsilon(R)\in(0,\infty)$ and a  constant $E_{\varepsilon}\in(0,\infty)$ such that
\[ \frac{1}{\mathcal{W} }|\nabla \mathcal{W} |^2\leq \varepsilon \mathcal{W} ^2 +E_{\varepsilon}.\]
If $\varepsilon a\frac{n^2}{2}<1$, 
then for every $m\leq n$
 there exists a constant $a_m\in(0,\infty)$ 
 \[\mu\left( \mathcal{W} ^{m} f^2\right) \leq a_m\mu\left( \sum_{k=1}^m \left|\nabla^k f\right|^2   +  f^2 \right).\]
 \end{thm}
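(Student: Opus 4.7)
The plan is to induct on $m$, using assumption (\ref{a1}) as the base case $m=1$, and to follow the manipulation already carried out in the calculation preceding the theorem statement. For the inductive step from $m$ to $m+1$, I would substitute $\mathcal{W}^{m/2}f$ in place of $f$ in (\ref{a1}), expand the sub-gradient via the Leibniz identity
\[ \bigl|\nabla(\mathcal{W}^{m/2}f)\bigr|^2 \leq 2\mathcal{W}^m|\nabla f|^2 + \tfrac{m^2}{2}\mathcal{W}^{m-2}|\nabla\mathcal{W}|^2 f^2, \]
and then apply the hypothesis $(\star)$ to replace $\mathcal{W}^{-1}|\nabla\mathcal{W}|^2$ by $\varepsilon\mathcal{W}^2+E_\varepsilon$. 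Since $\varepsilon a m^2/2<1$ for every $m\leq n$, the resulting $\frac{\varepsilon a m^2}{2}\mu\mathcal{W}^{m+1}f^2$ term can be absorbed on the left, leaving
\[ \mu\mathcal{W}^{m+1}f^2 \leq C\,\mu\!\left(\mathcal{W}^m|\nabla f|^2 + \mathcal{W}^m f^2 + \mathcal{W}^{m-1}f^2\right). \]

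To close the induction I would bound each term on the right in turn. The terms $\mu\mathcal{W}^m f^2$ and $\mu\mathcal{W}^{m-1}f^2$ are handled by the inductive hypothesis at levels $m$ and $m-1$ applied to $f$ itself. For the mixed term, I would write $\mu\mathcal{W}^m|\nabla f|^2=\sum_{i}\mu\mathcal{W}^m|X_if|^2$ and apply the inductive hypothesis at level $m$ to each $X_if$, obtaining
\[ \mu\mathcal{W}^m|X_if|^2 \leq a_m\,\mu\!\left(\sum_{k=1}^{m}|\nabla^k X_i f|^2 + |X_i f|^2\right) \leq C\,\mu\!\left(\sum_{k=2}^{m+1}|\nabla^k f|^2 + |\nabla f|^2\right), \]
where the last step uses that the $\mathbb{L}_2(\mu)$-norm of a differential monomial $\nabla^\alpha X_i f$ is controlled, up to a combinatorial constant coming from the (finite) number of multi-indices of fixed length, by the corresponding $|\nabla^{|\alpha|+1}f|$ contribution, even in the non-commutative Carnot-group setting. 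Summing over $i$ and assembling all contributions delivers the bound at level $m+1$.

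The main technical obstacle is that $(\star)$ is assumed only outside $B_R$, so each integral involving $|\nabla\mathcal{W}|^2/\mathcal{W}$ must be split as $\int_{B_R}+\int_{B_R^c}$. On $B_R^c$ the argument proceeds as above; on $B_R$ the functions $\mathcal{W}$ and $|\nabla\mathcal{W}|$ are bounded by constants depending on $R$, so the in-ball contribution is controlled by a multiple of $\mu f^2$ alone and can be harmlessly incorporated into the additive $\mu f^2$ term of the final bound. One must verify that this localization does not reintroduce any $\mathcal{W}^{m+1}$-type contribution that would destroy the absorption, and that the constants $a_m$ produced at each step stay finite; both are routine once the geometric structure $\{\mathcal{W}\leq \sup_{B_R}\mathcal{W}\}\supset B_R$ is exploited. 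The smallness condition $\varepsilon a n^2/2<1$ then guarantees that the absorption step remains valid at every level $m\leq n$, completing the induction.
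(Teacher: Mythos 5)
Your proposal is correct and follows essentially the same inductive scheme as the paper: substitute $\mathcal{W}^{m/2}f$ into (\ref{a1}), use Leibniz and $(\star)$ to absorb the $\mathcal{W}^{m+1}$ term under the smallness condition $\varepsilon a m^2/2<1$, then close the induction by applying the inductive hypothesis to $f$ at levels $m,m-1$ and to each $X_if$ at level $m$. You also helpfully spell out the localization of $(\star)$ to $B_R^c$ and the treatment of the in-ball contribution, which the paper leaves implicit under its terse ``hence, by induction.''
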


\begin{example}
From \cite{BDZ}, we have
\[ \mu\left(\frac{g'(N)}{N^2} |f|^q \right)\leq C\mu\left( |\nabla f|^q \right) + D \mu\left( |f|^q \right).\]
Consider a differentiable function $F\equiv F(N)\geq 0$. Then we have
\[ \mu\left(\frac{g'(N)}{N^2}F^q(N) |f|^q \right)=  \mu\left(\frac{g'(N)}{N^2}|F(N)  f|^q \right)\leq 
C\mu\left( |\nabla (F(N)  f)|^q \right) + D \mu\left( F^q(N)  |f|^q \right).\]
\[\begin{split}
\leq C_1 \mu\left( |F(N)
|^q|\nabla   f |^q \right) +C_1 \mu\left(  |\nabla  ( F(N))|^q |f|^q\right) + D \mu\left( F^q(N)  |f|^q \right)\\
\leq C_1 \mu\left( |F(N)
|^q|\nabla   f |^q \right) + \mu\left(\left(   C_1|\nabla  ( F(N))|^q  + D   F^q(N) \right) |f|^q \right).
\end{split}
\]
If we assume that
\[ |F(N)|^q \leq \frac{g'(N)}{N^2} 
\]
and 
\[
C_1|\nabla  ( F(N))|^q  + D   F^q(N) \leq \frac{g'(N)}{N^2} ,
\]
then the iteration is finite and we get
\[ \mu\left(\frac{g'(N)}{N^2}F^q(N) |f|^q \right)
\leq C'' \mu\left(  |\nabla \nabla   f |^q \right) + C' \mu\left(  |  \nabla   f |^q \right)+ D' C'' \mu\left(  |  f |^q \right).
\]
\end{example}
For more examples, see also the end of Section \ref{sec9}.\\

\begin{example}
Let $d\mu\equiv e^{-U}d\lambda$ and
let
\[ \mathcal{V}\equiv \frac{1}{4}|\nabla U|^2-\frac12\Delta U\]
Suppose exist $C\in(0,\infty)$ such that
\[\mathcal{V_C}\equiv \mathcal{V} +C\geq 0 \]
and for some $a\in(0,\infty)$
\begin{equation}\label{a1}
 \mu f^2 \mathcal{V_C} \leq a\left(\mu|\nabla f|^2 +\mu f^2\right).
\end{equation}
 Then we have
 \[ \mu \mathcal{V_C}^{n+1} f^2\leq a\mu\left( \left|\nabla\left(\mathcal{V_C}^{\frac{n}{2}}f\right)\right|^2  +\left(\mathcal{V_C}^{\frac{n}{2}}f\right)^2\right) \]
 whence
 \[ \begin{split}
     \mu \mathcal{V_C}^{n+1} f^2&\leq 
     a\mu\left( 2 \mathcal{V_C}^{n}\left|\nabla f\right|^2  + 2|\nabla\mathcal{V_C}^{\frac{n}{2}}|^2 f^2 +\left(\mathcal{V_C}^{\frac{n}{2}}f\right)^2 \right)\\
     &=a\mu\left( 2  \mathcal{V_C}^{n}\left|\nabla f\right|^2  +  \frac{n^2}{2}  \mathcal{V_C}^{n-2}|\nabla\mathcal{V_C}|^2 f^2 +\mathcal{V_C}^n f^2\right).
 \end{split}\]
Special case $n+1=2$: 
\[\mu \left(\mathcal{V_C}^2 f^2\right)\leq 
a\mu\left( 2 \mathcal{V_C} \left|\nabla f\right|^2  + \frac{1}{\mathcal{V_C}}  \frac12 |\nabla\mathcal{V_C}|^2f^2 + \mathcal{V_C}f^2\right)
\]
Potential bound: Let $U\equiv U(N)$. Then we have
\[\begin{split}
|\nabla\mathcal{V_C}|^2 &= \sum_i \left|  
\sum_j \left(\frac12X_j U X_iX_j U -\frac12 X_i X_j^2 U \right)\right|^2\\
&=\sum_i \left|  
\sum_j \left(\frac12(X_j U) X_i(U'X_jN)  -\frac12 X_i (X_j  (U'X_jN) )\right)\right|^2\\
&=\sum_i \left|  
\sum_j \left(\frac12( U' X_jN) \left(U"(X_iN)X_jN+U'X_iX_jN\right)  -\frac12 X_i \left(  U"(X_jN)^2+U'X_j^2N  \right)\right)\right|^2\\
&_{= \sum_i \left|  
\sum_j \left(\frac12( U' X_jN) \left(U"(X_iN)X_jN+U'X_iX_jN\right)  -\frac12   \left(  U'''(X_iN) (X_jN)^2+2U" (X_jN)X_iX_jN +
U"(X_iN) X_j^2N +
U'X_iX_j^2N  \right)\right)\right|^2 }\\
&_{= \sum_i \left|  
\sum_j   \frac12 U'   \left(U"(X_iN)(X_jN)^2+U'(X_iX_jN ) (X_j N) \right)  
-\frac12   \left( U'''(X_iN) (X_jN)^2+2U" (X_jN)X_iX_jN +
U"(X_iN) X_j^2N +
U'X_iX_j^2N  \right)\right|^2 }\\
 \end{split}
\]
Based on homogeneity considerations, one should be able to claim that 
there exists a small constant $\varepsilon\in(0,\infty)$ and a  constant $E_{\varepsilon}\in(0,\infty)$ such that
\[ \frac{1}{\mathcal{V}_C}|\nabla \mathcal{V}_C|^2\leq \varepsilon \mathcal{V}_C^2 +E_{\varepsilon}\tag{C$\star$}\]
Then we get 
\[\mu \left(\mathcal{V_C}^2 f^2\right)\leq 
a\mu\left( 2 \mathcal{V_C} \left|\nabla f\right|^2  + \frac{\varepsilon}2 \mathcal{V_C}^2f^2  + \mathcal{V_C}f^2+\frac{E_{\varepsilon}}2f^2\right)
\]
and hence
\[\mu \left(\mathcal{V_C}^2 f^2\right)\leq 
\frac{a}{1-\frac{a\varepsilon}{2}}\mu\left( 2 \mathcal{V_C} \left|\nabla f\right|^2    + \mathcal{V_C}f^2+\frac{E_{\varepsilon}}2f^2\right).
\]
Thus using our inductive assumption \eqref{a1}, we arrive at
\[\mu \left(\mathcal{V_C}^2 f^2\right)\leq 
\max\left(3a,a+\frac{E_{\varepsilon}}{2}\right)\frac{2a}{2- a\varepsilon}  \mu\left(  \left|\nabla\nabla f\right|^2    +  |\nabla f|^2+ f^2\right).
\]
Now for the general case, using 
 (C$\star$), we have
\[ \begin{split}
     \mu \mathcal{V_C}^{n+1} f^2&\leq 
     a\mu\left( 2  \mathcal{V_C}^{n}\left|\nabla f\right|^2  +  \frac{n^2}{2}  \mathcal{V_C}^{n-2}|\nabla\mathcal{V_C}|^2 f^2 +\mathcal{V_C}^n f^2\right)\\
     &\leq 
     a\mu\left( 2  \mathcal{V_C}^{n}\left|\nabla f\right|^2  +  \frac{\varepsilon n^2}{2}  \mathcal{V_C}^{n+1}  f^2 + \mathcal{V_C}^n f^2+ \frac{n^2}{2}E_\varepsilon  \mathcal{V_C}^{n-1}  f^2\right).
 \end{split}\]
 and hence
 \[ \begin{split}
     \mu \mathcal{V_C}^{n+1} f^2& \leq 
    \frac{a}{1-\varepsilon an^2/2} a\mu\left( 2  \mathcal{V_C}^{n}\left|\nabla f\right|^2    + \mathcal{V_C}^n f^2+ \frac{n^2}{2}E_\varepsilon  \mathcal{V_C}^{n-1}   f^2\right).
 \end{split}\]
 \end{example}
 Hence, by induction, we arrive at the following result:
 \begin{thm}
 Suppose  outside a  ball $B_R$
there exists an $\varepsilon\equiv \varepsilon(R)\in(0,\infty)$ and a  constant $E_{\varepsilon}\in(0,\infty)$ such that
\[ \frac{1}{\mathcal{V}_C}|\nabla \mathcal{V}_C|^2\leq \varepsilon \mathcal{V}_C^2 +E_{\varepsilon}.\]
If $\varepsilon a\frac{n^2}{2}<1$, 
then
 there exists a constant $a_n\in(0,\infty)$ 
 \[\mu\left( \mathcal{V_C}^{n} f^2\right) \leq 
      a_n\mu\left( \sum_{k=1}^n \left|\nabla^k f\right|^2   +  f^2\right).\]
\end{thm}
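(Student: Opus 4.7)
The plan is to prove the bound by induction on $n$, building on the computation worked out just before the theorem. The base case $n=1$ is the standing inductive assumption \eqref{a1} itself (applied with weight $\mathcal{V}_C$). For the step from $n$ to $n+1$, I would substitute the test function $\mathcal{V}_C^{n/2}f$ into that base inequality and expand the resulting $|\nabla(\mathcal{V}_C^{n/2}f)|^{2}$ by the Leibniz rule; this is exactly the chain of displays preceding the statement, and it produces
\[
\mu\bigl(\mathcal{V}_C^{\,n+1}f^{2}\bigr)\le a\,\mu\!\left(2\mathcal{V}_C^{\,n}|\nabla f|^{2}+\tfrac{n^{2}}{2}\mathcal{V}_C^{\,n-2}|\nabla\mathcal{V}_C|^{2}f^{2}+\mathcal{V}_C^{\,n}f^{2}\right).
\]

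Next I would use the pointwise bound $(\text{C}\star)$ to dispose of the middle term. Since $(\text{C}\star)$ is only assumed outside $B_{R}$, I first split the integration as $\mathbb{G}=B_{R}\cup B_{R}^{c}$. On $B_{R}^{c}$ the hypothesis replaces $\mathcal{V}_C^{\,n-2}|\nabla\mathcal{V}_C|^{2}$ by $\varepsilon\mathcal{V}_C^{\,n+1}+E_{\varepsilon}\mathcal{V}_C^{\,n-1}$; on the compact set $B_{R}$ the weight $\mathcal{V}_C$ is bounded by some $M_{R}<\infty$ (by continuity of $U$ together with its first two derivatives), so the integrand on the bad piece is controlled by a constant depending only on $R$ and $n$ times $f^{2}+|\nabla f|^{2}$. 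The hypothesis $\varepsilon an^{2}/2<1$ then permits absorbing the $\varepsilon an^{2}/2\,\mu(\mathcal{V}_C^{\,n+1}f^{2})$ contribution into the left-hand side, leaving
\[
\mu\bigl(\mathcal{V}_C^{\,n+1}f^{2}\bigr)\le\frac{a}{1-\varepsilon an^{2}/2}\,\mu\!\left(2\mathcal{V}_C^{\,n}|\nabla f|^{2}+\mathcal{V}_C^{\,n}f^{2}+\tfrac{n^{2}}{2}E_{\varepsilon}\mathcal{V}_C^{\,n-1}f^{2}\right)+C_{R,n}\,\mu\bigl(f^{2}+|\nabla f|^{2}\bigr).
\]

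It remains to rewrite each weighted integral on the right as a sum $\sum_{k}\mu|\nabla^{k}f|^{2}$. The terms $\mu(\mathcal{V}_C^{\,n}f^{2})$ and $\mu(\mathcal{V}_C^{\,n-1}f^{2})$ fall under the induction hypothesis at levels $n$ and $n-1$ directly. The gradient term $\mu(\mathcal{V}_C^{\,n}|\nabla f|^{2})$ requires applying the hypothesis at level $n$ coordinate-wise to $X_{i}f$ in place of $f$ and summing over $i$, which gives
\[
\mu\bigl(\mathcal{V}_C^{\,n}|\nabla f|^{2}\bigr)\le a_{n}\,\mu\!\left(\sum_{k=1}^{n}|\nabla^{k}\nabla f|^{2}+|\nabla f|^{2}\right)\le a_{n}\,\mu\!\left(\sum_{j=2}^{n+1}|\nabla^{j}f|^{2}+|\nabla f|^{2}\right).
\]
Collecting these estimates and consolidating constants yields the conclusion at level $n+1$, with an explicit $a_{n+1}$ depending on $a_{n}$, $\varepsilon$, $E_{\varepsilon}$, $n$, and $M_{R}$.

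The main technical obstacle is the localization: because $(\text{C}\star)$ fails inside $B_{R}$, the compact piece must be treated separately via pointwise bounds on $\mathcal{V}_C$, which is where the regularity of $U$ enters. A secondary subtlety is that promoting $\nabla f$ to the role of the test function in the induction step is precisely what creates the top-order $|\nabla^{n+1}f|^{2}$ term on the right; the inductive statement must therefore be applied component-wise to the scalar fields $X_{i}f$, which is harmless since the base hypothesis \eqref{a1} is a scalar inequality that extends to vector-valued functions by summation.
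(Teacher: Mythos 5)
Your proposal follows essentially the same route as the paper: substitute $\mathcal{V}_C^{n/2}f$ into the base quadratic form bound, expand $|\nabla(\mathcal{V}_C^{n/2}f)|^2$ by Leibniz, control the middle term via $(\mathrm{C}\star)$, absorb the $\varepsilon a n^2/2$ contribution using the smallness hypothesis, and close the strong induction by applying lower-level cases to $f$ and to each $X_i f$. The only place you go beyond what the paper writes out is the localization: since $(\mathrm{C}\star)$ holds only outside $B_R$, your explicit split over $B_R$ and $B_R^c$ (with $\mathcal{V}_C$ and $\nabla\mathcal{V}_C$ bounded on the compact piece, which in fact uses continuity of $U$ through third order) supplies a detail the paper leaves implicit; this is a useful clarification rather than a different method.
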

 \textbf{Rockland Type Operators}\\
 Let $\nabla=(X_1,...,X_k)$ be subgradient defined by the generators of a nilpotent Lie algebra.
 The elementary $U$-bound in direction of a field $X_j$ reads
 \[
 \mu f^2\mathcal{V}_j\leq \mu|X_jf|^2
 \]
 where $\mathcal{V}_j\equiv \frac14|X_jU|^2-\frac12 X_j^2U$.
 If the following stronger bound holds
 \[
 \mu f^2|\mathcal{V}_j|\leq C\mu|X_jf|^2+\tilde D\mu f^2,
 \]
 then we can replace $|\mathcal{V}_j|$ by a strictly positive $\langle\mathcal{V}_j\rangle\equiv (|\mathcal{V}_j|^2+1)^\frac12$, and have a bound
 \[
 \mu f^2\langle\mathcal{V}_j\rangle\leq C\mu|X_jf|^2+D\mu f^2,
 \]
 with $D\equiv \tilde D+1$.
 This bound can now be iterated as follows. Firstly, this implies 
 \[\begin{split}
 \mu f^2\langle\mathcal{V}_j\rangle^2&\leq 2C\mu \left|X_j\left(f\langle\mathcal{V}_j\rangle^\frac12\right)\right|^2+D\mu  f^2  \langle\mathcal{V}_j\rangle \\
 &\leq 2C\mu(X_jf)^2\langle\mathcal{V}_j\rangle +2C\mu f^2\left(\frac1{2\langle\mathcal{V}_j\rangle}|\nabla \langle\mathcal{V}_j\rangle|^2\right)+D\mu f^2|\mathcal{V}_j|. 
 \end{split}
 \]
 Hence, if for some $\varepsilon\in(0,1)$  we have
 \[
  \frac1{2\langle\mathcal{V}_j\rangle}|\nabla \langle\mathcal{V}_j\rangle|^2 \leq \varepsilon \langle\mathcal{V}_j\rangle^2 +E,
 \]
then we obtain a bound
 \[
\mu f^2\langle\mathcal{V}_j\rangle^2\leq a \mu|X_j^2f|^2 + b  \mu|X_jf|^2 + c\mu f^2.
 \]
with some constants $a,b,c\in(0,\infty)$.
This implies that an iteration can be performed to get, for any $n\in\mathbb{N}$, with some constants $a_m\in(0,\infty)$, $m=0,..,n$, the following result 
 \[
 \mu f^2\mathcal{V}_j^{n} \leq \sum_{m=0}^n a_m \mu|X_j^{m}f|^2,  
 \]
 and consequently
 \[
 \mu \left(f^2\sum_{j\leq k}\mathcal{V}_j^{n} \right) \leq \sum_{m=0}^n a_m \sum_{j\leq k}\mu|X_j^{m}f|^2.  
 \]
Let us consider the following Rockland operator  of order $2n$ 
\[\textsc{R}\equiv (-1)^n\sum_{j=1,..,k} X_j^{2n}\]
A general theory of Logarithmic Sobolev type inequalities for Rockland type operators on nilpotent Lie group with a Haar measure $d\lambda$ was developed in \cite{CKR}.
Below, we discuss briefly the case of the probability measure
\[ d\mu \equiv e^{-U}d\lambda.
\]
First of all, according to \cite{RY} (and references therein), the following Sobolev inequality for the Rockland operator holds
\[\|f \|_{\mathbb{L}_q(\mathbb{G},\lambda)}\leq 
C \|\textsc{R}^\frac{a}{n} f \|_{\mathbb{L}_p(\mathbb{G},\lambda)} ,\qquad  1 < p < q < \infty, \; a = Q\left(
 \frac1{p} - \frac1{q}\right).
\]
In particular, for the Heisenberg group $\mathbb{H}_1$, with $Q=4$, we have
\[\|f \|_{\mathbb{L}_q(\mathbb{G},\lambda)}\leq 
C \|\textsc{R}^\frac{a}{n}f \|_{\mathbb{L}_2(\mathbb{G},\lambda)} ,\qquad  1 < p < q < \infty, \; a = Q\left(
 \frac1{p} - \frac1{q}\right).
\]
which for $p=2$ and $q,n=4$ reads
\[ \|f \|_{\mathbb{L}_4(\mathbb{G},\lambda)}\leq 
C \|\textsc{R}^\frac12 \|_{\mathbb{L}_2(\mathbb{G},\lambda)}.
\]
Hence, by standard arguments, see e.g. \cite{GZ}, we get
\[
\int f^2\log\frac{f^2}{\int f^2 d\lambda} d\lambda \leq
C \int |\textsc{R}^\frac{1}{n} f|^2 d\lambda + D \int |f|^2 d\lambda .
\]
Since for sufficiently smooth $f$
\[\int |\textsc{R}^\frac{1}{n} f|^2 d\lambda = \int  f\textsc{R}  f  d\lambda\leq \frac12\int  |\textsc{R}  f |^2 d\lambda +\frac12\int  | f |^2 d\lambda \]
we obtain
\[
\int f^2\log\frac{f^2}{\int f^2 d\lambda} d\lambda \leq
C' \int |\textsc{R} f|^2 d\lambda + D' \int |f|^2 d\lambda,
\]
with some $C',D'\in(0,\infty)$ independent of $f$ for which the right-hand side is well-defined.
Now, we can substitute $fe^{-\frac12U}$ in place of $f$ to get
\[
\int f^2\log\frac{f^2}{\int f^2 d\mu} d\mu
\leq
C" \int |\textsc{R} f|^2 d\mu +  \int f^2  U d\mu + A\sum_{j=1,..,k}\sum_{m=0,..,2n-1}\int |X_j^m f|^2 |X_j^{2n-m}U|^2 + D" \int |f|^2 d\mu,
\]
with some constants $C",D",A\in(0,\infty)$ independent of $f$.
This can be simplified using the quadratic form bounds developed above in this section. 
 In conclusion, we have the following result:
 \begin{thm}
 Suppose, for  $d\mu \equiv e^{-U}d\lambda$, the following quadratic form bounds are satisfied
 \[
 \mu\left( |X_j^m f|^2 |X_j^{2n-m}U|^2 \right)\leq 
 b_m \mu |\textsc{R} f|^2  +c_m \mu f^2,
 \]
 with some constants $b_m,c_m\in(0,\infty)$ independent of $f$,
then the following Logarithmic Sobolev inequality holds
\[
\mu\left( f^2\log\frac{f^2}{\mu f^2}  \right)
\leq
\tilde C\mu |\textsc{R} f|^2 + \tilde D\mu |f|^2, 
\]
with some constants $\tilde C,\tilde D\in(0,\infty)$.
\end{thm}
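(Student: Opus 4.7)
The plan is to combine the intermediate inequality derived in the paragraph immediately preceding the theorem with the hypothesised quadratic-form bounds. The preceding computation starts from the Lebesgue-measure log-Sobolev
\[
\int h^2 \log\frac{h^2}{\int h^2 d\lambda}\, d\lambda \leq C' \int|\textsc{R} h|^2 d\lambda + D' \int h^2 d\lambda,
\]
substitutes $h = fe^{-\frac12 U}$, and then expands $\textsc{R}(fe^{-\frac12 U})$ via the Leibniz rule (with Fa\`a di Bruno applied to each derivative of $e^{-\frac12 U}$). After grouping the resulting cross-terms and using $|\textsc{R}(fe^{-\frac12 U})|^2\leq $ (sum of squares) pointwise, one arrives at
\[
\mu\left(f^2 \log\frac{f^2}{\mu f^2}\right) \leq C'' \mu|\textsc{R} f|^2 + \mu(f^2 U) + A \sum_{j\leq k}\sum_{m=0}^{2n-1}\mu\bigl(|X_j^m f|^2 |X_j^{2n-m} U|^2\bigr) + D'' \mu f^2,
\]
which is precisely the displayed inequality that the excerpt exhibits just before the theorem statement.

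From this point the proof reduces to a direct application of the hypothesis. For each pair $(j,m)$ with $0\leq m\leq 2n-1$ I would invoke
\[
\mu\bigl(|X_j^m f|^2 |X_j^{2n-m} U|^2\bigr) \leq b_m \mu|\textsc{R} f|^2 + c_m \mu f^2,
\]
sum over $j\leq k$ and $m$, and collect all contributions of $\mu|\textsc{R} f|^2$ and of $\mu f^2$ into two single constants $\tilde C$ and $\tilde D$. The auxiliary term $\mu(f^2 U)$ does not fit the hypothesis verbatim, and I would handle it by combining the $m=0$ instance of the assumed bound with a mild pointwise estimate of the form $|U|\leq \alpha + \beta\sum_j |X_j^{2n}U|^2$ (or, equivalently, by the elementary $U$-bound iterated through the scheme developed earlier in this section), which yields $\mu(f^2 U)\leq b_0' \mu|\textsc{R} f|^2 + c_0' \mu f^2$ and is then absorbed into $\tilde C$ and $\tilde D$ as well.

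The step I expect to be the main obstacle is not this final bookkeeping but rather the preceding Leibniz expansion that produces the intermediate inequality. Because $\textsc{R}$ is of order $2n$, expanding $\textsc{R}(fe^{-\frac12 U})$ generates a large family of cross-terms, each of which after Fa\`a di Bruno involves a derivative $X_j^m f$ multiplied by a polynomial in derivatives of $U$ whose weighted total order equals $2n-m$. The task is to dominate each such polynomial pointwise, via Young's or H\"older's inequality, by a finite sum of monomials of the form $|X_j^{2n-m}U|^2$ up to strictly lower-order $U$-derivative terms that can be iteratively reduced by the $U$-bound argument developed above. Once that pointwise decomposition is in place, the hypothesised quadratic-form bounds close the argument cleanly with constants $\tilde C,\tilde D$ that are explicit in $C'',D'',A$, the $b_m,c_m$, the $b_0',c_0'$, and the number of generators $k$.
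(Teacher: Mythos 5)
Your proposal follows essentially the same route as the paper: take the Haar-measure logarithmic Sobolev inequality for the Rockland operator $\textsc{R}$, substitute $h = f e^{-\frac12 U}$, expand $\textsc{R}\bigl(f e^{-\frac12 U}\bigr)$ by Leibniz and Fa\`a di Bruno, group the cross-terms into expressions of the form $|X_j^m f|^2 |X_j^{2n-m}U|^2$ (plus lower-order $U$-derivative corrections), and then invoke the hypothesised quadratic-form bounds to absorb everything into $\mu|\textsc{R}f|^2$ and $\mu f^2$. The paper itself gives only the resulting intermediate inequality and the single sentence ``This can be simplified using the quadratic form bounds developed above in this section,'' so your reconstruction of the implicit steps is accurate.

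Two observations are worth recording, and you already anticipate both. First, you correctly flag that the term $\mu(f^2 U)$ produced by the substitution is \emph{not} covered verbatim by the theorem's hypothesis, whose $m=0$ instance only controls $\mu\bigl(f^2 |X_j^{2n}U|^2\bigr)$, not $\mu(f^2 U)$ itself. Your proposed remedy (a pointwise domination $|U|\leq \alpha + \beta\sum_j |X_j^{2n}U|^2$, or an appeal to the inductive $U$-bound scheme developed earlier in the section) is a reasonable way to close this; but one should note that it is a genuine additional hypothesis on $U$ rather than a corollary of the stated bounds, and the paper's own argument quietly passes over the same point. Second, you correctly observe that the Fa\`a di Bruno expansion of $X_j^{2n-m}\bigl(e^{-\frac12 U}\bigr)$ produces not just the monomial $X_j^{2n-m}U$ but a whole polynomial in lower derivatives of $U$, so the paper's displayed intermediate inequality is itself a shorthand for a larger family of cross-terms. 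Those extra terms must likewise be dominated by the hypothesised quadratic-form bounds (or by lower-order $U$-bounds from the earlier part of the section), and your proposal spells this out where the paper leaves it implicit. In short: same approach, but you make visible the two places where the paper's argument is being schematic, which is exactly the right level of care here.
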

\section{Higher Order Poincar\'e Inequality} \label{sec5}

In this section, we generalize inductive proof of higher order Poincar\'e inequalities in the Euclidean setup considered in \cite{WZ} to the wealth of nilpotent Lie groups.\\
Let $\mathbb{G}$ be
a Lie group on $\mathbb{R}^{n}$ and let $\mathfrak{g}$ be its Lie
algebra. Then $\mathbb{G}$ is called a stratified
group, or Carnot group, if $\mathfrak{g}$ admits a vector space decomposition (stratification) of the form 

\[
\mathfrak{g}=\bigoplus_{j=1}^{r}V_{j},\mathrm{\;\;\;\;\;\;such\;that\;}\begin{cases}
[V_{1},V_{i-1}]=V_{i} & 2\leq i\leq r,\\{}
[V_{1},V_{r}]=\{0\}.
\end{cases}
\]

$\mathbb{R}^{n}$ can be split as $\mathbb{R}^{n}=\mathbb{R}^{n_{1}}\times...\times\mathbb{R}^{n_{r}},$
and the dilation $\delta_{\lambda}:\mathbb{R}^{n}\rightarrow\mathbb{R}^{n}$
\[
\delta_{\lambda}(x)=\delta_{\lambda}(x^{(1)},...,x^{(r)})=(\lambda x^{(1)},\lambda^{2}x^{(2)},...,\lambda^{r}x^{(r)}),\;\;\;\;\;\;\;\;\;x^{(i)}\in\mathbb{R}^{n_{i}},
\]
 is an automorphism of the group $\mathbb{G}$ for every $\lambda>0.$

We choose a Jacobian basis of $\mathfrak{g}$ adapted to the stratification
by selecting a basis of left-invariant vector fields $X_{1}^{(1)},...,X_{n_{1}}^{(1)}$
of $V_{1},$ $X_{1}^{(2)},...,X_{n_{2}}^{(2)}$ of $V_{2}$, ...,
and $X_{1}^{(r)},...,X_{n_{r}}^{(r)}$ of $V_{r},$ such that
at the the identity $e$ of $\mathbb{G}$, we have
\[
\begin{cases}
X_{i}^{(1)}(e)=e_{i} & \;\;1\leq i\leq n_{1},\\
X_{i}^{(2)}(e)=e_{n_{1}+i} & \;\;1\leq i\leq n_{2},\\
\;\;\;\vdots\\
X_{i}^{(r)}(e)=e_{n_{1}+n_{2}+...+n_{r-1}+i} & \;\;1\leq i\leq n_{r},
\end{cases}
\]
 where  $\{e_{h}\}_{h=1,...,n}$
is the standard basis of $\mathbb{R}^{n}$, and $n=n_{1}+n_{2}+...+n_{r}.$

In particular, $X_{1}^{(1)},...,X_{n_{1}}^{(1)}$ are the Jacobian
generators of $\mathbb{G}.$ $X_{j}^{(1)}(0)=\partial/\partial x_{j}|_{0}$
for $j=1,...,n_{1}$, and 
\[
\mathrm{rank}(\mathrm{Lie}{\{X_{1}^{(1)},...,X_{n_{1}}^{(1)}\}}(x))=n\;\;\;\;\;\;\forall x\in\mathbb{R}^{n}.
\]
Relatively to the basis, we write $z=(x^{(1)},...,x^{(r)})=(x_{1}^{(1)},x_{2}^{(1)}...,x_{n_{1}}^{(1)},...,x_{1}^{(r)},x_{2}^{(r)},...,x_{n_{r}}^{(r)}),$
or also $z\equiv(z_{h})_{1\leq h\leq n},$ where such coordinates
of $z\in\mathbb{G}$ comes from the exponential first kind representation 

\[
z=\exp\left(\sum_{j=1}^{r}\sum_{i=1}^{n_{j}}x_{i}^{(j)}X_{i}^{(j)}\right).
\]
Let
\[
(\zeta_{h})_{1\leq h\leq n}:=(\xi_{i_{1}}^{(1)},\xi_{i_{2}}^{(2)},...,\xi_{i_{r}}^{(r)})_{1\leq i_{1}\leq n_{1},1\leq i_{2}\leq n_{2},...,1\leq i_{r}\leq n_{r}}\in\mathfrak{g}^{*}
\]
 denote the dual basis of 
\[
(Z_{h})_{1\leq h\leq n}:=(X_{i_{1}}^{(1)},X_{i_{2}}^{(2)},...,X_{i_{r}}^{(r)})_{1\leq i_{1}\leq n_{1},1\leq i_{2}\leq n_{2},...,1\leq i_{r}\leq n_{r}}.
\]
Then, for any $1\leq h\leq n,$ we have that
\[
z_{h}=\zeta_{h}\circ\exp^{-1}(z):=\eta_{h}(z).
\]
Hence, we have that the general monomial can be written as
\[
z^{I}:=(x_{1}^{(1)})^{i_{1}^{(1)}}(x_{2}^{(1)})^{i_{2}^{(1)}}...(x_{n_{1}}^{(1)})^{i_{n_{1}}^{(1)}}...(x_{1}^{(r)})^{i_{1}^{(r)}}(x_{2}^{(r)})^{i_{2}^{(r)}}...(x_{n_{r}}^{(r)})^{i_{n_{r}^{(r)}}},
\]
where $|I|={\displaystyle \sum_{k=1}^{r}\sum_{l=1}^{n_{k}}i_{l}^{(k)}.}$

The vector valued operator $\nabla:=(X_{1},X_{2},...,X_{n_{1}})=(X_{1}^{(1)},X_{2}^{(1)},...,X_{n_{1}}^{(1)})$
is called the sub-gradient on $\mathbb{G},$ we see that
\[
X_{j}(\eta_{i})=\begin{cases}
1 & if\;\;\;i=j\\
0 & if\;\;\;i\neq j.
\end{cases}
\]

Define $\nabla^{\beta}=X_{1}^{\beta_{1}}X_{2}^{\beta_{2}}...X_{n_{1}}^{\beta_{n_{1}}}$
and $\eta^{\alpha}=\eta_{1}^{\alpha_{1}}\eta_{2}^{\alpha_{2}}...\eta_{n_{1}}^{\alpha_{n_{1}}},$
where $\alpha=(\alpha_{1},...,\alpha_{n_{1}})$ and $\beta=(\beta_{1},...,\beta_{n_{1}}).$
For $|\alpha|=|\beta|,$
\[
\nabla^{\beta}(\eta^{\alpha})=\begin{cases}
\beta!:=\beta_{1}!\beta_{2}!...\beta_{n_{1}}! & if\;\;\;\alpha=\beta\\
0 & if\;\;\;\alpha\neq\beta.
\end{cases}
\]

Choosing $w_{\beta}={\displaystyle \frac{\eta^{\beta}}{\beta!}},$
we obtain $\nabla^{\beta}(w_{\beta})=1.$

We have the Poincar\'e inequality $\mu|f-\mu f|^{2}\leq C\mu|\nabla f|^{2},$
and we want to show the higher order Poincar\'e inequality $\mu|f-\zeta|^{2}\leq C'\mu|\nabla^{k}f|^{2},$
for $k\in\mathbb{N}.$
\[
\begin{array}{ll}
\mu|\nabla^{k}f|^{2} & =\mu\left({\displaystyle \sum_{|\alpha|=k}|}\nabla^{\alpha}f|^{2}\right)\\
 & =\mu\left({\displaystyle \sum_{|\beta|=k-1}|}\nabla\nabla^{\beta}f|^{2}\right)\\
 & ={\displaystyle \sum_{|\beta|=k-1}}\mu\left({\displaystyle |}\nabla(\nabla^{\beta}f)|^{2}\right),
\end{array}
\]

using Poincar\'e inequality on $(\nabla^{\beta}f)$ and the fact
that $\nabla^{\beta}(w_{\beta})=1,$

\[
\begin{array}{ll}
 & \geq{\displaystyle \sum_{|\beta|=k-1}\frac{1}{C}}\mu\left|\nabla^{\beta}f-\mu(\nabla^{\beta}f)\right|^{2}\\
 & ={\displaystyle \sum_{|\beta|=k-1}\frac{1}{C}}\mu\left|\nabla^{\beta}\left(f-w_{\beta}\mu(\nabla^{\beta}f)\right)\right|^{2}\\
 & ={\displaystyle \sum_{|\beta|=k-1}\frac{1}{C}}\mu\left|\nabla^{\beta}\left(f-{\displaystyle \sum_{|\gamma|=k-1}}w_{\gamma}\mu(\nabla^{\gamma}f)\right)\right|^{2}\\
 & ={\displaystyle \sum_{|\beta_{2}|=k-2}\frac{1}{C}}\mu\left|\nabla\left(\nabla^{\beta_{2}}\left(f-{\displaystyle \sum_{|\gamma|=k-1}}w_{\gamma}\mu(\nabla^{\gamma}f)\right)\right)\right|^{2}
\end{array}
\]

using Poincar\'e inequality on $f_{1}:=\nabla^{\beta_{2}}\left(f-{\displaystyle \sum_{|\gamma|=k-1}}w_{\gamma}\mu(\nabla^{\gamma}f)\right)$
and the fact that $\nabla^{\beta_{2}}(w_{\beta_{2}})=1,$

\[
\begin{array}{cl}
 & \geq{\displaystyle \sum_{|\beta_{2}|=k-2}\frac{1}{C^{2}}}\mu\left|\nabla^{\beta_{2}}\left(f-{\displaystyle \sum_{|\gamma|=k-2}}w_{\gamma}\mu(\nabla^{\gamma}f)\right)\right|^{2}\\
 & \vdots\\
 & \geq{\displaystyle \frac{1}{C^{k}}\mu(f-\zeta_{k}),}
\end{array}
\]

where $\zeta_{k}$ is a polynomial depending on $f,k,$ and $\mu.$
\subsection{
Dual Polynomials for the Heisenberg group}

The lie algebra $\mathfrak{g}$ of the Heisenberg group $\mathbb{H}^{n}$
is spanned by the following left-invariant vector fields:
\[
X_{i}=\partial_{x_{i}}+\frac{(-1)^{i}}{2}x_{2n-i+1}\partial_{t},\;\;\;for\;\;i=1,...,2n,\;\;\;and\;\;T=\partial_{t}.
\]
The dual space of $\mathfrak{g}$ is denoted by $\Lambda^{1}\mathfrak{g}.$
The basis of $\Lambda^{1}\mathfrak{g},$ dual to the basis $\{W_{1},...,W_{2n+1}\}:=\{X_{1},...,X_{2n,}T\},$
is the family of covectors 
\[
\{\zeta_{1},...,\zeta_{2n+1}\}:=\bigg\{ dx_{1},...,dx_{2n},dt-\frac{1}{2}\sum_{i=1}^{n}\bigg(x_{i}dx_{2n-i+1}+(-1)^{i}x_{2n-i+1}dx_{i}\bigg)\bigg\}.
\]
We denote by $\langle.,.\rangle$ the inner product in $\Lambda^{1}\mathfrak{g}$
that makes $\{\zeta_{1},...,\zeta_{2n+1}\}$ an orthonormal basis.
We note that, for $1\leq h,i\leq2n+1,$
\begin{equation}
\langle\zeta_{h},W_{i}\rangle=\begin{cases}
1 & if\;\;h=i\\
0 & if\;\;h\neq i,
\end{cases}\label{eq:10}
\end{equation}
(see Section 1.2 of \cite{key-4}). Let $z:=(x_{1},...,x_{2n},t)\in\mathbb{G}.$
Using the exponential of first-kind representation, 

\[
z=\exp\left(\sum_{i=1}^{2n}x_{i}X_{i}+tT\right).
\]
For any $1\leq h\leq2n+1,$ we define
\begin{equation}
\eta_{h}(z):=\zeta_{h}\circ\exp{}^{-1}(z).\label{eq:11}
\end{equation}
We can write (\ref{eq:11}) explicitly using (\ref{eq:10}), so 
\begin{equation}
\eta_{h}(z)=\begin{cases}
x_{h} & if\;\;1\leq h\leq2n\\
t & if\;\;h=2n+1.
\end{cases}\label{eq:12}
\end{equation}
For the Poincar\'e downhill induction, we are interested in
\begin{equation}
\eta^{\alpha}:=\eta_{1}^{\alpha_{1}}\eta_{2}^{\alpha_{2}}...\eta_{2n}^{\alpha_{2n}},\label{eq:13}
\end{equation}
where $\alpha=(\alpha_{1},...,\alpha_{2n})$ and $|\alpha|={\displaystyle \sum_{i=1}^{2n}\alpha_{i}.}$
Using (\ref{eq:12}), we can explicitly write (\ref{eq:13}): 
\begin{equation}
\eta^{\alpha}(z)=\eta_{1}^{\alpha_{1}}(z)\eta_{2}^{\alpha_{2}}(z)...\eta_{2n}^{\alpha_{2n}}(z)=x_{1}^{\alpha_{1}}x_{2}^{\alpha_{2}}...x_{2n}^{\alpha_{2n}}.\label{eq:14}
\end{equation}
Let $\nabla^{\beta}:=X_{1}^{\beta_{1}}X_{2}^{\beta_{2}}...X_{2n}^{\beta_{2n}},$
where $\beta=(\beta_{1},...,\beta_{2n}).$ For $|\alpha|=|\beta|,$
by (\ref{eq:14}),
\[
\nabla^{\beta}(\eta^{\alpha})=\nabla^{\beta}(x_{1}^{\alpha_{1}}x_{2}^{\alpha_{2}}...x_{2n}^{\alpha_{2n}})=\begin{cases}
\beta!:=\beta_{1}!\beta_{2}!...\beta_{n_{1}}! & if\;\;\;\alpha=\beta\\
0 & if\;\;\;\alpha\neq\beta.
\end{cases}
\]

\section{Adams' Regularity Condition in the Heisenberg Group}\label{sec6}

In this section we consider the general Heisenberg group $\mathbb{H}^{n}$, that
is a group isomorphic to $\mathbb{R}^{2n+1},$ with the group operation
is defined by
\[
(x,z)\circ(x',z')=\left(x_{i}+x'_{i},z+z'+\frac{1}{2}\langle\Lambda x,x'\rangle\right)_{i=1,...,2n},
\]
 where $x=(x_{1},x_{2},...,x_{2n})\in\mathbb{R}^{2n},$ $x'=(x'_{1},x'_{2},...,x'_{2n})\in\mathbb{R}^{2n},$
and $z,z'\in\mathbb{R},$ and
\[
\Lambda=\mathrm{diag}\left\{ \left(\begin{array}{cc}
0 & -1\\
1 & 0
\end{array}\right),...,\left(\begin{array}{cc}
0 & -1\\
1 & 0
\end{array}\right)\right\} .
\]
The Lie algebra $\mathfrak{g}$ of the Heisenberg group $\mathbb{H}^{n}$ is spanned
by the following left-invariant vector fields: 
\[
X_{i}=\frac{\partial}{\partial x_{i}}+\frac{(-1)^{i}}{2}x_{2n-i+1}\frac{\partial}{\partial z},\;\;\;for\;\;i=1,...,2n,\;\;\;and\;\;Z=\frac{\partial}{\partial z}.
\]
 The sub-gradient is defined as $\nabla_{\mathbb{H}^{n}}=(X_{1},X_{2},...,X_{2n}),$
and the sub-Laplacian is defined as $\Delta={\sum_{i=1}^{2n}X_{i}^{2}.}$ 

We consider examples of $U$ defined as a function of a
(smooth) distance from the neutral element provided by the following
Kaplan Norm 
\[
{N=\left(|x|^{4}+16z^{2}\right)^{\frac{1}{4}}.}
\]
 where 
\[
|x|^{2}\equiv\sum_{i=1}^{2n}x_{i}^{2}
\]
 Then we have the following relations 
\[
X_{i}N=\frac{|x|^{2}x_{i}+(-1)^{i}x_{2n-i+1}4z}{N^{3}}
\]

if $j\neq2n-i+1$

\[
\begin{split}X_{j}X_{i}N=-\frac{3\left(|x|^{2}x_{i}+(-1)^{i}x_{2n-i+1}4z\right)\left(|x|^{2}x_{j}+(-1)^{j}x_{2n-j+1}4z\right)}{N^{7}}\\
+\frac{2x_{j}x_{i}+2(-1)^{i+j}x_{2n-j+1}x_{2n-i+1}}{N^{3}},
\end{split}
\]

and 
\[
X_{2n-i+1}X_{i}N=-\frac{3\left(|x|^{2}x_{i}+(-1)^{i}x_{2n-i+1}4z\right)\left(|x|^{2}x_{2n-i+1}-(-1)^{i}x_{i}4z\right)}{N^{7}}+\frac{4(-1)^{i}z}{N^{3}},
\]

\[
X_{i}X_{2n-i+1}N=-\frac{3\left(|x|^{2}x_{i}+(-1)^{i}x_{2n-i+1}4z\right)\left(|x|^{2}x_{2n-i+1}-(-1)^{i}x_{i}4z\right)}{N^{7}}-\frac{4(-1)^{i}z}{N^{3}}.
\]

Adams' regularity condition states that $\exists$ $\epsilon,C\in(0,\infty)$
such that
\begin{equation}
\sum_{|\alpha|=2}|\nabla^{\alpha}U|\leq C(1+|\nabla U|)^{2-\epsilon}.\label{eq:a}
\end{equation}

The left hand side of (\ref{eq:a}) is given by 
\[
\begin{split}\sum_{|\alpha|=2}|\nabla^{\alpha}U| & =\sum_{|\alpha|=2}|U'\nabla^{\alpha}N+U"(\nabla^{\alpha_{1}}N\cdot\nabla^{\alpha_{2}}N)|\\
 & \leq|U'(N)|\left(\sum_{i=1}^{2n}|X_{2n-i+1}X_{i}N|+\sum_{i=1}^{2n}|X_{i}X_{2n-i+1}N|+2\sum_{i=1}^{2n}\sum_{j=1_{,}j\neq2n-i+1}^{2n}|X_{j}X_{i}N|\right)\\
 & +|U"||\nabla N|^{2}
\end{split}
\]
 Since $|x|^{2}\leq N^{2},$ $|z|\leq N^{2},$ and $|x_{i}|\leq N\;\;\forall\;\;1\leq i\leq2n,$
then 
\begin{equation}
\sum_{|\alpha|=2}|\nabla^{\alpha}N|\leq\frac{\tilde{C}}{N}.\label{eq:b}
\end{equation}
 On the other hand, the right-hand side of (\ref{eq:a}) is 
\[
C(1+|\nabla U|)^{2-\epsilon}=C\left(1+|U'(N)|\left(\sum_{i=1}^{2n}\frac{\left|x|^{2}x_{i}+(-1)^{i}x_{2n-i+1}4z\right)^{2}}{N^{6}}\right)^{\frac{1}{2}}\right)^{2-\epsilon}
\]
 
\[
=C\left(1+|U'(N)|\left(\sum_{i=1}^{2n}\frac{|x|^{4}x_{i}^{2}+16x_{2n-i+1}^{2}z^{2}}{N^{6}}\right)^{\frac{1}{2}}\right)^{2-\epsilon}
\]
 
\[
=C\left(1+|U'(N)|\frac{|x|}{N}\right)^{2-\epsilon}
\]
 So, using (\ref{eq:b}), outside the unit ball $\{N<1\},$ we see
that Adams' regularity condition (\ref{eq:1}) cannot be satisfied in a thin region along the $z-$ axis.
Summarizing, we have the following result:
\begin{thm}
Let $\mathbb{H}^n$ be Heisenberg group and let 
${N=\left(|x|^{4}+16z^{2}\right)^{\frac{1}{4}}}$
be the corresponding Kaplan norm.
Then, the interaction $U\equiv U(N)$ given as smooth function of the Kaplan norm $N$ does not satisfy
Adams' regularity condition.\\
\end{thm}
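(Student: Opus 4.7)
The strategy is to evaluate both sides of Adams' condition \eqref{eq:a} along the $z$-axis $\{x=0\}$, the characteristic direction of the Heisenberg sub-gradient, and exhibit a contradiction outside the unit ball. First, using the formula $X_iN=(|x|^2x_i+4(-1)^ix_{2n-i+1}z)/N^3$ together with the cancellation $\sum_i(-1)^i x_ix_{2n-i+1}=0$ (pairing $i$ with $2n-i+1$), I would verify
\[
|\nabla N|^2 \;=\; \sum_{i=1}^{2n}|X_iN|^2 \;=\; \frac{|x|^6+16|x|^2z^2}{N^6} \;=\; \frac{|x|^2}{N^2},
\]
so that $|\nabla U|=|U'(N)|\,|x|/N$ vanishes on $\{x=0\}$ and the right-hand side of \eqref{eq:a} collapses to the constant $C$ there.

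Next, expanding $X_jX_iU=U''(N)(X_iN)(X_jN)+U'(N)X_jX_iN$ and specializing to $x=0$, the $U''$ contribution drops out because $X_iN=0$ there. Inspection of the second-derivative formulas recorded above then shows that on the $z$-axis the only surviving terms are the skew derivatives $X_{2n-i+1}X_iN$ and $X_iX_{2n-i+1}N$, each equal to $\pm 4(-1)^iz/N^3$; the diagonal $X_i^2N$ and the non-skew mixed derivatives $X_jX_iN$ with $j\notin\{i,2n-i+1\}$ all vanish, since every term in their explicit expressions carries a factor of some $x_k$. Substituting the on-axis identity $|z|=N^2/4$ (from $N^4=16z^2$) yields, for every $i$, $|X_{2n-i+1}X_iU|=|U'(N)|/N$, and hence
\[
\sum_{|\alpha|=2}|\nabla^\alpha U| \;\geq\; \frac{|U'(N)|}{N} \qquad\text{on } \{x=0\}.
\]

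Combining the two observations, Adams' condition \eqref{eq:a} restricted to the $z$-axis with $N>1$ would force the one-variable bound $|U'(N)|\leq C'N$ for every $N>1$, i.e.\ at most linear growth of $U'$. This fails for any $U=U(N)$ in the growth regime relevant to coercive inequalities on Heisenberg groups, for instance the prototypical choice $U(N)=\beta N^p$ with $p>2$ used in \cite{BDZ,HZ,BDZhQZ}, so \eqref{eq:a} cannot hold. The main point requiring care is the second step, where one must confirm that no cancellation among the surviving skew second derivatives can destroy the lower bound; since a single pair $(i,2n-i+1)$ already contributes $|U'(N)|/N$ in absolute value, the argument is robust, and the precise aggregation only improves the constant.
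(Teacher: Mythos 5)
Your proof is correct and follows essentially the same route as the paper: evaluate both sides of Adams' condition on the $z$-axis, where $|\nabla N|=|x|/N$ vanishes while the skew second derivatives $X_{2n-i+1}X_iN=4(-1)^iz/N^3$ have magnitude exactly $1/N$, so the condition would force $|U'(N)|\leq C'N$ there. You carry this out more explicitly than the paper (which stops after asserting that the condition ``cannot be satisfied in a thin region along the $z$-axis''), and you rightly flag that the contradiction requires $|U'(N)|/N$ to be unbounded, e.g.\ $U=N^{p}$ with $p>2$ --- a growth hypothesis the theorem statement leaves implicit but which both arguments actually need.
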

\section{
Adams' Regularity Condition for Dual Polynomials}
\label{sec7}
In this section, we consider interactions given in terms of dual polynomials and show that they fail Adams' regularity condition.\\
Let $\mathbb{G}$ be a Carnot group with $n$ generators $X_{1},X_{2},...,X_{n},$
subgradient $\nabla_{\mathbb{}}:=(X_{1},X_{2},...,X_{n}),$
and $\nabla^{\beta}=X_{1}^{\beta_{1}}X_{2}^{\beta_{2}}...X_{n_{1}}^{\beta_{n_{1}}}$
for $\beta=(\beta_{1},...,\beta_{n_{1}}).$ We define the dual polynomials
$\eta^{\alpha}=\eta_{1}^{\alpha_{1}}\eta_{2}^{\alpha_{2}}...\eta_{n_{1}}^{\alpha_{n_{1}}}$
with $\alpha=(\alpha_{1},...,\alpha_{n_{1}})$ as in Section \ref{sec3}, and
we have
\begin{equation}
X_{j}(\eta_{i})=\begin{cases}
1 & if\;\;\;i=j\\
0 & if\;\;\;i\neq j.
\end{cases}\label{eq:c1}
\end{equation}

For $|\alpha|=|\beta|,$ 
\[
\nabla^{\beta}(\eta^{\alpha})=\begin{cases}
\beta!:=\beta_{1}!\beta_{2}!...\beta_{n_{1}}! & if\;\;\;\alpha=\beta\\
0 & if\;\;\;\alpha\neq\beta.
\end{cases}
\]

Let $U:[0,\infty)\rightarrow [0,\infty)$ be a twice-differentiable and increasing function. Adams' regularity condition requires that $\exists$ $\epsilon,C\in(0,\infty)$
such that 
\begin{equation}
\sum_{|\alpha|=2}|\nabla^{\alpha}U|\leq C(1+|\nabla U|)^{2-\epsilon}.\label{eq:a}
\end{equation}

By (\ref{eq:c1}), we have that 
\begin{equation}\begin{split}
X_{j}U(\eta^{\alpha})&=U'(\eta^{\alpha})X_{j}(\eta^{\alpha})\\
&=U'(\eta^{\alpha})X_{j}(\eta_{1}^{\alpha_{1}}\eta_{2}^{\alpha_{2}}...\eta_{n_{1}}^{\alpha_{n_{1}}})
\\
&=\alpha_{j}U'(\eta^{\alpha})\eta_{1}^{\alpha_{1}}\eta_{2}^{\alpha_{2}}...\eta_{j}^{\alpha_{j}-1}..\eta_{n_{1}}^{\alpha_{n_{1}}}.\label{eq:r1}
\end{split}
\end{equation}

Hence, 
\[
|\nabla U|=|U'(\eta^{\alpha})|\bigg(\sum_{j=1}^{n_{1}}\left(\alpha_{j}\frac{\eta^{\alpha}}{\eta_{j}}\right)^{2}\bigg)^{\frac{1}{2}}.
\]

Also, using (\ref{eq:r1}),
\[
X_{i}X_{j}U(\eta^{\alpha})=X_{i}\bigg(\alpha_{j}U'(\eta^{\alpha})\eta_{1}^{\alpha_{1}}\eta_{2}^{\alpha_{2}}...\eta_{j}^{\alpha_{j}-1}..\eta_{n_{1}}^{\alpha_{n_{1}}}\bigg).
\]

Using (\ref{eq:c1}),
\[
X_{i}X_{j}U(\eta^{\alpha})=\begin{cases}
\alpha_{i}\alpha_{j}\frac{\eta^{\alpha}}{\eta_{i}\eta_{j}}U'(\eta^{\alpha})+\alpha_{i}\alpha_{j}\frac{(\eta^{\alpha})^{2}}{\eta_{i}\eta_{j}}U''(\eta^{\alpha}) & if\;\;i\neq j\\
\alpha_{j}(\alpha_{j}-1)\frac{\eta^{\alpha}}{\eta_{j}^{2}}U'(\eta^{\alpha})+\alpha_{j}^{2}\frac{(\eta^{\alpha})^{2}}{\eta_{j}^{2}}U''(\eta^{\alpha}) & if\;\;i=j
\end{cases}.
\]

Therefore,
\[
\sum_{|\alpha|=2}|\nabla^{\alpha}U|=\sum_{j=1}^{n_{1}}\bigg|\alpha_{j}(\alpha_{j}-1)\frac{\eta^{\alpha}}{\eta_{j}^{2}}U'(\eta^{\alpha})+\alpha_{j}^{2}\frac{(\eta^{\alpha})^{2}}{\eta_{j}^{2}}U''(\eta^{\alpha})\bigg|+\sum_{i=1}^{n_{1}}\sum_{j=1,j\neq i}^{n_{1}}\bigg|\alpha_{i}\alpha_{j}\frac{\eta^{\alpha}}{\eta_{i}\eta_{j}}U'(\eta^{\alpha})+\alpha_{i}\alpha_{j}\frac{(\eta^{\alpha})^{2}}{\eta_{i}\eta_{j}}U''(\eta^{\alpha})\bigg|
\]
\[
\leq\sum_{j=1}^{n_{1}}|U'(\eta^{\alpha})|\bigg|\alpha_{j}(\alpha_{j}-1)\frac{\eta^{\alpha}}{\eta_{j}^{2}}\bigg|+\sum_{i=1}^{n_{1}}\sum_{j=1,j\neq i}^{n_{1}}|U'(\eta^{\alpha})|\bigg|\alpha_{i}\alpha_{j}\frac{\eta^{\alpha}}{\eta_{i}\eta_{j}}\bigg|+\sum_{i=1}^{n_{1}}\sum_{j=1}^{n_{1}}|U''(\eta^{\alpha})|\bigg|\alpha_{i}\alpha_{j}\frac{(\eta^{\alpha})^{2}}{\eta_{i}\eta_{j}}\bigg|.
\]
Hence, Adams' regularity condition is not satisfied. 
%
%
%
\section{Towards Global Regularity} \label{sec8}
Let $d\mu\equiv e^{-U}d\lambda,$ and
define the fields
\[
V_j\equiv V(X_j,U)\equiv X_j -\frac12 X_jU,
\]
with a first-order differential operator $X_j$.
We have that
\[\tag{*}
\mu (fV_jg)= - \mu ((V_jf)g).
\]
We also have that
\[\begin{split}
    [V_j,V_k]&= [X_j -\frac12 X_jU, X_k -\frac12 X_kU]\\
    &=
 [X_j,X_k]-\frac12[X_j,X_kU] -\frac12 [X_jU, X_k]\\
 &=
  [X_j,X_k] -\frac12\left(X_j X_kU- X_kX_jU\right)
  =[X_j,X_k] -\frac12 [X_j,X_k] U \equiv V_{j,k}.
\end{split}
\]
That is, we have the following commutator property
 \[\tag{**}
 [V_j,V_k] 
  =[X_j,X_k] -\frac12 [X_j,X_k] U \equiv V_{j,k}.
\]
Thus, if $X_j$, $j=1,..,k$ generate a nilpotent Lie algebra, then, the corresponding fields $V_j\equiv V(X_j,U)$,  $j=1,..,k$, generate a nilpotent Lie algebra of the same structure. \\
Define the following operators
\[\mathfrak{L}\equiv -\sum_{j=1,..,k} V_j^2\]
and 
\[\Theta\equiv -\sum_{\alpha} V_\alpha^2,\]
where $\alpha\equiv (j_1,j_2,..,j_l)$, $l\in\mathbb{N}$.
Define also
\[\Lambda^s\equiv (1+\Theta)^{s/2}.\]

\noindent In the above setup, we have the question of equivalence of norms given by the following expressions
$\|\mathfrak{L}^mf\|_2^2$ 
and
$\sum_{j_1,..,j_n\leq k}\|V_{j_1}..V_{j_n}f\|_2^2$ .

\noindent For example, in case $m=1$, we have
\[\begin{split}
   \|\mathfrak{L} f\|_2^2 &= \sum_{j,l\leq k}(V_j^2f, V_l^2f)=-\sum_{j,l\leq k}(V_j f, V_jV_l^2f)\\
   &=\sum_{j,l\leq k}(V_lV_j f, V_jV_l f) -\sum_{j,l\leq k}(V_j f, [V_j,V_l]V_lf)\\
   &=\sum_{j,l\leq k}(V_j V_l f, V_jV_l f) +\sum_{j,l\leq k}([V_l,V_j] f, V_jV_l f)-\sum_{j,l\leq k}(V_j f, [V_j,V_l]V_lf)\\
   &=\sum_{j,l\leq k}(V_j V_l f, V_jV_l f) +\frac{1}{2}\sum_{j,l\leq k}\left(([V_l,V_j] f, V_jV_l f) + ([V_j,V_l] f, V_lV_j f)\right)\\
   &-\sum_{j,l\leq k}(V_j f, [V_j,V_l]V_lf).\\
\end{split}\]
Since
\[\begin{split}
&\frac{1}{2}\sum_{j,l\leq k}\left(([V_l,V_j] f, V_jV_l f) + ([V_j,V_l] f, V_lV_j f)\right) = \frac{1}{2}\sum_{j,l\leq k}\left(([V_l,V_j] f, V_jV_l f) - ([V_l,V_j] f, V_lV_j f)\right)\\
&=-\frac{1}{2}\sum_{j,l\leq k}   ([V_l,V_j] f, [V_l,V_j]f), 
\end{split}\]
we get 
 \[
  \|\mathfrak{L} f\|_2^2 +\frac{1}{2}\sum_{j,l\leq k}   ([V_l,V_j] f, [V_l,V_j]f)  = \sum_{j,l\leq k}(V_j V_l f, V_jV_l f) -\sum_{j,l\leq k}(V_j f, [V_j,V_l]V_lf).
 \]
  \textbf{The case of nilpotent Lie algebras of order two} \\
 If the algebra is of order two, then
 $[V_j,V_l]$ would be in the center, and we would have
\[\begin{split}
-\sum_{j,l\leq k}(V_j f, [V_j,V_l]V_lf)&=
 \sum_{j,l\leq k}(V_lV_j f, [V_j,V_l]f)=
 \frac12\sum_{j,l\leq k}\left((V_lV_j f, [V_j,V_l]f) +(V_jV_lf, [V_l,V_j]f)\right)\\
 &=
 \frac12\sum_{j,l\leq k}\left((V_lV_j f, [V_j,V_l]f) - (V_jV_lf, [V_j,V_l]f)\right)
 \\
 &=
 -\frac12\sum_{j,l\leq k}  ([V_j,V_l]f, [V_j,V_l]f) .
 \end{split}\]

We have the following result:
 \begin{prop}
 If the nilpotent Lie algebra is of second order, then
 \[
  \|\mathfrak{L} f\|_2^2 + \sum_{j,l\leq k}   ([V_l,V_j] f, [V_l,V_j]f)  = \sum_{j,l\leq k}(V_j V_l f, V_jV_l f),  
 \]
and hence we have the following relation of seminorms 
 \[
  \|\mathfrak{L} f\|_2^2   \leq  \sum_{j,l\leq k}(V_j V_l f, V_jV_l f) .
 \]
 \end{prop}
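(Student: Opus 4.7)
The plan is to start directly from the identity that has just been derived in the display immediately preceding the proposition, namely
\[
 \|\mathfrak{L} f\|_2^2 +\tfrac{1}{2}\sum_{j,l\leq k}\bigl([V_l,V_j] f, [V_l,V_j]f\bigr)  = \sum_{j,l\leq k}(V_j V_l f, V_jV_l f) -\sum_{j,l\leq k}(V_j f, [V_j,V_l]V_lf),
\]
which used only the integration-by-parts rule (*) and the commutator identity (**). What remains is to rewrite the last ``triple product'' term under the hypothesis that the algebra generated by $V_1,\dots,V_k$ is 2-step nilpotent, i.e. every $[V_j,V_l]$ lies in the center. Under this hypothesis, $[V_j,V_l]$ commutes with all $V_m$ and, since each $V_j$ is antisymmetric by (*), the operator $[V_j,V_l]=V_jV_l-V_lV_j$ is antisymmetric as well.

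Next I would manipulate the triple product $-\sum_{j,l}(V_jf,[V_j,V_l]V_lf)$. Since $[V_j,V_l]$ is central it commutes with $V_l$, and by antisymmetry of $[V_j,V_l]$ (or of $V_j$, $V_l$ separately) we can integrate by parts to bring it to the form $\sum_{j,l}(V_lV_jf,[V_j,V_l]f)$. Then I would symmetrize this in $(j,l)$: writing it as $\tfrac12\sum_{j,l}\bigl((V_lV_jf,[V_j,V_l]f)+(V_jV_lf,[V_l,V_j]f)\bigr)$ and using $[V_l,V_j]=-[V_j,V_l]$ gives $-\tfrac12\sum_{j,l}([V_j,V_l]f,[V_j,V_l]f)$. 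This is exactly the computation already carried out in the display preceding the proposition, so essentially the proof is a matter of combining that display with the earlier one.

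Plugging this value of the triple product back into the earlier identity collapses the two commutator-squared terms into a single one with coefficient $1$:
\[
 \|\mathfrak{L} f\|_2^2 + \sum_{j,l\leq k}\bigl([V_l,V_j] f, [V_l,V_j]f\bigr) = \sum_{j,l\leq k}(V_j V_l f, V_jV_l f),
\]
which is the equality claimed in the proposition. The inequality is then immediate, since the commutator-squared term on the left is a sum of nonnegative quantities and can be dropped.

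The only delicate point, and the one I would double-check carefully, is the legitimacy of moving $[V_j,V_l]$ across $V_l$ and across the inner product: it relies on $[V_j,V_l]$ being central (so that it commutes with $V_l$) and on $[V_j,V_l]$ being antisymmetric with respect to $\mu$. Both are automatic from 2-step nilpotency together with the general identity (*); once these are in place the rest is purely algebraic symmetrization. I do not anticipate any analytic obstruction beyond the implicit assumption that $f$ is smooth enough for the repeated applications of (*) to be justified, e.g.\ $f$ in a suitable Schwartz-type core for the operators $V_j$.
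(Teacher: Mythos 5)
Your proposal reproduces the paper's argument exactly: starting from the previously derived identity, you use centrality of $[V_j,V_l]$ in the 2-step case to commute it past $V_l$, integrate by parts, symmetrize in $(j,l)$ to convert the triple term into $-\tfrac12\sum([V_j,V_l]f,[V_j,V_l]f)$, and combine the two $\tfrac12$-coefficient commutator terms into one. This is precisely the computation displayed in the paper immediately before the proposition, so the approach and the details coincide.
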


 Next, we remark that
 \[
  \|\mathfrak{L} f\|_2^2= (f,\mathfrak{L}^2f),
 \]
and, using the fact that  $\mathfrak{L}$ is a non-negative self-adjoint operator,  by spectral theory, we have that
\[ (f,\mathfrak{L}f)\leq (f,\mathfrak{L}^2f)+\|f\|^2.\]
Recalling the definition
\[ \Theta\equiv -\sum_\alpha V_\alpha^2 =  \frak{L}- \sum_{j,l\leq k}[V_j, V_l]^2,
\]
we consequently get
\begin{equation} \label{Theta.1}
    (f, \Theta f)\leq \sum_{j,l\leq k}(V_j V_l f, V_jV_l f) +\|f\|^2.
\end{equation}  
Whence, we conclude with the following result for a Sobolev-type norm associated to the operator $\Lambda_1 \equiv (1+\Theta)^\frac12$:
\begin{prop}
If the nilpotent Lie algebra is of second order, then
\[ \|\Lambda_1 f\|_2^2\leq 2\left(\sum_{j,l\leq k}(V_j V_l f, V_jV_l f) +\|f\|_2^2\right).\]
\end{prop}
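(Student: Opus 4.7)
The plan is to deduce the bound directly from the already-established inequality \eqref{Theta.1}, using only the definition of $\Lambda_1$ and elementary spectral calculus. No new technical ingredient is required beyond what is displayed immediately above the statement.

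First, I would verify that $\Theta$ is a non-negative self-adjoint operator on $\mathbb{L}_2(\mu)$, at least on a natural dense domain such as smooth compactly supported functions. This follows from the integration-by-parts identity (*), which gives $V_j^* = -V_j$, so each length-one summand in the definition of $\Theta$ is of the form $-V_j^2 = V_j^* V_j \geq 0$; the higher-order summands contribute analogous non-negative quadratic forms. Consequently $\Lambda_1 \equiv (1+\Theta)^{1/2}$ is well defined via the spectral theorem and satisfies the identity
\[ \|\Lambda_1 f\|_2^2 \;=\; (f,(1+\Theta)f) \;=\; \|f\|_2^2 + (f,\Theta f). \]

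Next, I would plug into the right-hand side the bound \eqref{Theta.1} established just above the statement (which used the previous proposition and the spectral inequality $(f,\mathfrak{L}f)\leq (f,\mathfrak{L}^2 f)+\|f\|^2$), namely
\[ (f,\Theta f) \;\leq\; \sum_{j,l\leq k}(V_j V_l f,\, V_j V_l f) + \|f\|_2^2. \]
Combining the two displays gives
\[ \|\Lambda_1 f\|_2^2 \;\leq\; 2\|f\|_2^2 + \sum_{j,l\leq k}(V_j V_l f,\, V_j V_l f) \;\leq\; 2\left( \sum_{j,l\leq k}(V_j V_l f,\, V_j V_l f) + \|f\|_2^2 \right), \]
which is the claimed estimate.

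I do not expect a genuine obstacle: the whole content of the proposition is already packaged in \eqref{Theta.1}, and the assumption that the algebra is of second order enters only through its use in proving the preceding proposition (via the fact that each $[V_l,V_j]$ lies in the center). The only point requiring a little care is the functional-analytic step of making sure the identity $\|\Lambda_1 f\|_2^2 = \|f\|_2^2 + (f,\Theta f)$ extends to all $f$ for which the right-hand side of the bound is finite; this is handled in the standard way by a density argument from smooth compactly supported functions, using the closability of $\Theta$ inherited from the skew-adjointness of the $V_j$'s.
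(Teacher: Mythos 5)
Your proposal is correct and follows the paper's own route: the proposition is just the one-line consequence of the bound \eqref{Theta.1} via the identity $\|\Lambda_1 f\|_2^2 = \|f\|_2^2 + (f,\Theta f)$, with the second-order hypothesis having already been consumed in establishing \eqref{Theta.1} and the preceding proposition. The only remark is that the paper leaves this last step implicit, while you spell it out together with the (standard) self-adjointness and density considerations; no difference in substance.
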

This is a regularity statement, in the sense that the right-hand side includes only the generators of the algebra, while the left-hand side includes all directions in the second strata.\\

\noindent \textbf{Explicit representations}: \\
We will use the above results for a discussion of comparison of seminorms defined in terms of original fields $X_j$,  $j=1,..,k.$  We have
	\[ \sum_{j,l\leq k}(V_j V_l f, V_jV_l f) 	=  \sum_{j,l\leq k}\mu((X_j-\frac12X_jU)(X_l-\frac12X_lU) f, (X_j-\frac12X_jU)(X_l-\frac12X_lU) f), 
	\]
	with  
	\[V_j V_l f=  (X_j-\frac12X_jU)(X_l-\frac12X_lU)f= X_jX_lf-\frac12(X_jU)X_lf -\frac12(X_lU)X_jf  +\frac{1}{4}(X_j U) (X_l U)f-\frac12(X_jX_lU)f.
	\]
	Hence,
	\begin{equation} \label{Theta.2}
 \begin{split} \sum_{j,l\leq k}(V_j V_l f, V_jV_l f) 	&\leq 
	C	\sum_{j,l\leq k}\mu (X_jX_lf,X_jX_lf) + C	\sum_{j,l\leq k}\mu\left( |X_lf|^2
\left( \frac14 (X_jU)^2\right)\right) \\
&\qquad + C	\sum_{j,l\leq k} \frac1{16} \mu\left( 
 (X_lU X_jU)^2f^2\right)
+ C	\sum_{j,l\leq k} \frac14 \mu\left( 
 (X_lX_jU)^2f^2\right)\\
 &= C\mu|\nabla\nabla f|^2 +C \mu \left( |\nabla f|^2\left(\frac14|\nabla U|^2  \right)\right) +
 C \mu \left(\frac1{16}  \left( |\nabla U|^2  \right)^2 f^2\right)
 +C  \mu \left(  
 \frac14|\nabla\nabla U|^2   f^2\right),
\end{split}	
\end{equation} 
where $\nabla$ and $\Delta$ denote the sub-gradient and the sub-Laplacian, respectively, with respect to the generating fields $X_j$, $j=1,..,k$. 
	For the second term on the right-hand side,
 with
	\[\mathcal{V}\equiv \frac14|\nabla U|^2 -\frac12\Delta U,\] 
 by the usual $U$-bound, \cite{HZ}, we get
	\begin{equation} \label{Theta.3} \mu \left( |\nabla f|^2\left(\frac14|\nabla U|^2  \right)\right) = \mu \left( |\nabla f|^2 \mathcal{V} \right)  + \mu \left( |\nabla f|^2 \frac{1}{2}\Delta{U} \right)\leq C_1\mu |\nabla \nabla f|^2 + C_2 \mu |\nabla f|^2+  \mu \left( |\nabla f|^2 \frac{1}{2}\Delta{U} \right).
	\end{equation}
	The last term on the right-hand side contains the factor $\frac{1}{2}\Delta{U},$ which is potentially of lower order and could be bounded by $\tilde{C}\; \mu|\nabla\nabla f|^2,$ for some constant $\tilde {C} \in (0,\infty )$.
	To bound the last term on the right-hand side, generally, we need some additional assumptions 
	on $U$. (As shown in Section \ref{sec6}, generally Adams' regularity condition may not hold. )\\
 
	\noindent Next, we note that
	\begin{equation} \label{Theta.4} 
 \begin{split}
	    (f,\Theta f) &= (f,\mathfrak{L} f) +\|(Z-\frac12ZU)f\|^2 = (f, (-Lf) )
	- \mu(f^2\mathcal{V})   +\|(Z-\frac12ZU)f\|^2\\
 &= (f, (-L-L_Z)f)  
	- \mu(f^2\mathcal{V})   - \mu(f^2\mathcal{V}_Z), 
	\end{split}
 \end{equation}
	where $L$ and $L_Z$ denote the Dirichlet operator associated to the sub-gradient and $Z$ fields (in the second strata), respectively, and $\mathcal{V}$ and 
 $\mathcal{V}_Z\equiv \frac{1}{4} |\nabla_Z U|^2-\frac12\Delta_Z U$ denote their  corresponding potentials, respectively.
 Combining \eqref{Theta.1}-\eqref{Theta.4}, we obtain the following bound 
\[(f, (-L-L_Z)f)\]
\begin{equation} \label{Theta.5}
 \leq 
(C+C C_1)\mu|\nabla\nabla f|^2 +C C_2\mu|\nabla f|^2 
+ \frac{C}4 \mu \left( f^2\left(|\nabla\nabla U|^2  \right)\right) 
+ \frac{C}{16} \mu \left( f^2\left(|\nabla U|^2  \right)^2\right)
+ \frac{C }2 \mu \left(  
  |\nabla f|^2\Delta U \right)
	+ \mu(f^2\mathcal{V})   + \mu(f^2\mathcal{V}_Z)+\|f\|^2.
 \end{equation}
 Hence, we arrive at the following result, which provides a bound on a Sobolev-type norm by norms involving only the (iterated) sub-gradients.

 \begin{thm}
 Let $\mathfrak{G}$ be a nilpotent Lie algebra is of second order. Suppose the interaction $U$ satisfies the following quadratic form bound
	\[ \begin{split}
 \frac{C}{16} \mu \left( f^2\left(|\nabla U|^2  \right)^2\right)+
 \frac{C }4\mu \left( f^2\left(|\nabla\nabla U|^2  \right)\right) + \frac{C }2 \mu \left(  
  |\nabla f|^2\Delta U \right)
	+ \mu(f^2\mathcal{V})   + \mu(f^2\mathcal{V}_Z) 
	\leq   C' \left( \mu |\nabla\nabla f|^2 +\mu |\nabla f|^2+\mu f^2 \right),
 \end{split}
	\]
 with some constant $C'\in(0,\infty)$ independent of $f$.
 Then, the following global regularity bound holds 
 \[
 \|(1-L-L_Z)^\frac12 f\|_2^2\leq
 D \left( \mu|\nabla\nabla f|^2 + \mu|\nabla f|^2 +\mu f^2\right)
 \]
 with a constant $D\in(0,\infty)$ independent of $f$.
 \end{thm}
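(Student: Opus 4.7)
The plan is to assemble the final bound by substituting the hypothesis of the theorem directly into the preliminary estimate \eqref{Theta.5} that has already been derived. The structural work has mostly been done; what remains is a clean accounting of constants and a spectral-theoretic identification of the left-hand side with the Sobolev-type norm $\|(1-L-L_Z)^{1/2}f\|_2^2$.

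First, I would recall the chain of inequalities already established. Proposition 8.? (the second-order Lie algebra case) gives $\|\mathfrak{L}f\|_2^2 \le \sum_{j,l\leq k}(V_jV_lf, V_jV_lf)$, and combined with $(f,\mathfrak{L}f)\le (f,\mathfrak{L}^2 f)+\|f\|^2$ and the identity $\Theta=\mathfrak{L}-\sum_{j,l\le k}[V_j,V_l]^2$, one obtains \eqref{Theta.1}. The explicit expansion \eqref{Theta.2} together with the $U$-bound \eqref{Theta.3} and the identity \eqref{Theta.4} relating $(f,\Theta f)$ and $(f,(-L-L_Z)f)$ yields the master inequality \eqref{Theta.5}. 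This is the only place where all the potential-theoretic terms in $U$ appear on the right-hand side.

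Next, I would invoke the standing hypothesis of the theorem, which bounds precisely the sum
\[
 \frac{C}{16} \mu \!\left( f^2 |\nabla U|^4 \right)
 +\frac{C}{4}\mu\!\left( f^2 |\nabla\nabla U|^2\right)
 + \frac{C}{2} \mu \!\left( |\nabla f|^2\,\Delta U \right)
 + \mu(f^2\mathcal{V}) + \mu(f^2\mathcal{V}_Z)
\]
by $C'\bigl(\mu|\nabla\nabla f|^2+\mu|\nabla f|^2+\mu f^2\bigr)$. Substituting this into \eqref{Theta.5} collapses all the awkward $U$-dependent terms and leaves
\[
(f,(-L-L_Z)f)\;\le\;(C+CC_1+C')\,\mu|\nabla\nabla f|^2+(CC_2+C')\,\mu|\nabla f|^2+(C'+1)\,\mu f^2.
\]

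Finally, since $L$ and $L_Z$ are non-negative self-adjoint operators on $\mathbb{L}_2(\mu)$, the operator $1-L-L_Z$ is non-negative self-adjoint (here I am using $-L$ and $-L_Z$ for the generators, as in \eqref{Theta.4}), so by the spectral theorem
\[
\|(1-L-L_Z)^{1/2}f\|_2^2 \;=\; \|f\|_2^2 + (f,(-L-L_Z)f).
\]
Adding $\|f\|_2^2=\mu f^2$ to the previous display gives the claimed bound with $D=\max\{C+CC_1+C',\,CC_2+C',\,C'+2\}$. The only genuine content beyond bookkeeping is verifying the sign conventions in \eqref{Theta.4} and checking that the hypothesis is applied in the form dictated by \eqref{Theta.5}; the potentially delicate point is that the factor $\frac12\Delta U$ appearing in \eqref{Theta.3} sits inside $\mu(|\nabla f|^2 \Delta U)$ on the right of \eqref{Theta.5}, and one must ensure it is absorbed by the quadratic-form hypothesis rather than left as a separate lower-order term — which is exactly how the hypothesis has been stated.
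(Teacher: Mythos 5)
Your proposal is correct and takes essentially the same route the paper has already laid out: the theorem is nothing more than substituting the stated quadratic-form hypothesis into the master inequality \eqref{Theta.5} and then rewriting $(f,(-L-L_Z)f)+\|f\|_2^2$ as $\|(1-L-L_Z)^{1/2}f\|_2^2$ via the spectral theorem, which is exactly what you do. Your explicit constant tracking ($D=\max\{C+CC_1+C',\,CC_2+C',\,C'+2\}$) and the closing remark about how the $\frac12\Delta U$ term from \eqref{Theta.3} is deliberately left in \eqref{Theta.5} so it can be absorbed by the hypothesis are both accurate and match the paper's intent.
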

 
 \begin{example}
 Consider the Heisenberg group $\mathbb{H}_1$ with  generating fields
\[\begin{split}
    X&=\partial_x +\frac12 y\partial_z;  \\
Y&= \partial_y -\frac12 x\partial_z\\
\end{split}
\]
and the corresponding Kaplan norm 
\[
N \equiv \left(\left(x^2+y^2\right)^2 +16 z^2\right)^{\frac{1}{4}}.
\]
With $\kappa\in(0,\infty)$, let 
\[ U\equiv N^\kappa \]
From \cite{HZ} it satisfies the following U-Bound
\[\mu f^2\mathcal{V}\leq \mu|\nabla f|^2.\]
We have shown in Section \ref{sec6} that this interaction is not Adams' regular. \\
For quadratic form estimate with potential $\mathcal{V}_Z$,
with $Z \equiv [X,Y]= \partial_z\equiv Z$, we note that
\[\begin{split}
    ZN &=8 N^{-3} z, \\
    Z^2N &=8 Z\left(N^{-3} z \right) = 
    -3\cdot 2^6 N^{-7}  z^2 + 8  N^{-3}.
\end{split}
\]
Thus
\[ \begin{split}
\mathcal{V}_Z\equiv \frac14|ZU|^2-\frac12 Z^2 U 
&= \frac14(U'(N))^2|ZN|^2-\frac12 Z \left( U'(N)ZN\right)\\
&= \frac12\left(\frac12(U'(N))^2 -U^"(N) \right)|ZN|^2 -\frac12   U'(N)Z^2 N \\
&= \frac12\left(\frac{\kappa^2}{2} N^{2(\kappa-1)} -\kappa(\kappa-1)N^{\kappa-2} \right) 64 N^{-6} z^2 -\frac12   \kappa N^{\kappa-1} \left(-3\cdot 2^{6} N^{-7}  z^2 + 8  N^{-3}\right)\\
&= 32\kappa N^{\kappa-4}\left(\frac{\kappa}{2} N^{\kappa} - (\kappa-1) \right)   N^{-4} z^2 -4   \kappa N^{\kappa-4} \left(-3\cdot 2^{3} N^{-4}  z^2 + 1   \right)\\
&= 4\kappa N^{\kappa-4}\left[8\left(\frac{\kappa}{2} N^{\kappa} - (\kappa-1) \right)   N^{-4} z^2 - \left(-3\cdot 2^{3} N^{-4}  z^2 + 1   \right)\right]\\
&= 16\kappa N^{\kappa-4}\left[\left( \kappa  N^{\kappa} +3 \cdot 2  - 2(\kappa-1) \right)   N^{-4} z^2   - \frac14   \right]
\end{split}
\]
Using $z^2=\frac1{16}\left(N^4-|\mathbf{x}|^4\right)$, we can represent the above as follows
\[ \begin{split}
\mathcal{V}_Z\equiv \frac14|ZU|^2-\frac12 Z^2 U 
&= 16\kappa N^{\kappa-4}\left[\left( \kappa  N^{\kappa} +  3\cdot 2  - 2(\kappa-1) \right)   N^{-4} z^2   - \frac14   \right]\\
&=  \kappa N^{\kappa-4}\left[\left( \kappa  N^{\kappa} + 3\cdot 2  - 2(\kappa-1) \right)    \left(1 -\frac{ |\mathbf{x}|^4}{N^{4}}\right)  - 4   \right]
%
\end{split}
\]

For $q\geq 2$, in \cite{BDZ} we got the following 
\[
 \int \frac{U'(N)}{N^2}|f|^q d\mu\leq C |\nabla f|^q d\mu+D |f|^qd\mu   
\]
In case when $U(N)\equiv N^\kappa$, we have $\frac{U'(N)}{N^2}=\kappa N^{\kappa-3},$ which is not sufficient for us. 
Fortunately, using a technique employing Hardy's inequality,  \cite{YaoThPhD}, one can improve that to obtain
\[
 \int \frac{U'(N)}{N}|f|^qd\mu\leq C |\nabla f|^qd\mu+D |f|^qd\mu   .
\]
In particular, in the case when $U(N)\equiv N^\kappa$, we have $\frac{U'(N)}{N}=\kappa N^{\kappa-2}$.\\

Thus, iterating the quadratic form bound, we arrive at the following desired regularity estimate
\[
\int\left(  \frac14|ZU|^2-\frac12 Z^2 U \right) f^2\leq
\tilde A \mu \left(|\nabla\nabla f|^2 +|\nabla f|^2 +f^2\right),
\]
\end{example}

Hence we obtain the following result:

\begin{thm}
Let $\mathfrak{G}=\mathbb{H}_1$. Suppose the interaction $U=N^\kappa$, with $\kappa>2$.
 Then the following global regularity bound is true.
 \[ \|\Lambda_1 f\|_2^2\leq 2\left(\sum_{j,l\leq k}(V_j V_l f, V_jV_l f) +\|f\|_2^2\right).\]
\end{thm}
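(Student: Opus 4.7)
The Heisenberg algebra $\mathbb{H}_1$ is a two-step nilpotent Lie algebra: its horizontal stratum is spanned by $X = \partial_x + \tfrac12 y\partial_z$ and $Y = \partial_y - \tfrac12 x\partial_z$, and the commutator $[X,Y] = -\partial_z$ lies in the center, so $[\mathfrak{g},[\mathfrak{g},\mathfrak{g}]] = 0$. By the commutator identity $(**)$ established earlier in this section, the transformed fields $V_1 = X - \tfrac12 XU$ and $V_2 = Y - \tfrac12 YU$ generate a Lie algebra of the same step-two structure. The Proposition proved earlier therefore applies verbatim with $k=2$, yielding the stated inequality
\[ \|\Lambda_1 f\|_2^2 \leq 2\Bigl(\sum_{j,l\leq 2}(V_jV_lf, V_jV_lf) + \|f\|_2^2\Bigr). \]

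The role of the specific assumption $U = N^\kappa$ with $\kappa>2$ is to bridge this step-two proposition with the preceding global regularity theorem, whose hypothesis is a quadratic form bound on $\mathcal{V}$, $\mathcal{V}_Z$, $|\nabla\nabla U|^2$, $(|\nabla U|^2)^2$ and $(\Delta U)|\nabla f|^2$. The Example immediately preceding the statement verifies precisely this hypothesis: an explicit computation in the Kaplan coordinates shows that $|ZU|^2$, $Z^2U$ and $\mathcal{V}_Z$ have polynomial growth controlled by $N^{2(\kappa-2)}$, and the basic $U$-bound of Hebisch--Zegarlinski together with the improved Hardy-type $U$-bound $\int \tfrac{U'(N)}{N}|f|^q d\mu \leq C\mu|\nabla f|^q + D\mu|f|^q$ from \cite{YaoThPhD} controls all such potential terms by $\mu|\nabla\nabla f|^2 + \mu|\nabla f|^2 + \mu f^2$. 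The restriction $\kappa>2$ is exactly what is needed for the exponents arising in the iteration to close, so that $\frac{U'(N)}{N}=\kappa N^{\kappa-2}$ dominates the unwanted potentials.

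The hard part of the argument is therefore not the Proposition itself (which is immediate from step-two nilpotency) but the verification of the improved $U$-bound with $N^{-1}$ in place of $N^{-2}$ in the prefactor; it is precisely this sharpening, obtained via Hardy's inequality, that allows $(|\nabla U|^2)^2 \sim N^{4(\kappa-1)}$ and $\mathcal{V}_Z$ to be absorbed when $\kappa>2$. Granted this ingredient, combining the Proposition with the previous theorem's conclusion $\|(1-L-L_Z)^{1/2}f\|_2^2 \leq D(\mu|\nabla\nabla f|^2 + \mu|\nabla f|^2 + \mu f^2)$ and the identity $\Theta = \mathfrak{L} + (Z-\tfrac12 ZU)^2$ gives the displayed regularity bound, completing the proof.
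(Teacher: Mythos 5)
Your first paragraph is the entire proof, and it is exactly the paper's: $\mathbb{H}_1$ is step-two nilpotent, the fields $V_j = X_j - \tfrac12 X_jU$ inherit the step-two structure by $(**)$, and the Proposition for second-order nilpotent Lie algebras gives the displayed inequality directly with $k=2$. That Proposition carries no hypothesis on $U$, so, as you correctly observe, the assumption $U = N^\kappa$ with $\kappa>2$ plays no role in the inequality as literally stated.

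Your subsequent commentary on why the paper nonetheless states those hypotheses is insightful and essentially matches the surrounding context: the preceding Example verifies the quadratic-form bound that feeds the earlier Theorem on $\|(1-L-L_Z)^{1/2}f\|_2^2$, and the nontrivial ingredient there is indeed the improved $U$-bound with prefactor $U'(N)/N \sim N^{\kappa-2}$ (via Hardy's inequality from \cite{YaoThPhD}) rather than the weaker $U'(N)/N^2$ of \cite{BDZ}; the constraint $\kappa>2$ is what makes that prefactor large enough to absorb $\mathcal{V}_Z$ and the other potential terms. Two small corrections, though. First, the operator identity has the wrong sign: from $\Theta = \mathfrak{L} - \sum_{j,l}[V_j,V_l]^2$ one gets $\Theta = \mathfrak{L} - (Z - \tfrac12 ZU)^2$, and since $Z-\tfrac12 ZU$ is skew-adjoint this reads, as a quadratic form, $(f,\Theta f) = (f,\mathfrak{L}f) + \|(Z-\tfrac12 ZU)f\|_2^2$, which is what $\eqref{Theta.4}$ uses. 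Second, your closing sentence is slightly circular: you say that combining the Proposition with the previous Theorem's $\nabla$-bound ``gives the displayed regularity bound,'' but the displayed bound \emph{is} the Proposition --- what that combination actually yields is the bound in terms of $\mu|\nabla\nabla f|^2 + \mu|\nabla f|^2 + \mu f^2$, which is a stronger statement than the one written in the theorem but is plausibly the one intended.
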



\subsection{Towards Global Regularity II} \label{sec8.1}
In this section, we continue the discussion of the global regularity problem in a general setup described at the beginning of the previous section.\
That is, given a probability measure $d\mu\equiv e^{-U}d\lambda$, we consider
the fields
\[
V_j\equiv V(X_j,U)\equiv X_j -\frac12 X_jU
\]
associated with a first-order differential operator $X_j$.
We have
\[\tag{*}
\mu (fV_jg)= - \mu ((V_jf)g)
\]
We also have
\[\begin{split}
    [V_j,V_k]&= [X_j -\frac12 X_jU, X_k -\frac12 X_kU]\\
    &=
 [X_j,X_k]-\frac12[X_j,X_kU] -\frac12 [X_jU, X_k]\\
 &=
  [X_j,X_k] -\frac12\left(X_j X_kU- X_kX_jU\right)
  =[X_j,X_k] -\frac12 [X_j,X_k] U \equiv V_{j,k}
\end{split}
\]
That is, we have the following commutator properties
 \[\tag{**}
 [V_j,V_i] 
  =[X_j,X_i] -\frac12 [X_j,X_i] U \equiv V_{j,i}
\]
Thus, if $X_j$, $j=1,..,k$ generate a nilpotent Lie algebra, then the corresponding fields $V_j\equiv V(X_j,U)$,  $j=1,..,k$, generate a nilpotent Lie algebra of the same structure.
If $U$ is $n$-time differentiable, the corresponding field obtained by an $n^{th}$ order commutator is defined as follows: 
for an ordered $n$-tuple $\alpha\equiv (j_1,..,j_n)$, $j_l\leq k$ for $l=1,..,n$,
we set 
\[\tag{***}
V_{j_1,..,j_n}\equiv [V_{j_n},..,[V_{j_2},V_{j_1}]..] \equiv
[X_{j_n},..,[X_{j_2},X_{j_1}]..] -\frac12 [X_{j_n},..,[X_{j_2},X_{j_1}]..] U  .
\]
\\
Define the following operators
\[\mathfrak{L}\equiv -\sum_{j=1,..,k} V_j^2,\]
and, for $\lambda\in(0,\infty),$
\[\mathfrak{R}_{\lambda}\equiv (\lambda +\mathfrak{L})^{-1} .\]
For simplicity, we set
\[\mathfrak{R}\equiv (1+\mathfrak{L})^{-1}=\mathfrak{R}_{\lambda=1}.\]
 We note that
 \[ \|\mathfrak{R}_{\lambda}\| \leq \frac{1}{\lambda}.\]
 We are after the following result, whose proof is completed below for the case of second-order nilpotent Lie groups.
\begin{thm}
For any $s\in\mathbb{N}$, there exists $\tilde C\in(0,\infty)$ such that
\[\begin{split}\|(1+\mathfrak{L})^{\frac{s}{2}} V_l (1+\mathfrak{L})^{-\frac{s}{2}-\frac12}\tilde f\|_2 &\leq 
\tilde C\|\tilde f\|_2.
\end{split}\]
\end{thm}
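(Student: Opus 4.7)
The plan is induction on $s$. Let $T_s^{(l)} := (1+\mathfrak{L})^{s/2} V_l (1+\mathfrak{L})^{-s/2-1/2}$. The base case $s=0$ reduces to $\|V_l (1+\mathfrak{L})^{-1/2}\|_{2\to 2}\leq 1$, which I obtain from the quadratic-form inequality $V_l^* V_l = -V_l^2 \leq -\sum_j V_j^2 = \mathfrak{L} \leq 1+\mathfrak{L}$, valid because (*) gives $V_l^* = -V_l$; then substituting $u=(1+\mathfrak{L})^{-1/2}\tilde f$ in $\|V_lu\|_2^2 \leq ((1+\mathfrak{L})u,u)$ is all that is needed.

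For the inductive step $s \to s+2$, I use that $\mathfrak{L}$ commutes with its own functional calculus to write the telescoping identity
\[ T_{s+2}^{(l)} - T_s^{(l)} = (1+\mathfrak{L})^{s/2} [\mathfrak{L}, V_l] (1+\mathfrak{L})^{-s/2-3/2}.\]
The decisive algebraic input, available because the Lie algebra is of order two, is the commutator computation
\[ [\mathfrak{L}, V_l] = -\sum_j [V_j^2, V_l] = -\sum_j (V_j V_{j,l} + V_{j,l} V_j) = 2\sum_j V_{l,j} V_j,\]
in which the central brackets $V_{l,j} = [V_l, V_j]$ commute with every $V_j$ and with $\mathfrak{L}$ (an explicit check uses (**) and the vanishing of $[X_i, [X_l, X_j]]$ in the order-two case). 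Commuting each $V_{l,j}$ past the adjacent functional calculus of $(1+\mathfrak{L})$ and past $V_j$, and absorbing one extra factor of $(1+\mathfrak{L})^{-1}$, I collapse the identity to
\[ T_{s+2}^{(l)} - T_s^{(l)} = 2 \sum_{j\leq k} T_s^{(j)} \cdot V_{l,j}(1+\mathfrak{L})^{-1}.\]
The factors $T_s^{(j)}$ are bounded uniformly in $j$ by the inductive hypothesis applied simultaneously to every generator, so what remains is to bound $V_{l,j}(1+\mathfrak{L})^{-1}$; this is a sub-elliptic regularity estimate $\|V_{l,j} g\|_2 \leq C \|(1+\mathfrak{L}) g\|_2$, for which I combine the Proposition preceding the statement (which equates $\sum \|V_{l,j} g\|^2$ with $\sum\|V_j V_l g\|^2 - \|\mathfrak{L} g\|^2$) with the sub-elliptic bound $\sum \|V_j V_l g\|^2 \leq C'(\|\mathfrak{L} g\|^2 + \|g\|^2)$ extracted from the quadratic-form manipulations \eqref{Theta.2}--\eqref{Theta.5}. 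This closes the induction on even $s$; odd values are then reached by Stein's complex interpolation theorem applied to the analytic family $z \mapsto (1+\mathfrak{L})^z V_l (1+\mathfrak{L})^{-z-1/2}$ between $z=s/2$ and $z=(s+2)/2$.

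The principal obstacle will be the regularity bound $\|V_{l,j}(1+\mathfrak{L})^{-1}\|_2<\infty$. In the unweighted Haar-measure setting this is classical Folland--Stein sub-elliptic regularity, but in the $\mu$-weighted $V$-field framework it must be argued from the $U$-bound hierarchy and integration by parts in the Dirichlet form, precisely what \eqref{Theta.2}--\eqref{Theta.5} is set up to deliver. A secondary obstruction is the reliance on centrality of $V_{l,j}$, which is exactly the order-two hypothesis; treating higher nilpotency would require a double induction on $s$ and on the nilpotent depth, with the iterated brackets $V_{l,j,i},\ldots$ entering through further regularity estimates that terminate only thanks to finite nilpotency.
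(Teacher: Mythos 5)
Your proof follows essentially the same architecture as the paper's: both proofs identify $s=0$ as the trivial base case, both reduce the general case to controlling a commutator of $V_l$ with powers of $(1+\mathfrak{L})$, and both exploit the order-two centrality of $V_{l,j}=[V_l,V_j]$ to slide the commutator factors through the resolvent. The paper expands $[(1+\mathfrak{L})^n,V_l]$ all at once via $[A^n,B]=\sum_m\binom{n}{m}\mathrm{ad}_A^m(B)A^{n-m}$ and computes $\mathrm{ad}_{\mathfrak{L}}^m(V_l)=2^m\sum V_{j_m}V_{j_mj_{m-1}}\cdots V_{lj_1}$, whereas you use a one-step telescope $T_{s+2}^{(l)}-T_s^{(l)}=2\sum_j T_s^{(j)}V_{l,j}(1+\mathfrak{L})^{-1}$; the telescope is correct and these are two ways of organizing the same calculation, giving the same geometric-type constant $(1+2Dk)^n$. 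Your remark that complex interpolation handles odd $s$ is also reasonable (the paper in fact closes only the even case, so you address slightly more, modulo checking admissible growth of the analytic family on vertical strips).

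The genuine difficulty is where you and the paper diverge, and where your write-up has a gap. The paper does \emph{not} prove that $V_{l,j}\mathfrak{R}$ is bounded; it explicitly posits ``if the operator norm $\max\|V_{ji}\mathfrak{R}\|\equiv D\in(0,\infty)$'' and treats $D$ as a standing hypothesis. You instead try to derive $\|V_{l,j}(1+\mathfrak{L})^{-1}\|<\infty$ from the Proposition together with \eqref{Theta.2}--\eqref{Theta.5}, and this does not close. The Proposition is an \emph{identity},
\[
\sum_{j,l}\|V_jV_lf\|^2=\|\mathfrak{L}f\|_2^2+\sum_{j,l}\|V_{l,j}f\|^2 ,
\]
so the ``sub-elliptic bound'' you invoke, namely $\sum\|V_jV_lf\|^2\leq C'(\|\mathfrak{L}f\|^2+\|f\|^2)$, is \emph{exactly equivalent} to the claim $\sum\|V_{l,j}f\|^2\leq (C'-1)\|\mathfrak{L}f\|^2+C'\|f\|^2$ that you are trying to establish; the Proposition by itself provides no independent leverage. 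The estimates \eqref{Theta.2}--\eqref{Theta.5} do not supply the needed input either: they control $\sum\|V_jV_lf\|^2$ by quantities such as $\mu|\nabla\nabla f|^2$, $\mu(|\nabla f|^2|\nabla U|^2)$, $\mu(f^2|\nabla U|^4)$ built from the original $X$-fields and powers of $\nabla U$, not by $\|\mathfrak{L}f\|^2$; relating those $X$-field quantities back to the $\mathfrak{L}$-norm is precisely a maximal-hypoellipticity statement for the weighted operator, which is what still has to be proved. So either cite the boundedness $\max\|V_{ji}\mathfrak{R}\|<\infty$ as an explicit hypothesis (as the paper does), or supply a genuine proof of it, e.g.\ via a global weighted Rothschild--Stein-type estimate built from the $U$-bounds of Section~\ref{sec8}--\ref{sec9}; the circular route through the Proposition cannot work.
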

Note that from the definition of the norm, the integration by parts formula for the field $V_l,$ and the definition of $\mathfrak{L}$, we have the case $s=0:$
\[
\|V_l\mathfrak{R}^{\frac12}\tilde f\|_2\leq  \|\tilde f\|_2 .
\]
Thus, to show the lemma, the following bound would be sufficient
\[\begin{split}
\|[(1+\mathfrak{L})^{\frac{s}{2}} ,V_l] \mathfrak{R}^{\frac{s}{2}+\frac12}\tilde f\|_2 &\leq 
\tilde C\|\tilde f\|_2.
\end{split}\]
For $s=2n$, by an inductive use of the operator identity 
\[[A^n,B]=[A,B]A^{n-1}+A[A^{n-1},B],\]
one gets, with $ad_A(B)\equiv [A,B]$,
\[
[A^n,B]= \sum_{m=1}^n\binom{n}{m}ad_A^m(B)A^{n-m}
.
\]
Hence, for $\frac{s}{2}=n$, we have
\[\begin{split}
 [(1+\mathfrak{L})^{n} ,V_l]  &=
 \sum_{m=1}^n (-1)^m\binom{n}{m}
ad_{\mathfrak{L}}^m(V_l)(1+\mathfrak{L})^{n-m} ,
\end{split}\]
which implies
 \begin{equation}\label{Kn}\begin{split}
\|[(1+\mathfrak{L})^{n} ,V_l] (1+\mathfrak{L})^{-n-\frac12}\tilde f\|_2 &\leq 
 \sum_{m=1}^n  \binom{n}{m}
\|ad_{\mathfrak{L}}^m(V_l)(1+\mathfrak{L})^{n-m}  (1+\mathfrak{L})^{-n-\frac12}\tilde f\|_2 \\
&=\sum_{m=1}^n  \binom{n}{m}
\|ad_{\mathfrak{L}}^m(V_l)\mathfrak{R}^{m} \mathfrak{R}^{\frac12}\tilde f\|_2 .
\end{split}
 \end{equation}
Note that the contributions to the multiple commutator will depend on the order of the nilpotent group. We observe that using identity $[A^2,B]=\{A,[A,B]\}$, we get
\[
ad_{\mathfrak{L}}(V_l)=\sum_{j_1\leq k}
\{V_{j_1}, V_{lj_1}\},
\]
with the curly bracket denoting the anticommutator. Then, using the Leibniz rule
\[
[A,\{B,C\}]= \{[A,B],C\}+\{B,[A,C]\},
\]
we get
\[\begin{split}
ad_{\mathfrak{L}}^2(V_l)&=\sum_{j_1j_2\leq k}
\{V_{j_2},[V_{j_2},\{V_{j_1}, V_{lj_1}\}
] \} \\
&= \sum_{j_1j_2\leq k}
\{V_{j_2}, \{V_{j_1j_2}, V_{lj_1}\}+\{V_{j_1}, V_{lj_1j_2}\}\}.
\end{split}\]
For example, for a nilpotent Lie group of order two, the last formula will simplify as follows
\[ 
ad_{\mathfrak{L}}^2(V_l) = \sum_{j_1j_2\leq k}
\{V_{j_2}, \{V_{j_1j_2}, V_{lj_1}\} \}
=4 \sum_{j_1j_2\leq k}
 V_{j_2} V_{j_1j_2}  V_{lj_1} 
\]
where in the last step we used the fact that second-order fields are in the center of the algebra.
Consequently, we get
\[ 
ad_{\mathfrak{L}}^3(V_l) = \sum_{j_1j_2\leq k} ad_{\mathfrak{L}}\left(
\{V_{j_2}, \{V_{j_1j_2}, V_{lj_1}\} \}\right)
=4 \sum_{j_1j_2j_3\leq k}\{V_{j_3}, [ V_{j_3}, V_{j_2} V_{j_1j_2}  V_{lj_1}]
 \},
\]
and since higher than second-order fields are zero, we get
\[ 
ad_{\mathfrak{L}}^3(V_l) 
=4 \sum_{j_1j_2j_3\leq k}\{V_{j_3},  V_{j_2j_3} V_{j_1j_2}  V_{lj_1} 
 \}
 = 8 \sum_{j_1j_2j_3\leq k} V_{j_3}  V_{j_2j_3} V_{j_1j_2}  V_{lj_1} .
\]
 Hence, by induction in the second order case, for any $n\in\mathbb{N}$ we get
 \begin{equation} \label{KO2}
   ad_{\mathfrak{L}}^n(V_l) 
 = 2^n \sum_{j_1j_2j_3..j_n\leq k} V_{j_{n}}V_{j_{n}j_{n-1}} .. V_{j_2j_3} V_{j_1j_2}  V_{lj_1}.
 \end{equation}
In terms of graphs, we can interpret this by a property that a vertex may be connected to at most two others.
Using the last expression \eqref{KO2}, we can bound each term from \eqref{Kn} as follows
\[
\|ad_{\mathfrak{L}}^m(V_l)\mathfrak{R}^{m} \mathfrak{R}^{\frac12}\tilde f\|_2\leq 
2^m \sum_{j_1j_2j_3..j_m\leq k} \|V_{j_{m}}V_{j_{m}j_{m-1}} .. V_{j_2j_3} V_{j_1j_2}  V_{lj_1}\mathfrak{R}^{m} \mathfrak{R}^{\frac12}\tilde f\|_2.
\]
If the operator norm $\max \|V_{ji}\mathfrak{R}\|\equiv D\in(0,\infty)$  , 
we end up with the estimate 
\[
\|ad_{\mathfrak{L}}^m(V_l)\mathfrak{R}^{m} \mathfrak{R}^{\frac12}\tilde f\|_2\leq 
2^m D^m k^{m+1} \max \|V_{j} \mathfrak{R}^{\frac12}\tilde f\|_2.
\]
Consequently, we obtain bound on \eqref{Kn} as follows
\begin{equation}\label{KnFin} 
\|[(1+\mathfrak{L})^{n} ,V_l] (1+\mathfrak{L})^{-n-\frac12}\tilde f\|_2  \leq 
 \sum_{m=1}^n \binom{n}{m}
2^m D^m k^{m+1}   \max\|V_{j} \mathfrak{R}^{\frac12}\tilde f\|_2 =k(1+2Dk)^n\max\|V_{j} \mathfrak{R}^{\frac12}\tilde f\|_2,
 \end{equation}
with the right-hand side being finite.
This completes the discussion of the second-order case.
\qed

\begin{rem}
For the third-order case, we have
\[\begin{split}
ad_{\mathfrak{L}}^3(V_l)
&= \sum_{j_1j_2\leq k}
ad_{\mathfrak{L}}\left(\{V_{j_2}, \{V_{j_1j_2}, V_{lj_1}\}+\{V_{j_1}, V_{lj_1j_2}\}\}\right)\\
&= \sum_{j_1j_2j_3\leq k}
 \left(\{V_{j_3},\{V_{j_2j_3}, \{V_{j_1j_2}, V_{lj_1}\}\} +\{V_{j_3},\{V_{j_2}, \{V_{j_1j_2j_3}, V_{lj_1}\}\}
 +\{V_{j_3},\{V_{j_2}, \{V_{j_1j_2}, V_{lj_1j_3}\}\}\right)\\
&+\sum_{j_1j_2j_3\leq k}
\left(\{V_{j_3},\{V_{j_2j_3},\{V_{j_1}, V_{lj_1j_2}\}\}\} +\{V_{j_3},\{V_{j_2},\{V_{j_1j_3}, V_{lj_1j_2}\}\}\}
+\{V_{j_3},\{V_{j_2},\{V_{j_1}, V_{lj_1j_2j_3}\}\}\}\right).
\end{split}\]
Since the fourth-order fields are zero and the third-order fields are in the center, in the third-order case  this simplifies as follows
\[\begin{split}
ad_{\mathfrak{L}}^3(V_l)
&= \sum_{j_1j_2j_3\leq k}
 \left(\{V_{j_3},\{V_{j_2j_3}, \{V_{j_1j_2}, V_{lj_1}\}\} +\{V_{j_3},\{V_{j_2}, \{V_{j_1j_2j_3}, V_{lj_1}\}\}
 +\{V_{j_3},\{V_{j_2}, \{V_{j_1j_2}, V_{lj_1j_3}\}\}\right)\\
&+\sum_{j_1j_2j_3\leq k}
\left(\{V_{j_3},\{V_{j_2j_3},\{V_{j_1}, V_{lj_1j_2}\}\}\} +\{V_{j_3},\{V_{j_2},\{V_{j_1j_3}, V_{lj_1j_2}\}\}\}
\right)\\
&= \sum_{j_1j_2j_3\leq k}
 \left(\{V_{j_3},\{V_{j_2j_3}, \{V_{j_1j_2}, V_{lj_1}\}\} +2\{V_{j_3},\{V_{j_2},  V_{lj_1}\}\}V_{j_1j_2j_3}
 +2^2\{V_{j_3}, V_{j_2} \} V_{j_1j_2}  V_{lj_1j_3}\right)\\
&+\sum_{j_1j_2j_3\leq k}
2^2\left( \{V_{j_3}, V_{j_1} \}V_{j_2j_3}V_{lj_1j_2} +\{V_{j_3}, V_{j_2}\} V_{j_1j_3}  V_{lj_1j_2} \right).\\
\end{split}\]
In terms of graphs, we admit vertices of at most third order.
\end{rem}
\section{Hardy's Inequality for Probability Measures} \label{sec9}

Let $U\equiv U(N)$ and let $d\mu\equiv e^{-U}d\lambda$. Let $\nabla\equiv (X_j)_{j=1,..,n}$ be a sub-gradient associated to a nilpotent Lie group.
In this context, we have the following Hardy's inequality \cite{RS},
\[
\int \frac{f^2}{|\mathbf{x}|^2}d\lambda\leq C \int |\nabla f|^2 d\lambda.
\]
Substituting $f e^{-\frac12 U}$ instead of $f$, we get
\[
\int \frac{f^2}{|\mathbf{x}|^2}d\mu\leq C \int |\nabla f-\frac12f\nabla U |^2 d\mu \leq
  2C \int |\nabla f|^2 d\mu+
 \frac{C}{2} \int f^2|\nabla U |^2 d\mu .
\]
The corresponding $U$-bound is as follows
\[
\int f^2 \left( \frac14 |\nabla U|^2 -\frac12 \Delta U\right) d\mu \leq \int 
|\nabla f|^2d\mu.
\]
In case of Heisenberg group,  for $U=U(N)$, since $\Delta N= (n+2m-1)\frac{|\nabla N|^2}{N}=(n+2m-1)\frac{|x|^2}{N^3}$, we have
\[\begin{split}
 \int f^2 \left( \frac14 |\nabla U|^2 -\frac12 \Delta U\right) d\mu &= \int f^2 \left( \frac14 | U'|^2|\nabla N|^2 -\frac12   U"|\nabla N|^2 -\frac12   U'\Delta N\right) d\mu \\ 
 &=\int f^2 \left( \left( \frac14 | U'|^2  -\frac12   U"\right) \frac{|\mathbf{x}|^2}{N^2} -  {\frac{n+2m-1}{2}}   U'\frac{|\mathbf{x}|^2}{N^3}\right) d\mu. 
\end{split}\]

 \noindent In this class of examples, if $|U'|^2$ grows faster to infinity than $U"$, with some $D\in(0,\infty),$ we have
 \[ |\nabla U|^2\leq D\left(1+ \frac14|\nabla U|^2-\frac12 \Delta U\right).\]
 This implies the following Hardy-type inequality with a constant $\tilde  C\in(0,\infty)$
\[ 
\int\frac{f^2}{|\mathbf{x}|^2}d\mu \leq  \tilde  C \int |\nabla f|^2 d\mu + \int f^2 d\mu.\]
 Using this Hardy's inequality and the $U$-bound inequality, with a constant $D\geq 0$ such that
  
 \[W\equiv \left( \left( \frac14 | U'|^2  -\frac12   U"\right)  -\frac{n+2m-1}{2}   U'\frac{1}{N}\right) + D  \geq 0,\] 
 we have 
 \[
 \int f^2\left(\frac{1}{N^2} \cdot \frac{N^2}{|\mathbf
 {x}|^2} +W \frac{|\mathbf{x}|^2}{N^2}  \right) d\mu \leq \tilde C \int |\nabla f|^2d\mu +\tilde D \int f^2.
 \]
 Hence, (using \cite{YaoThPhD} minimization with respect to the parameter $\frac{|\mathbf{x}|^2}{N^2}$), we get
 \[
 \int f^2 \frac{W^\frac12}{N} d\mu \leq 
 \tilde C \int |\nabla f|^2d\mu +\tilde D \int f^2.
 \]
 If $U=N^\kappa$, we get 
 \[
 \frac{W^\frac12}{N} \sim N^{\kappa - 2},
 \]
 which is better than the bound in \cite{BDZ}
 where the left-hand side contains a multiplier
 $U'/N^2\sim  N^{\kappa-3}$.

\section{Examples of Regular and Nonregular Potentials} \label{sec10}
\begin{example} \label{E.g.1}
 For $\varepsilon\in(0,1)$, let 
\[U(\mathbf{x})\equiv  r^{\frac{1}{1-\varepsilon}\left(1+\varepsilon\cos(\varphi) \right)},\] and let
\[d\mu \equiv \frac{e^{-U}}{\int e^{-U}d\lambda}d\lambda .\]
In particular, for $\varphi=0$, we have 
\[ U(\mathbf{x})_{|\varphi=0}=  r^{\frac{1+\varepsilon }{1-\varepsilon}},\]
with $(1+\varepsilon)/(1-\varepsilon)\geq 2$ for $\varepsilon \geq 1/3$,
while for $\varphi=\pi$,
we have
\[ U(\mathbf{x})_{|\varphi=\pi}=  r.\]
The gradient of $U$ in polar coordinates is as follows
\[\begin{split} 
\nabla U &= \left(\mathbf{e}_{r} \partial_r+
\mathbf{e}_\varphi \frac1{r}\partial_\varphi \right) U
=
\nabla \exp\{\frac{1}{1-\varepsilon}(1+\varepsilon \cos(\varphi))\log r\} \\
&=  \mathbf{e}_{r} U \left( \frac{1}{1-\varepsilon}(1+\varepsilon \cos(\varphi))\frac{1}{r} \right)  - \mathbf{e}_\varphi U\frac{\varepsilon }{1-\varepsilon} \sin(\varphi) \frac{\log r}{r} .
\end{split}
\]
Hence,
\[\begin{split}
|\nabla U|^2  &=  U^2 \left( \frac{1}{1-\varepsilon}(1+\varepsilon \cos(\varphi))\frac{1}{r} \right)^2  +U^2\left(\frac{\varepsilon }{1-\varepsilon} \sin(\varphi) \frac{\log r}{r} \right)^2\\
&=  \frac{1}{(1-\varepsilon)^2}\frac{1}{r^2}U^2 \left(   (1+\varepsilon \cos(\varphi))^2  +  \varepsilon^2   \sin^2(\varphi) (\log r)^2 \right) .
\end{split}
\]
In particular, for $\varphi=\pi$, we have
\[|\nabla U|^2 = \frac{1}{r^2}U^2= r^{\frac{2}{1-\varepsilon}\left(1+\varepsilon\cos(\varphi) \right) - 2} =1 \]
For the Laplacian, we have that
\[
\Delta U  =   
 \partial_r^2 U + \frac{1}{r}\partial_r U
 +\frac{1}{r^2} 
 \partial_\varphi^2 U 
\]
Hence,
\[
\begin{split} 
\Delta U  &=   
 U \left( \frac{1}{1-\varepsilon}(1+\varepsilon \cos(\varphi))\frac{1}{r} \right)^2
 -U \left( \frac{1}{1-\varepsilon}(1+\varepsilon \cos(\varphi))\frac{1}{r^2} \right)
 \\
 &+ \frac{1}{r}U \left( \frac{1}{1-\varepsilon}(1+\varepsilon \cos(\varphi))\frac{1}{r} \right) \\
 &-\frac{1}{r^2} 
 \partial_\varphi  \left(U\frac{\varepsilon }{1-\varepsilon} \sin(\varphi)  \log r  \right) \\
 &= U \left( \frac{1}{1-\varepsilon}(1+\varepsilon \cos(\varphi))\frac{1}{r} \right)^2
 -U \left( \frac{1}{1-\varepsilon}(1+\varepsilon \cos(\varphi))\frac{1}{r^2} \right)
 \\
 &+ \frac{1}{r}U \left( \frac{1}{1-\varepsilon}(1+\varepsilon \cos(\varphi))\frac{1}{r} \right) \\
 &+\frac{1}{r^2} 
  U\left(\frac{\varepsilon }{1-\varepsilon} \sin(\varphi)  \log r  \right)^2 
 -\frac{1}{r^2} 
  U\frac{\varepsilon }{1-\varepsilon} \cos(\varphi)  \log r    \\
  & \\
  &=  \frac{1}{r^2}U\left(
  \left( \frac{1}{1-\varepsilon}(1+\varepsilon \cos(\varphi))\right)^2
 + 
 \left(\frac{\varepsilon }{1-\varepsilon} \sin(\varphi) \frac{\log r}{r} \right)^2 
 - \frac{\varepsilon }{1-\varepsilon} \cos(\varphi) \frac{\log r}{r}  \right).
\end{split}
\]
Thus,
\[\Delta U =\frac{1}{(1-\varepsilon)^2}\frac{1}{r^2}U\left(
  \left( 1+\varepsilon \cos(\varphi) \right)^2
 + 
 \left( \varepsilon  \sin(\varphi)  \log r \right)^2 
 -  \varepsilon  (1-\varepsilon)\cos(\varphi)  \log r   \right) .
\]
It is interesting to see the values for particular $\phi$. In particular, at $\varphi=\pi$, we have
\[\begin{split}
    \Delta U &= \frac{1}{r} \left(
1
+ \frac{\varepsilon}{1-\varepsilon}   \log r  \right) .
\end{split}
\]
Thus, for $\varphi=\pi,$ we have
\[
\frac{1}{4}|\nabla U|^2 - \frac12 \Delta U = 
\frac12\left(\frac{1}{2} -  \frac{1}{r} \left(
1
+ \frac{\varepsilon}{1-\varepsilon} \log r   \right)\right) ,
\]
which is strictly positive for large $r$, but does not grow to infinity with $r\to\infty$.\\
On the other hand, for $\varphi=0$, we have
\[
\left(\frac{1}{4}|\nabla U|^2 - \frac12 \Delta U \right)_{|\varphi=0}= \frac{(\varepsilon +1)^2 r^{-\frac{4 \varepsilon }{\varepsilon -1}}-2 r^{-\frac{2}{\varepsilon -1}-3} \left((\varepsilon +1)^2+(\varepsilon -1) \varepsilon  \log (r)\right)}{4 (\varepsilon -1)^2},
\]
which is positive for $\epsilon<1$ and $r$ large enough,
and in fact growing to infinity as $r\to\infty$.\\
Finally, for $\varphi=\pm \pi/2$, we have
\[
\left(\frac{1}{4}|\nabla U|^2 - \frac12 \Delta U \right)_{|\varphi=\pm\pi/2}= 
-\frac{r^{-\frac{2 \epsilon }{\epsilon -1}} \left(2 r^{\frac{1}{\epsilon -1}}-1\right) \left(\epsilon ^2 \log ^2(r)+1\right)}{4 (\epsilon -1)^2},
\] 
which is positive for $\epsilon<1$ and $r$ large enough, and diverges to infinity with $r\to \infty$. \\

In general, we have
\[\begin{split}
\frac{1}{4}|\nabla U|^2 \pm \frac12 \Delta U &=  
\frac12  \frac{1}{(1-\varepsilon)^2}\frac{1}{r^2}U \left[
\frac12  U \left(  (1+\varepsilon \cos(\varphi))^2  +  \varepsilon^2   \sin^2(\varphi) (\log r)^2 \right) \right.\\
& \left. \pm \left(
  \left( 1+\varepsilon \cos(\varphi) \right)^2
 + 
 \varepsilon^2  \sin^2(\varphi) \left(  \log r \right)^2 
 - \varepsilon  
 (1-\varepsilon) \cos(\varphi)  \log r  \right)  \right],
\end{split}
\]
which is strictly positive for sufficiently large $r$,  but not necessarily diverging to infinity as $r\to\infty$. 
\\
Using a partition of unity 
 $\chi_1, \chi_2\equiv 1-\chi_1,$ with twice differentiable and bounded derivatives of the function
 $\chi_1,$ equalling to one outside a ball of radius $R\in(1,\infty)$ and zero inside a ball of radius $R/2$, we can decompose the interaction as follows
 \[U=U_1+U_2\equiv U\chi_1 + U \chi_2.\]
 With this decomposition, we have
 $U_1,$ an Adams' regular potential, for which we can choose $R\in(0,\infty)$ so that
 \[ \mathcal{V}_1\equiv \frac14|\nabla U_1|^2-\frac14\Delta U_1 \geq m,\]
 for some constant $m\in(0,\infty)$.
 Then, for the measure $d\mu_1\equiv \frac{1}{Z_1}e^{-U_1}d\lambda$ with $Z_1\equiv \int e^{-U_1}d\lambda$, we have the following bound, \cite{HZ}, 
\[\mu_1\left(f^2\mathcal{V}_1\right)\leq \mu_1 |\nabla f|^2.\]
Given the lower bound $\mathcal{V}_1\geq m>0$, this immediately implies Poincar\'e inequality,
\[
m\mu_1\left(f-\mu_1 f\right)^2 \leq \mu_1 |\nabla f|^2.
\]
Since $U_2$ is bounded, we can now apply the bounded perturbation lemma  to arrive at
\end{example}
\begin{thm}
There exists a constant $\tilde m\in(0,\infty)$ such that
\[ \tilde m \mu (f-\mu f)^2 \leq \mu |\nabla f|^2,\]
for any function $f$ for which the right-hand side is well-defined.\\
\end{thm}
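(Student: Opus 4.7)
The plan is to obtain the Poincaré inequality for $\mu$ from the Poincaré inequality for $\mu_1$ already established in Example \ref{E.g.1}, via a Holley--Stroock bounded perturbation argument. Recall that the decomposition $U=U_1+U_2$ with $U_2=U\chi_2$ supported in the ball of radius $R$ makes $U_2$ a bounded function (since $U$ is smooth and $\chi_2$ has compact support), so that
\[
d\mu \;=\; \frac{Z_1}{Z}\,e^{-U_2}\,d\mu_1,
\]
where $Z=\int e^{-U}d\lambda$ and $Z_1=\int e^{-U_1}d\lambda$. Set $K\equiv \mathrm{osc}(U_2)=\sup U_2-\inf U_2<\infty$.

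The first step is to use the variance characterization
\[
\mathrm{Var}_\mu(f)\;=\;\tfrac12\,\mu\!\otimes\!\mu\bigl((f(x)-f(y))^2\bigr),
\]
together with the pointwise sandwich $e^{-K}\tfrac{Z_1}{Z}\le \tfrac{d\mu}{d\mu_1}\le e^{K}\tfrac{Z_1}{Z}$, to bound
\[
\mathrm{Var}_\mu(f)\;\le\;e^{2K}\,\mathrm{Var}_{\mu_1}(f).
\]
Second, apply the Poincaré inequality established in Example \ref{E.g.1},
\[
m\,\mathrm{Var}_{\mu_1}(f)\;\le\;\mu_1|\nabla f|^2,
\]
and then reverse the density comparison in the other direction,
\[
\mu_1|\nabla f|^2 \;\le\; e^{K}\tfrac{Z}{Z_1}\,\mu|\nabla f|^2\cdot\tfrac{Z_1}{Z}\;=\;e^{K}\,\mu|\nabla f|^2 \cdot \tfrac{1}{\,Z_1/Z\,}\cdot \tfrac{Z_1}{Z},
\]
so that, combining the two density bounds, one arrives at
\[
\tilde m\,\mathrm{Var}_\mu(f)\;\le\;\mu|\nabla f|^2,\qquad \tilde m\equiv m\,e^{-2K}.
\]
Finally, since $\mu(f-\mu f)^2=\mathrm{Var}_\mu(f)$, this is precisely the claimed Poincaré inequality.

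The only subtle point is to verify that the cutoff construction produces a genuinely bounded $U_2$: on the support of $\chi_2$ (the ball of radius $R$), $U=r^{\frac{1}{1-\varepsilon}(1+\varepsilon\cos\varphi)}$ is continuous and hence bounded, so $U_2=U\chi_2$ is bounded. One also needs $U_1$ to be smooth enough for the Adams-regular $U$-bound and the lower bound $\mathcal{V}_1\ge m>0$ chosen in Example \ref{E.g.1} to remain valid after multiplication by $\chi_1$; this is ensured by taking $R$ large enough so that the correction terms coming from derivatives of $\chi_1$ (which are bounded and supported in the annulus $R/2\le r\le R$) do not destroy the positivity of $\mathcal{V}_1$ outside a fixed compact set, and inside that compact set the perturbation argument absorbs them. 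Thus the main (and essentially only) obstacle is a careful accounting of the cutoff derivatives; once that is done, Holley--Stroock closes the argument routinely.
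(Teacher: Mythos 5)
Your proposal takes the same route as the paper: cut off $U=U_1+U_2$, get Poincar\'e for $\mu_1$ from the $U$-bound and the lower bound on $\mathcal{V}_1$, and then transfer to $\mu$ by the Holley--Stroock bounded perturbation argument, which you write out explicitly where the paper simply cites ``the bounded perturbation lemma.'' That part is fine, modulo two bookkeeping slips: the density sandwich should read $e^{-K}\le d\mu/d\mu_1\le e^{K}$ with $K=\mathrm{osc}(U_2)$ (the $Z_1/Z$ factors are absorbed, not multiplied, because $e^{\inf U_2}\le Z_1/Z\le e^{\sup U_2}$), and your double-integral route then yields $\tilde m = m\,e^{-3K}$ (the factor $e^{2K}$ from the product measure plus an extra $e^{K}$ from $\mu_1|\nabla f|^2\le e^K\mu|\nabla f|^2$), not $e^{-2K}$; the sharper $\tilde m = m\,e^{-K}$ comes from $\mathrm{Var}_\mu(f)\le\mu(f-\mu_1 f)^2$ rather than the double integral. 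Since only some positive $\tilde m$ is claimed, this does not threaten the result, but the arithmetic as written is wrong.

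The more substantive issue is the resolution you offer for the ``subtle point.'' You correctly notice that the validity of $\mathcal{V}_1\geq m>0$ needs scrutiny, but your proposed fix --- that outside a compact set the cutoff derivatives can be controlled and ``inside that compact set the perturbation argument absorbs them'' --- does not close the gap. With $U_1=U\chi_1$, one has $U_1\equiv 0$ on $B_{R/2}$, hence $\mathcal{V}_1\equiv 0$ there; this cannot be improved by choice of $R$. The Holley--Stroock lemma transfers a Poincar\'e inequality from $\mu_1$ to a boundedly-perturbed measure; it does nothing to repair the hypothesis $\mathcal{V}_1\ge m$, which is what the $U$-bound needs in order to \emph{produce} a Poincar\'e inequality for $\mu_1$ in the first place. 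To make the argument airtight one must either (a) redefine $U_1$ to coincide with $U$ outside $B_R$ and to be any smooth extension inside $B_R$ with $\mathcal{V}_1\ge m$ there (so that $U_2=U-U_1$ is still bounded and compactly supported --- this is the standard reading of what the paper intends), or (b) accept $\mathcal{V}_1\ge m$ only outside a compact set and supplement the $U$-bound with a local Poincar\'e inequality on $B_R$ (using that $e^{-U_1}$ is bounded above and below there) and a gluing argument. Either patch works; the one you sketched does not.
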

\textbf{Nonregular Interactions in Euclidean Space.}
\begin{example} \label{E.g.2}
For $\varepsilon\in(0,1)$ and $\alpha\in(0,\infty)$, let 
\[
U(\mathbf{x})\equiv r^{\alpha}\left(1+\varepsilon\cos (\omega r^{\kappa}) \right)
.
\]
Let
\[
\mathcal{V}\equiv \frac14 |\nabla U|^2 -\frac12 \Delta U.
\]
We have
\[\begin{split}
    \nabla U&=  r^{\alpha-1}\left(\alpha \left(1+\varepsilon\cos (\omega r^{\kappa})\right) -\varepsilon\omega \kappa r^{\kappa}  \sin (\omega r^{\kappa}) \right)\frac{\mathbf{x}}{r} ,
\end{split}
\]
and hence,
 \[\begin{split}
 |\nabla U|^2 &=   r^{2(\alpha-1)}\left(\alpha \left(1+\varepsilon\cos (\omega r^{\kappa}) \right)-\varepsilon\omega \kappa r^{\kappa}  \sin (\omega r^{\kappa}) \right)^2.
 \end{split}\]
For the Laplacian of the interaction, we have
\[\begin{split}
\Delta U &=   
  \nabla\cdot\left(r^{\alpha-1}\left(\alpha \left(1+\varepsilon\cos (\omega r^{\kappa})\right) -\varepsilon\omega \kappa r^{\kappa}  \sin (\omega r^{\kappa}) \right)\frac{\mathbf{x}}{r} \right)\\
  &=   
 r^{\alpha-2}(\alpha-1)\left(\alpha \left(1+\varepsilon\cos (\omega r^{\kappa})\right) -\varepsilon\omega \kappa r^{\kappa}  \sin (\omega r^{\kappa}) \right) \\
 &+ r^{\alpha-1}\left(\alpha \left( -\varepsilon\omega\kappa r^{\kappa-1}\sin (\omega r^{\kappa})\right) -\varepsilon\omega \kappa^2 r^{\kappa-1}  \sin (\omega r^{\kappa})   -\varepsilon\omega^2 \kappa^2 r^{2\kappa-1}  \cos (\omega r^{\kappa})
 \right)  \\
 &+r^{\alpha-1}\left(\alpha \left(1+\varepsilon\cos (\omega r^{\kappa})\right) -\varepsilon\omega \kappa r^{\kappa}  \sin (\omega r^{\kappa}) \right)\frac{n-1}{r}.
\end{split}
 \]   
Suppose that $\nabla U=0$, which is the case when
\[
0<\alpha \left(1+\varepsilon\cos (\omega r^{\kappa})\right) =\varepsilon\omega \kappa r^{\kappa}  \sin (\omega r^{\kappa})  ,
\]
and for large $r,$ that allows for (infinitely many ) solutions close to 
$\omega r^{\kappa}\sim 2n\pi$ with $\cos(\omega r^{\kappa})>0,$ as well as for solutions close to
$\omega r^{\kappa}\sim (2n+1)\pi$ with $\cos(\omega r^{\kappa})<0$.
Then, we have
\[\begin{split}
\Delta U &=     
  r^{\alpha-1}\left(\alpha \left( -\varepsilon\omega\kappa r^{\kappa-1}\sin (\omega r^{\kappa})\right) -\varepsilon\omega \kappa^2 r^{\kappa-1}  \sin (\omega r^{\kappa})   -\varepsilon\omega^2 \kappa^2 r^{2\kappa-1}  \cos (\omega r^{\kappa})
 \right) \\
 &= -r^{\alpha-2}\left( \alpha^2 \left(1+\varepsilon\cos (\omega r^{\kappa})\right)
 +\kappa \alpha\left(1+\varepsilon\cos (\omega r^{\kappa})\right)
+\varepsilon\omega^2 \kappa^2 r^{2\kappa }  \cos (\omega r^{\kappa})
 \right) \\
 &= -r^{\alpha-2}\left( \alpha(\alpha  +\kappa ) +\left(\alpha^2  
 +\kappa \alpha  
+ \omega^2 \kappa^2 r^{2\kappa } \right) \varepsilon \cos (\omega r^{\kappa})
 \right), \\
\end{split}
 \]   
 which can be positive or negative depending on the sign of $\cos(\omega r^\kappa),$ with the modulus growing to infinity.
 \end{example}
 Hence, we get the following result:
 
\begin{thm}
If $\kappa>0$,
 there does not exist 
 a constant  $C\in(0,\infty)$ 
such that the following Adams' regularity condition holds
\[
|\Delta U|\leq C(1+|\nabla U|^2) .
\]
\end{thm}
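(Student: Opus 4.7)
The plan is to exhibit radii $r_n\to\infty$ at which $\nabla U$ vanishes while $|\Delta U(r_n)|$ diverges, thereby ruling out any bound $|\Delta U|\leq C(1+|\nabla U|^2)$: at such critical points the right-hand side reduces to the constant $C$, while the left-hand side grows without bound. All the needed computations are already collected in Example~\ref{E.g.2}, so the task is essentially one of organization and sign-checking.

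First I would extract the critical-point equation. Since $U$ is radial, the condition $\nabla U(r\mathbf{e})=0$ reduces to the scalar identity
\[\alpha\bigl(1+\varepsilon\cos(\omega r^\kappa)\bigr)=\varepsilon\omega\kappa r^\kappa\sin(\omega r^\kappa).\]
The left-hand side remains in $[\alpha(1-\varepsilon),\alpha(1+\varepsilon)]$, whereas the right-hand side has oscillation amplitude $\varepsilon\omega\kappa r^\kappa\to\infty$ as $r\to\infty$. An intermediate value argument over each period of $\omega r^\kappa$ therefore produces, for $r$ sufficiently large, two critical radii per period: one with $\omega r^\kappa$ slightly above $2n\pi$ (so $\cos\approx +1$) and one slightly below $(2n+1)\pi$ (so $\cos\approx -1$). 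Iterating delivers a sequence $r_n\to\infty$ of critical radii.

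Next, at each $r_n$ I would substitute the critical relation into the general formula for $\Delta U$ from Example~\ref{E.g.2}; the sine contributions cancel and one obtains
\[\Delta U(r_n)=-r_n^{\alpha-2}\Bigl[\alpha(\alpha+\kappa)+\bigl(\alpha^2+\kappa\alpha+\omega^2\kappa^2 r_n^{2\kappa}\bigr)\varepsilon\cos(\omega r_n^\kappa)\Bigr].\]
The same relation forces $|\sin(\omega r_n^\kappa)|\leq\alpha(1+\varepsilon)/(\varepsilon\omega\kappa r_n^\kappa)\to 0$, whence $|\cos(\omega r_n^\kappa)|\to 1$. Consequently the $\omega^2\kappa^2 r_n^{2\kappa}\cos(\omega r_n^\kappa)$ term dominates the bracket and one gets $|\Delta U(r_n)|\sim\varepsilon\omega^2\kappa^2 r_n^{\alpha+2\kappa-2}$, which diverges to $+\infty$ as $r_n\to\infty$. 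Combined with $|\nabla U(r_n)|=0$, this contradicts the bound $|\Delta U|\leq C(1+|\nabla U|^2)$ for any fixed $C$, finishing the proof.

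The main obstacle is the bookkeeping needed to keep $|\cos(\omega r_n^\kappa)|$ bounded away from zero along the chosen sequence, so that the dominant $r^{2\kappa}$-term is not silenced by a zero of the cosine; this is delivered precisely by the $O(r_n^{-\kappa})$ decay of $|\sin(\omega r_n^\kappa)|$ at critical points. Every other ingredient is a direct substitution into identities already recorded in Example~\ref{E.g.2}.
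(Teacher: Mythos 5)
Your proof reproduces the paper's argument exactly: both locate a diverging sequence of critical radii $r_n$ at which $\nabla U$ vanishes, substitute the critical-point relation into $\Delta U$ to reach the same simplified bracket $-r^{\alpha-2}\bigl(\alpha(\alpha+\kappa)+(\alpha^2+\kappa\alpha+\omega^2\kappa^2 r^{2\kappa})\varepsilon\cos(\omega r^\kappa)\bigr)$, and use $|\sin(\omega r_n^\kappa)|=O(r_n^{-\kappa})$ (hence $|\cos(\omega r_n^\kappa)|$ bounded away from $0$) to conclude $|\Delta U(r_n)|\to\infty$ while the right side of the Adams bound equals the constant $C$, a detail you make slightly more explicit than the paper's remark that ``the modulus grows to infinity.'' One caveat shared by your write-up and the paper: the claimed divergence $|\Delta U(r_n)|\sim r_n^{\alpha+2\kappa-2}$ actually requires $\alpha+2\kappa>2$, so the hypothesis $\kappa>0$ alone does not suffice for every $\alpha\in(0,\infty)$ and deserves to be flagged.
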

Then also Poincar\'e inequality does not hold, \cite{HZ}.

\vspace{0.55cm}

\textbf{Nilpotent Lie Group Case}\\
With a homogeneous norm $N$, let 
\[
U(\mathbf{x})\equiv N^{\alpha}\left(1+\varepsilon\cos (\omega N^{\kappa}) \right).
\]
Then
\[\begin{split}
    \nabla U&=  N^{\alpha-1}\left(
    \alpha \left(1+\varepsilon\cos (\omega N^{\kappa})\right) -\varepsilon\omega \kappa N^{\kappa}  \sin (\omega N^{\kappa}) \right)\nabla N, 
\end{split}
\]
and hence
 \[\begin{split}
 |\nabla U|^2 &=   N^{2(\alpha-1)}\left(\alpha \left(1+\varepsilon\cos (\omega N^{\kappa}) \right)-\varepsilon\omega \kappa N^{\kappa}  \sin (\omega N^{\kappa}) \right)^2 |\nabla N|^2.
 \end{split}\]
 As follows from the Euclidean case, there is an infinite unbounded set $g_k\in\mathbb{G},$ where the expression in the bracket vanishes. On top of that,
 for smooth $N$, there are additional points where $\nabla N=0$. \\

For example, in the case of the Heisenberg group, the additional factor we pickup  is $|\nabla N|^2 =\frac{|\mathbf{x}|^2}{N^2}$, compared to the Euclidean case when it is equal to one.\\
 For the sub-Laplacian, we have
 \[\begin{split}
    \Delta U&=  N^{\alpha-2}(\alpha-1)\left( \alpha \left(1+\varepsilon\cos (\omega N^{\kappa})\right)  -\varepsilon\omega \kappa N^{\kappa}  \sin (\omega N^{\kappa})  
    \right) |\nabla N|^2 \\
    &+ N^{\alpha-1} \left(   {\alpha} \left(-\varepsilon\omega\kappa N^{\kappa-1}\sin (\omega N^{\kappa})  -\varepsilon\omega  \kappa^2 N^{\kappa-1}  \sin (\omega N^{\kappa}) -\varepsilon\omega^2 \kappa^2 N^{2\kappa-1}  \cos(\omega N^{\kappa})  \right)
    \right)|\nabla N|^2 
    \\
    &+N^{\alpha-1}\left(\alpha \left(1+\varepsilon\cos (\omega N^{\kappa}) \right)-\varepsilon\omega \kappa N^{\kappa}  \sin (\omega N^{\kappa}) \right)\Delta N. 
\end{split}
\]
At the points $g_k\in\mathbb{G}$ where $\nabla U(g_k)=0$, but possibly $\nabla N\neq 0$, we have
\[\begin{split}
    \Delta U(g_k)&=   
    -\varepsilon\kappa\omega N^{\alpha+\kappa-2} \left(   (\alpha+    \kappa )   \sin (\omega N^{\kappa})    + \omega  \kappa  N^{\kappa}  \cos(\omega N^{\kappa})  \right)
    |\nabla N|^2 ,
\end{split}
\]
with two subsequences diverging to $\pm\infty$, respectively.\\
In case of the Folland-Kor\'anyi (Kaplan) norm $N$, we have 
\[
\Delta N = \frac{(Q - 1)}{N}|\nabla N |^2.
\]
In this case, we have
\[\begin{split}
    \Delta U&=   N^{\alpha-2}|\nabla N|^2 (\alpha-1)\left(\alpha \left(1+\varepsilon\cos (\omega N^{\kappa})\right) -\varepsilon\omega \kappa N^{\kappa}  \sin (\omega N^{\kappa})  
    \right) \\
    &+   N^{\alpha-2}|\nabla N|^2 \left(\alpha  \left(-\varepsilon\omega\kappa N^{\kappa }\sin (\omega N^{\kappa})\right) -\varepsilon\omega  \kappa^2 N^{\kappa }  \sin (\omega N^{\kappa}) -\varepsilon\omega^2 \kappa^2 N^{2\kappa }  \cos(\omega N^{\kappa})   
    \right)
    \\
    &+N^{\alpha-2}|\nabla N |^2\left(\alpha \left(1+\varepsilon\cos (\omega N^{\kappa})\right) -\varepsilon\omega \kappa N^{\kappa}  \sin (\omega N^{\kappa}) \right) (Q - 1) ,\\
\end{split}
\]
i.e.
\[\begin{split}
    \Delta U&=   
   N^{\alpha-2}|\nabla N |^2(\alpha^2 (1+\varepsilon\cos (\omega N^{\kappa}))  -N^{\alpha-2}|\nabla N |^2\varepsilon\omega^2 \kappa^2 N^{2\kappa }  \cos(\omega N^{\kappa})\\
    &-  N^{\alpha-2}|\nabla N |^2\varepsilon\omega \kappa (\alpha+\kappa+Q)N^{\kappa}  \sin (\omega N^{\kappa})).
\end{split}
\]
Hence,
\[\begin{split}
 \frac14|\nabla U|^2 -\frac12\Delta U &=   \frac14N^{2(\alpha-1)} |\nabla N|^2 
 \left(\alpha \left(1+\varepsilon\cos (\omega N^{\kappa}) \right)-\varepsilon\omega \kappa N^{\kappa}  \sin (\omega N^{\kappa}) \right)^2 \\
 &-\frac12 N^{\alpha-2}|\nabla N |^2 \left(\alpha^2 \left(1+\varepsilon\cos (\omega N^{\kappa})\right)  %
 - \varepsilon\omega^2 \kappa^2 N^{2\kappa }  \cos(\omega N^{\kappa})\right) \\
    &-  \frac12 N^{\alpha-2}|\nabla N |^2\varepsilon\omega \kappa (\alpha+\kappa+Q)N^{\kappa}  \sin (\omega N^{\kappa}) . 
 \end{split}\]
 This also vanishes everywhere where $\nabla N=0$.
 As we showed in Section \ref{sec6} in the case of nilpotent Lie groups of second order, Adams' regularity condition fails because mixed derivatives of $N$ of second order cannot be globally bounded by $|\nabla N|^{2}$.\\
 Finally, we remark that even bounded perturbations of polynomial interaction may break down the Adams' regularity condition. We note, however, that bounded perturbations of Adams' regular interaction can be handled separately. For example,
 consider
 \[
 U(d)\equiv d^p + \cos(\omega d^\kappa),
 \]
 with some good distance function $d$ for which gradient does not vanish and the second derivatives behave well outside some ball.
 Then, for $\kappa = p$, the sets of zeros of $\nabla U$ is unbounded, infinite, and there exists two sequences  of points for which the corresponding values of $\Delta U$ diverge to $\pm\infty$, respectively. \\

\begin{example} \label{E.g.3}

Let $d\mu\equiv \frac1{Z}e^{-\mathcal{U}}d\lambda$ where 
\[\mathcal{U}=\frac1{2n}\left(x^2+y^2+2z^2\right)^n\equiv \frac1{2n} r^{2n} \]
and $Z$ is a normalisation constant.
Then we have
\[\begin{split}
    X\mathcal{U}&= r^{2(n-1)}(x+yz),\\    Y\mathcal{U}&= r^{2(n-1)}(y-xz)
\end{split}\]

\[|\nabla   \mathcal{U}|^2 
= r^{4(n-1)}\left(|x+yz|^2 +|y-xz|^2\right)=
 r^{4(n-1)}|\mathbf{x}|^2 \left(1+z^2\right)
 \]
and
\[\begin{split}
    X^2\mathcal{U}&= X(r^{2(n-1)}(x+yz))=
    (n-1)r^{2(n-2)}2(x+yz)^2+ r^{2(n-1)}\left(1+\frac12 y^2\right),
    \\   
     YX \mathcal{U}&=Y(r^{2(n-1)}(x+yz))=
    (n-1)r^{2(n-2)}2(x+yz)(y-xz)+ r^{2(n-1)}\left(z-\frac12 xy\right),
    \\
    Y^2\mathcal{U}&= Y(r^{2(n-1)}(y-xz))=
   (n-1) r^{2(n-2)}2(y-xz)^2 +r^{2(n-1)}\left(1+\frac12 x^2\right)
   \\
    XY \mathcal{U}&= X(r^{2(n-1)}(y-xz))=
   (n-1) r^{2(n-2)}2(x+yz)(y-xz)  +r^{2(n-1)}\left( -z-\frac12xy\right)
\end{split}\]
Hence
\[\Delta   \mathcal{U}= 2(n-1) r^{2(n-2)}  |\mathbf{x}|^2 (1 +z^2)   +r^{2(n-1)}\left(2+\frac12  |\mathbf{x}|^2\right), \]
and
\[\begin{split}
|YX \mathcal{U}|^2+ |XY \mathcal{U}|^2&=
8(n-1)^2 r^{4(n-2)} (x+yz)^2(y-xz)^2  \\
&-4(n-1) r^{2(2n-3)} (x+yz)(y-xz)  xy 
\\
&+  r^{4(n-1)}\left(z-\frac12xy\right)^2 +  r^{4(n-1)}\left(z+\frac12xy\right)^2
\end{split}\]
In general, the last formula cannot be dominated by the square of the gradient for small 
$|\mathbf{x}|^2,$ and hence, Adams' regularity condition does not hold.\\
Let
\[ \begin{split}\mathcal{V}&\equiv
\frac14 |\nabla   \mathcal{U}|^2 - \frac12 \Delta   \mathcal{U} \\
&= \frac14  r^{4(n-1)}|\mathbf{x}|^2 \left(1+z^2\right) 
-\frac12 \left(2(n-1) r^{2(n-2)}  |\mathbf{x}|^2 (1 +z^2)   +r^{2(n-1)}\left(2+\frac12  |\mathbf{x}|^2\right)\right),
\end{split} \]
and we have
\[ \tag{I}
\mu \mathcal{V}f^2\leq \mu|\nabla  f|^2.
\]

On the other hand, for Hardy's inequality, we have
\[
\int \frac{1}{|\mathbf{x}|^2} f^2e^{-\mathcal{U}} d\lambda   \leq \frac{C}{(\tilde{n}-2)^2}  \int |\nabla   f -f\nabla   \mathcal{U}|^2 e^{-\mathcal{U}} d\lambda   ,
\]
and hence
\[
\mu \frac{1}{|\mathbf{x}|^2} f^2\leq 
\frac{2C}{(\tilde{n}-2)^2}\mu|\nabla   f|^2 + \frac{2C}{(\tilde{n}-2)^2}\mu\left(f^2|\nabla   \mathcal{U}|^2\right).\]
Thus, in our example, we obtain
\[\tag{II} 
\mu \frac{1}{|\mathbf{x}|^2} f^2\leq 
\frac{2C}{(\tilde{n}-2)^2}\mu|\nabla   f|^2 + 
\frac{2C}{(\tilde{n}-2)^2}\mu\left(f^2 r^{4(n-1)}|\mathbf{x}|^2 \left(1+z^2\right)\right).
\]
Combining (I) and (II), with some $A^2   \in(0,\infty)$ to be chosen later, we obtain
\[ \tag{III} \begin{split}
\mu f^2\left( \mathcal{V} f{x}^2  
+\frac{A^2}{|\mathbf{x}|^2} \right) &\leq 
\frac{2CA^2}{(\tilde{n}-2)^2}\mu|\nabla   f|^2 \\
&\qquad + 
\frac{2CA^2}{(\tilde{n}-2)^2}\mu\left(f^2 r^{4(n-1)}|\mathbf{x}|^2(1+z^2)\right).
\end{split}\]
After rearrangement, this yields
\[\tag{IV} \begin{split}
&
\mu f^2\left( 
\left(\left(\frac14  -\frac{2CA^2}{(\tilde{n}-2)^2}\right)
r^{4(n-1)}  \left(1+z^2\right) 
-  \left( (n-1) r^{2(n-2)}   (1 +z^2)   +\frac1{4}r^{2(n-1)}  \right)\right) |\mathbf{x}|^2
+\frac{A^2}{|\mathbf{x}|^2} 
\right)  
-\mu f^2 r^{2(n-1)}
\\
&\leq 
\frac{2CA^2}{(\tilde{n}-2)^2}\mu|\nabla f|^2 .
\end{split}
\]
If $n$ is large enough, so that
\[
\frac14  - \frac{2CA^2}{(\tilde{n}-2)^2} >\frac1{8},
\]
then  there exists a constant $B\in(0,\infty)$  such that we have
\[
\mathcal{W}\equiv  
\left(\left(\frac14  -\frac{2CA^2}{(\tilde{n}-2)^2}\right)
r^{4(n-1)}  \left(1+z^2\right) 
-  \left( (n-1) r^{2(n-2)}   (1 +z^2)   +\frac1{4}r^{2(n-1)}  \right)\right) + B > \frac1{16} r^{4(n-1)}  \left(1+z^2\right)  ,
\]
and hence
\[\tag{V}  
\mu f^2\left( 
\mathcal{W}  |\mathbf{x}|^2
+\frac{A^2}{|\mathbf{x}|^2} 
- (1+B)\mu f^2 r^{2(n-1)} \right)
\leq 
\frac{2CA^2}{(\tilde{n}-2)^2}\mu|\nabla  f|^2 
.
\]
At this point, we note that the following inequality holds
\[
\mathcal{W}|\mathbf{x}|^2 + 
\frac{A^2}{|\mathbf{x}|^2} \geq 
\frac1{16} r^{4(n-1)} |\mathbf{x}|^2(1+z^2) + \frac{A  }{|\mathbf{x}|^2} \geq
\frac{A}2 r^{2(n-1)} (1+z^2)^{\frac12}.
\]
This, together with (IV),  yields
\[\tag{$\ast$}
 \mu\left( f^2 \left(\frac{A}{2}r^{2(n-1)} (1+z^2)^{\frac12} - (1+B) r^{2(n-1)}\right)\right) \leq  %
\frac{4A^2}{A-1} \frac{C}{(\tilde{n}-2)^2} 
\mu|\nabla f|^2 .
\]
Since there exists a constant $D\in(0,\infty)$ such that 
\[\frac{A}{4}r^{2(n-1)} (1+z^2)^{\frac12} - (1+B) r^{2(n-1)} +D\geq 0\]
the following result is true.
\end{example}
\begin{thm} 
Let $d\mu\equiv \frac1{Z}e^{-\mathcal{U}}d\lambda$ where 
\[\mathcal{U}=\frac1{2n}\left(x^2+y^2+2z^2\right)^n\equiv \frac1{2n} r^{2n} \]
and $Z$ is a normalisation constant.\\
Suppose Hardy's inequality for the Haar measure and subgradient holds with the constant $\frac{C}{(n-2)^2}$ and for a given constant $A\in(0,\infty)$
\[
\frac14  - \frac{2CA^2}{(\tilde{n}-2)^2} >\frac1{8},
\]
then there exists a constant $D\in(0,\infty)$ such that
\[\tag{$\star$}
 \mu\left( f^2 \left(\frac{A}{4}r^{2(n-1)} (1+z^2)^{\frac12}  \right)\right) \leq  %
\frac{4A^2}{A-1} \frac{C}{(\tilde{n}-2)^2} 
\mu|\nabla f|^2 + D \mu  f^2.
\]
\end{thm}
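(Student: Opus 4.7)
The plan is to combine the elementary $U$-bound $\mu(\mathcal{V} f^2)\leq \mu|\nabla f|^2$ with Hardy's inequality transferred from the Haar measure to $\mu$, tuning the combination so that the growing term $|\nabla \mathcal{U}|^2$ appearing on the right-hand side of Hardy can be absorbed by the leading positive part of the potential $\mathcal{V}=\tfrac14|\nabla\mathcal{U}|^2-\tfrac12\Delta\mathcal{U}$ on the left-hand side of the $U$-bound. The output will then be converted into the desired weighted inequality by a pointwise AM-GM trick.

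First, I would compute $|\nabla\mathcal{U}|^2$ and $\Delta\mathcal{U}$ explicitly for $\mathcal{U}=\tfrac{1}{2n}r^{2n}$ on $\mathbb{H}_1$. Using $X\mathcal{U}=r^{2(n-1)}(x+yz)$ and $Y\mathcal{U}=r^{2(n-1)}(y-xz)$, one obtains $|\nabla\mathcal{U}|^2=r^{4(n-1)}|\mathbf{x}|^2(1+z^2)$, and the sub-Laplacian computation yields $\Delta\mathcal{U}=2(n-1)r^{2(n-2)}|\mathbf{x}|^2(1+z^2)+r^{2(n-1)}(2+\tfrac12|\mathbf{x}|^2)$. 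Substituting $fe^{-\mathcal{U}/2}$ into the Hardy inequality for Haar measure gives, with the hypothesised constant $C/(\tilde n-2)^2$, the bound
\[
\mu\!\left(\tfrac{f^2}{|\mathbf{x}|^2}\right)\leq \tfrac{2C}{(\tilde n-2)^2}\,\mu|\nabla f|^2+\tfrac{2C}{(\tilde n-2)^2}\,\mu\!\left(f^2 r^{4(n-1)}|\mathbf{x}|^2(1+z^2)\right),
\]
while the $U$-bound supplies $\mu(\mathcal{V} f^2)\leq \mu|\nabla f|^2$ with $\mathcal{V}$ dominated by $\tfrac14 r^{4(n-1)}|\mathbf{x}|^2(1+z^2)$ minus lower-order corrections.

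Next, I would multiply Hardy by $A^2$ and add it to the $U$-bound. Grouping coefficients of $r^{4(n-1)}|\mathbf{x}|^2(1+z^2)$, the net leading constant becomes $\tfrac14-\tfrac{2CA^2}{(\tilde n-2)^2}$, which by hypothesis exceeds $\tfrac18$. Hence there is a bounded constant $B\in(0,\infty)$ so that the function
\[
\mathcal{W}\equiv\Big(\tfrac14-\tfrac{2CA^2}{(\tilde n-2)^2}\Big)r^{4(n-1)}(1+z^2)-(n-1)r^{2(n-2)}(1+z^2)-\tfrac14 r^{2(n-1)}+B
\]
satisfies $\mathcal{W}\geq \tfrac{1}{16}r^{4(n-1)}(1+z^2)$ globally. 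Rearranging the combined inequality yields
\[
\mu\!\left(f^2\Big(\mathcal{W}|\mathbf{x}|^2+\tfrac{A^2}{|\mathbf{x}|^2}\Big)\right)\leq \tfrac{2CA^2}{(\tilde n-2)^2}\mu|\nabla f|^2+(1+B)\mu\!\left(f^2 r^{2(n-1)}\right).
\]

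Finally, I would invoke the pointwise AM-GM estimate
\[
\mathcal{W}|\mathbf{x}|^2+\tfrac{A^2}{|\mathbf{x}|^2}\geq 2A\sqrt{\mathcal{W}}\geq \tfrac{A}{2}r^{2(n-1)}(1+z^2)^{1/2},
\]
which extracts the desired weight on the left. Splitting $\tfrac{A}{2}r^{2(n-1)}(1+z^2)^{1/2}=\tfrac{A}{4}r^{2(n-1)}(1+z^2)^{1/2}+\tfrac{A}{4}r^{2(n-1)}(1+z^2)^{1/2}$ and using that $\tfrac{A}{4}r^{2(n-1)}(1+z^2)^{1/2}-(1+B)r^{2(n-1)}$ is bounded below by $-D'$ for some $D'\in(0,\infty)$ (since $(1+z^2)^{1/2}\to\infty$ away from a bounded set, and the defect is a bounded multiple of $r^{2(n-1)}$ on that bounded set), we can absorb the remaining $(1+B)\mu(f^2 r^{2(n-1)})$ term into $D\mu f^2$. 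The main obstacle is bookkeeping: keeping the coefficient structure consistent so that the parameter $A$ controls both the Hardy absorption and the AM-GM extraction, which is precisely what the hypothesis $\tfrac14-\tfrac{2CA^2}{(\tilde n-2)^2}>\tfrac18$ is engineered to guarantee.
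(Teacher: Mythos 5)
Your proposal is correct and reproduces the paper's own argument essentially step by step: transfer Hardy's inequality to the measure $\mu$ by substituting $fe^{-\mathcal{U}/2}$, add it (scaled by $A^2$) to the $U$-bound $\mu(\mathcal{V}f^2)\leq\mu|\nabla f|^2$, absorb the Hardy remainder $r^{4(n-1)}|\mathbf{x}|^2(1+z^2)$ into the quartic part of $\mathcal{V}$ using the smallness hypothesis $\tfrac14-\tfrac{2CA^2}{(\tilde n-2)^2}>\tfrac18$ to produce $\mathcal{W}\geq\tfrac1{16}r^{4(n-1)}(1+z^2)$, and then apply the AM--GM bound $\mathcal{W}|\mathbf{x}|^2+\tfrac{A^2}{|\mathbf{x}|^2}\geq 2A\sqrt{\mathcal{W}}\geq\tfrac{A}{2}r^{2(n-1)}(1+z^2)^{1/2}$ to extract the desired weight. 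Splitting that weight and absorbing the leftover $r^{2(n-1)}$ term into $D\mu f^2$ matches the paper's final step.
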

\vspace{0.5cm}

\section{Harmonic Polynomials for Nilpotent Lie Algebras}

Consider the Heisenberg group $\mathbb{H}_{1},$ with the generators
$X=\partial_{x}+\frac{1}{2}y\partial_{z},$ and $Y=\partial_{y}-\frac{1}{2}x\partial_{z}.$
Let $\triangle_{\mathcal{L}}\equiv X^2+Y^2$ be the corresponding sub-Laplacian.
$W_{\pm}\equiv4z\pm2xy$ are said to be harmonic polynomials since
we have:

\[\begin{split}
X(4z-2xy)&=0,\:\:\:\:\:\:\:\:\:\:\:\:\:\:\:\;\:\:Y(4z+2xy)=0,
\\
X(4z+2xy)&=4y,\:\:\:\:\:\:\:\:\:\:\:\:\:\:\:\;\:\:Y(4z-2xy)=-4x 
\end{split}\]
and hence,
\[
X^{2}W_{\pm}=0,\:\:\:\:\:\:\:\:\:\:\:\:\:\:\:\;\:\:Y^{2}W_{\pm}=0,
\]

so $\Delta_{\mathcal{L}}W_{\pm}=0.$
\\
\begin{rem} We remark that we have encountered the functions $W_\pm$ before. Namely, in Example \ref{E.g.3}  with $n=1$ we have
\[\mathcal{U}=\frac1{2}\left(x^2+y^2+2z^2\right)   \]
Hence
\[X\mathcal{U} =x+yz, \qquad Y\mathcal{U} =y-xz\]
and hence
\[YX\mathcal{U} =Y(x+yz)=z-\frac12xy, \qquad XY\mathcal{U} =X(y-xz)=-(z+\frac12xy).\]
\end{rem}
Another harmonic polynomial is the following: $V\equiv x^{2}-y^{2}.$ In fact, $X^{2}V=2,$ and $Y^{2}V=-2,$ so $\Delta_{\mathbb{\mathcal{L}}}V=(X^{2}+Y^{2})V=0.$ Using these polynomials, we define the following homogeneous norm $N^4\equiv W_{+}^{2}+W_{-}^{2}+2V^{2}.$
Computing, we observe the following:

\noindent 
\[\begin{split}
N^4&=W_{+}^{2}+W_{-}^{2}+2V^{2}
\\
&=(4z+2xy)^{2}+(4z-2xy)^{2}+2(x^{2}-y^{2})^{2}
\\
&=32z^{2}+8x^{2}y^{2}+2x^{4}+2y^{4}-4x^{2}y^{2}
\\
&=2(16z^{2}+x^{4}+y^{4}-2x^{2}y^{2})
\\
&=2((x^{2}+y^{2})^{2}+16z^{2})
\\
&=2N_{kaplan}^{4}.
\end{split}\]

Now, consider the Heisenberg group $\mathbb{H}_{n},$ with the generators
$X_{i}=\partial_{x_{i}}+\frac{(-1)^{i}}{2}x_{2n-i+1}\partial_{z},$
for $i=1,...,2n.$

$W_{\pm}\equiv4z\pm2\sum_{j=1}^{n}x_{j}x_{2n-j+1}$ are said to be
harmonic polynomials since we have:

\[
X_{i}W_{+}=2(-1)^{i}x_{2n-i+1}+2x_{2n-i+1},\,\,\,\,\,\,\,\,\,\,\,\,\,\,\,\,\,\,\,\,\,\,\,\,\,\,\,\,\,\,\,\,X_{i}W_{-}=2(-1)^{i}x_{2n-i+1}-2x_{2n-i+1}
\]

Hence,
\[
X_{i}^{2}W_{\pm}=0
\]

so $\Delta_{\mathbb{\mathcal{L}}}W_{\pm}=0.$

Other harmonic polynomials are the following: $V_{j,k}\equiv(x_{j}-x_{2n-j+1})(x_{k}+x_{2n-k+1}).$

If $j\neq k,$ then, $X_{j}^{2}V_{j,k}=0.$ 

If $j=k,$ then $X_{j}^{2}V_{j}=2,$ and $X_{2n-j+1}^{2}V_{j}=-2.$
Hence, $\Delta_{\mathbb{\mathcal{L}}}V_{j,k}=0.$

Let $V\equiv\sum_{j=1}^{n}\sum_{k=1}^{n}V_{j,k}^{2}.$

Using these polynomials, we define the following homogeneous norm $N=W_{+}^{2}+W_{-}^{2}+2V^{2}.$
Computing, we observe the following:

\begin{singlespace}
\noindent 
\[\begin{split}
N^4&=W_{+}^{2}+W_{-}^{2}+2V^{2}
\\
&=32z^{2}+8(\sum_{j=1}^{n}x_{j}x_{2n-j+1})^{2}+2\sum_{j=1}^{n}\sum_{k=1}^{n}\left[(x_{j}-x_{2n-j+1})(x_{k}+x_{2n-k+1})\right]^{2}
\\
&=2\left(16z^{2}+4(\sum_{j=1}^{n}x_{j}x_{2n-j+1})^{2}+\left(\sum_{j=1}^{n}(x_{j}-x_{2n-j+1})^{2}\right)\left(\sum_{j=1}^{n}(x_{j}+x_{2n-j+1})^{2}\right)\right)
\\
&=2\left(16z^{2}+4(\sum_{j=1}^{n}x_{j}x_{2n-j+1})^{2}+\left(\sum_{j=1}^{n}x_{j}^{2}+x_{2n-j+1}^{2}-2x_{j}x_{2n-j+1}\right)\left(\sum_{j=1}^{n}x_{j}^{2}+x_{2n-j+1}^{2}+2x_{j}x_{2n-j+1}\right)\right)
\\
&=2\left(16z^{2}+4(\sum_{j=1}^{n}x_{j}x_{2n-j+1})^{2}+\left(\sum_{j=1}^{2n}x_{j}^{2}-2\sum_{j=1}^{n}x_{j}x_{2n-j+1}\right)\left(\sum_{j=1}^{2n}x_{j}^{2}+2\sum_{j=1}^{n}x_{j}x_{2n-j+1}\right)\right)
\\
&=2\left(16z^{2}+\left(\sum_{j=1}^{2n}x_{j}^{2}\right)^{2}\right)
\\
&=2N_{kaplan}^{4}
\end{split}\]

\end{singlespace}

\begin{thm}
In the Heisenberg group $\mathbb{H}^n,$ the Kaplan norm  $N_{Kaplan}$ satisfies
 \[N^4_{Kaplan} =\frac12 \left( W_{+}^{2}+W_{-}^{2}+2V^{2}  \right). \] 
\end{thm}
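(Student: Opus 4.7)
The plan is to prove the identity by direct algebraic expansion of the right-hand side, exploiting the product-of-sums structure of the expression denoted $V^2$ (interpreted, as in the preceding computation, as $\sum_{j,k}V_{j,k}^{2}$).

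First, I would abbreviate $S \equiv \sum_{j=1}^{n} x_{j}x_{2n-j+1}$, so that $W_{\pm}=4z\pm 2S$. Squaring and adding gives
\[
W_{+}^{2}+W_{-}^{2} = (4z+2S)^{2}+(4z-2S)^{2} = 32z^{2}+8S^{2},
\]
since the cross terms $\pm 16zS$ cancel.

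Second, the key algebraic observation is that $V_{j,k} = (x_{j}-x_{2n-j+1})(x_{k}+x_{2n-k+1})$ factors into a $j$-dependent and a $k$-dependent piece, so the double sum separates:
\[
\sum_{j=1}^{n}\sum_{k=1}^{n} V_{j,k}^{2} \;=\; \Bigl(\sum_{j=1}^{n}(x_{j}-x_{2n-j+1})^{2}\Bigr)\Bigl(\sum_{k=1}^{n}(x_{k}+x_{2n-k+1})^{2}\Bigr).
\]
Since the involution $j\mapsto 2n-j+1$ pairs each index in $\{1,\dots,2n\}$ exactly once, $\sum_{j=1}^{n}(x_{j}^{2}+x_{2n-j+1}^{2}) = |x|^{2}$. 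Each of the two factors therefore collapses to $|x|^{2}\mp 2S$, and their product is $|x|^{4}-4S^{2}$. Hence $2V^{2} = 2|x|^{4}-8S^{2}$.

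Combining the two computations, the $S^{2}$ contributions cancel and one finds
\[
W_{+}^{2}+W_{-}^{2}+2V^{2} \;=\; 32z^{2}+8S^{2}+2|x|^{4}-8S^{2} \;=\; 2\bigl(|x|^{4}+16z^{2}\bigr) \;=\; 2 N_{Kaplan}^{4},
\]
which gives the theorem after dividing by $2$. There is no genuine obstacle: the argument is purely algebraic, and the only delicate point is the bookkeeping of the involution $j\mapsto 2n-j+1$ needed to recognise $|x|^{2}$ inside each factor, together with the exact cancellation of the $8S^{2}$ coming from $W_{+}^{2}+W_{-}^{2}$ against the $-8S^{2}$ produced by the factorisation of $2V^{2}$.
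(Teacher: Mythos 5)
Your proof is correct and takes essentially the same route as the paper: you compute $W_{+}^{2}+W_{-}^{2}=32z^{2}+8S^{2}$, factor the double sum $\sum_{j,k}V_{j,k}^{2}$ into $\bigl(\sum_{j}(x_{j}-x_{2n-j+1})^{2}\bigr)\bigl(\sum_{k}(x_{k}+x_{2n-k+1})^{2}\bigr)=(|x|^{2}-2S)(|x|^{2}+2S)=|x|^{4}-4S^{2}$, and observe the cancellation of the $8S^{2}$ terms, exactly as in the paper's displayed chain of equalities. The only cosmetic difference is your introduction of the abbreviation $S$ and your explicit reading of the paper's $V^{2}$ as $\sum_{j,k}V_{j,k}^{2}$ (which is what the paper actually uses in its computation, despite the slightly inconsistent phrasing in the definition of $V$).
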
 

\vspace{2cm}
\noindent{\bf Acknowledgements:} E.B.D. is funded by the European Union's Horizon 2020 research and innovation programme under the Marie Sk\l odowska-Curie grant agreement n\textsuperscript{o}101034255.\\[-2pt]

\vspace{-0.5500cm}

\end{document}